%%%%%%%%%%%%%%%%%%%%%%%%%%%%%%%%
%%%%% eingereichte Version 22.August 2013 - Arxiv
%%%%%%%%%%%%%%%%%%%%%%%%%%%%%%%%%%%
\documentclass{article}
\usepackage{amsmath,amssymb,amsthm,scrtime,bbm,verbatim,mathrsfs}
\usepackage{graphics}
\usepackage{amsfonts,xcolor}
\usepackage{tikz}
\usepackage{bm}
\usepackage{csquotes}
\MakeOuterQuote{"}

%\usepackage{showlabels}
%\usepackage{showkeys-hara}

%Standard macros
\newcommand{\E}{\mathbb{E}}
\renewcommand{\P}{\mathbb{P}}
\newcommand{\N}{\mathbb{N}}
\newcommand{\Z}{\mathcal{Z}}

\newcommand{\eps}{\epsilon}
\newcommand{\sss}{\scriptscriptstyle}
\newcommand{\F}{\mathcal{F}}
%\newcommand{\ssup}[1] {{\scriptscriptstyle{({#1}})}}

%Macros graphs
\newcommand{\eset}{{\sf{E}}}
\newcommand{\vset}{{\sf{V}}}
\newcommand{\graph}{{\sf{G}}}
\newcommand{\connect}{{\sf{C}}}
\newcommand{\eseteps}{{\sf{E}}^{\eps}}
\newcommand{\vseteps}{{\sf{V}}^{\eps}}
\newcommand{\grapheps}{{\sf{G}}^{\eps}}

\newcommand{\PA}{{\rm{PA}}}
\newcommand{\CL}{{\rm{CL}}}

\newcommand{\CM}{{\rm{CM}}}
\newcommand{\path}{{\sf{p}}}
\newcommand{\dist}{{\sf{d}}}
%\newcommand{\C}{\mathcal{C}}

%special vulnerability macros
\newcommand{\pc}{p_{\rm c}}
\newcommand{\loc}{\lambda} %location		
\newcommand{\mar}{\alpha} %mark
\newcommand{\IBP}{{\mathrm{IBP}}} %name branching process
\newcommand{\fst}{\lfloor \eps n\rfloor} %first vertex in damages graph
\newcommand{\rightp}{{\rm{r}}} %mark: r=parent on right
\newcommand{\leftp}{\ell} %mark: l=parent on left
\newcommand{\maxdeg}{{\sf{M}}} % maximal indegree
\newcommand{\typespace}{\Phi} % typespace
\newcommand{\type}{\phi} % type
\newcommand{\Asimple}{\bar{A}} %simplified version of A
\newcommand{\p}{\mathfrak{p}} %typical element of point process
\newcommand{\q}{\mathfrak{q}} %typical element of point process
\newcommand{\tree}{\mathbb{T}^{\eps}} %tree = intermediate step in coupling
\newcommand{\Ftree}{\mathbb{T}} %tree without the epsilon
\newcommand{\stoptime}{T} %stopping time in Coupling
\newcommand{\threshold}{\hat{n}} %step from which f(0)>1 doesn't matter anymore
\newcommand{\pathset}{\mathcal{S}} % set of paths in Distances Section
\newcommand{\eigenfct}{\varphi} % eigenfunction of spectral radius
\newcommand{\inMC}{\mathcal{Y}} %inhomogeneous Markov chain, component

%Matrix notation

\newcommand{\heap}[2]  {\genfrac{}{}{0pt}{}{#1}{#2}}
\newcommand{\sfrac}[2] {\mbox{$\frac{#1}{#2}$}}

\pagestyle{plain}
\setlength{\footskip}{36pt}
\setlength{\textwidth}{16cm}
\setlength{\oddsidemargin}{0.5cm}
\setlength{\evensidemargin}{0.5cm}
\setlength{\topmargin}{-1.5cm}
\setlength{\textheight}{23cm}

\numberwithin{equation}{section}

\newtheorem{theorem}{Theorem}[section]
\newtheorem{lemma}[theorem]{Lemma}

\newtheorem{corollary}[theorem]{Corollary}
\newtheorem{proposition}[theorem]{Proposition}

\theoremstyle{definition}
\newtheorem{definition}[theorem]{Definition}

\title{Vulnerability of robust preferential attachment networks}
\author{Maren Eckhoff\thanks{Department of Mathematical Sciences, University of Bath, Bath, BA2 7AY, United Kingdom.\newline Email: {\tt m.eckhoff@bath.ac.uk}, {\tt maspm@bath.ac.uk}}
\and
Peter M\"orters$^*$}
\date{}

\begin{document}
\maketitle

\begin{abstract}
\noindent
Scale-free networks with small power law exponent are known to be robust, meaning that their qualitative topological structure
cannot be altered by random removal of even a large proportion of nodes. 
By contrast, it has been argued in the science literature that such networks are highly vulnerable to a targeted attack, and removing a small 
number of key nodes in the network will dramatically change the topological structure.  Here we analyse a class of preferential attachment networks 
in the robust regime and prove four main results supporting this claim: After removal of an arbitrarily small proportion $\eps>0$ of the oldest 
nodes (1) the asymptotic degree distribution has exponential instead of power law tails; (2) the largest degree in the network drops from
being of the order of a power of the network size~$n$ to being just logarithmic in~$n$; (3) the typical distances in the network increase from order $\log\log n$ to
order $\log n$; and (4) the network becomes vulnerable to random removal of nodes. Importantly, all our results explicitly quantify the dependence on the 
proportion $\eps$ of removed vertices. For example, we show that the critical proportion of nodes that have to be retained
for survival of the giant component undergoes a steep increase as $\eps$ moves away from zero, and a comparison of this result with similar ones
for other networks reveals the existence of two different universality classes of robust network models. The key technique in our proofs is a local approximation of the network 
by a branching random walk with two killing boundaries, and an understanding of the  particle genealogies in this process, which enters into estimates 
for the spectral radius of an associated~operator.
\end{abstract}

\bigskip

\noindent
{\bf{Key words:}} Power law, small world,  scale-free network, preferential attachment, Barab\'asi-Albert model, percolation, maximal degree,
diameter, network distance, robustness, vulnerability, multitype branching process, killed branching random walk, spectral radius estimation.\\[0.5cm]

\noindent
{\bf{Mathematics Subject Classification (2010):}} Primary 05C80; Secondary 60J85, 60K35, 90B15\\[0.5cm]

\newpage

\section{Introduction}

\subsection{Motivation}

The problem of resilience of networks to either random or targeted attack is crucial to many instances of real world networks, ranging from
social networks (like collaboration networks) via technological networks (like electrical power grids) to communication networks (like the 
world-wide web). 
Of particular importance is whether the connectivity of a network relies on a small number of hubs and whether their loss will cause a large-scale breakdown.
Albert, Albert and Nakarado~\cite{AAN04} argue that ``the power grid is robust to most perturbations, 
yet disturbances affecting key transmission substations greatly reduce its ability to function''.
Experiments of Albert, Jeong, and Barab\'asi~\cite{AJB00}, 
Holme, Kim, Yoon and Han~\cite{HolKimYonHan02} and more recently of Mishkovski, Biey and Kocarev~\cite{MisBieKoc11} find robustness under random attack but vulnerability to the removal of a small number of key nodes in several other networks. 
The latter paper includes a study of data related to the human brain, as well as street, collaboration and power grid networks.
One should expect this qualitative behaviour across the range of real world networks and 
it should therefore also be present in the key mathematical models of large complex networks. 
\smallskip\pagebreak[3]

A well established feature of many real world networks is that in a suitable range of values~$k$ the proportion of nodes with degree~$k$ 
has a decay of order $k^{-\tau}$ for a power law exponent~$\tau$. The robustness of networks with small power law exponent under 
random attack has been observed heuristically by Callaway et al.~\cite{CNSW00} and Cohen et al.~\cite{CEbAH01}, but there seems to be controversy in these early papers
about the extent of the vulnerability in the case of targeted attack, see the discussion in \cite{DorMen01} and \cite{CEbAHReply}. 
As Bollob{\'a}s and Riordan~\cite[Section~10]{BR03} point out, such heuristics, informative as they may be, are often quite far away from a mathematical 
proof that applies to a given model. In their seminal paper~\cite{BR03} they provide the first rigorous proof of robustness in the case of a specific 
preferential attachment model with power law exponent~$\tau=3$, and later Dereich and M\"orters~\cite{DerMoe13} proved for a class of preferential attachment 
models with tunable power law exponent that networks are robust under random attack if the power law exponent satisfies $\tau\leq 3$, but not when $\tau>3$,
thus revealing the precise location of the phase transition in the behaviour of preferential attachment networks.  However, the question of vulnerability of robust 
networks when a small number  of privileged nodes is removed has not been studied systematically in the mathematical  literature so far.
\smallskip

It is the aim of the present paper to give evidence for the vulnerability of robust networks by providing rigorous proof that preferential attachment networks 
in the robust regime $\tau\leq 3$ undergo a radical change under a targeted attack, i.e.\ when an arbitrarily small 
proportion $\eps>0$ of the most influential nodes in the network is removed. Our main results, presented in Section~\ref{ss-13},
show how precisely this change affects the degree structure, the length of shortest paths and the connectivity in the network. The results  take the form
of limit theorems revealing explicitly the dependence of the relevant parameters on $\eps$. Not only does this provide further insight into the topology of the network 
and the behaviour as $\eps$ tends to zero, it also allows a comparison to other network models, and thus exposes two classes of robust networks with rather 
different behaviour, see Section~\ref{ss-15}. Our mathematical analysis of the network is using several new ideas and combines  probabilistic and combinatorial arguments with analytic techniques informed by new probabilistic 
insights. It is crucially based on the local approximation of preferential attachment networks by a branching random walk  with a killing boundary recently found in~\cite{DerMoe13}.  
In this approximation the removal of a proportion of old vertices corresponds to the introduction of a second killing boundary. On the one hand  this adds an additional level  
of complexity to the process, as the mathematical understanding of critical phenomena in branching  models on finite intervals is only just emerging, see for example~\cite{HarHesKyp13pre}. 
On the other hand compactness of the typespace for this branching process opens up new avenues that are exploited, for example, in the form of spectral estimates
based on rather subtle information on the shape of principal eigenfunctions of an operator associated with the branching process. 

\pagebreak[3]

\subsection{Mathematical framework}

The established mathematical model for a large network is a sequence $(\graph_n \colon n \in \N)$ of (random or deterministic)
graphs $\graph_n$ with vertex set $\vset_n$ and an edge set $\eset_n$ consisting of (ordered or unordered) edges between the vertices.
We assume that the size $|\vset_n|$ of the vertex set is increasing to infinity in probability, so that results about the limiting behaviour in the sequence of
graphs may be seen as predictions for the behaviour of large networks. In all cases  of interest here the asymptotic number of edges per vertex is finite so that
the topology in a bounded neighbourhood of a typical vertex is not affected by the overall size of the graph. As a result, metrics based on the local 
features of the graph typically stabilize. An important example for this is the proportion of vertices with a given degree in $\graph_n$, which in
the relevant models converges and allows us to talk about the asymptotic degree distribution. The mathematical models of power law networks therefore have an asymptotic degree distribution with the probability of degree $k$ decaying like~$k^{-\tau}$, as $k\to\infty$, for some $\tau>1$. 
Our focus here is on the global properties emerging in network models with asymptotic power law degree distributions.
\smallskip

A crucial global feature of a network is its connectivity, and in particular the existence of a large connected component. To describe this, we denote
by $\connect_n$ a connected component in $\graph_n$ with maximal number of nodes. The graph sequence $(\graph_n \colon n \in \N)$ has \emph{a giant component} if there exists 
a constant $\zeta>0$ such that
\[
\frac{|\connect_n|}{\E|\vset_n|} \to \zeta \qquad \text{as }n \to \infty,
\]
where the convergence holds in probability. 
We remark that for the models usually considered the issue is not the convergence itself but the
positivity of the limit~$\zeta$. If a giant component exists and the length of the shortest path between any two vertices
in the largest component of $\graph_n$ is asymptotically bounded by a multiple of $\log n$, the network
is called \emph{small}. If it is asymptotically bounded by a constant multiple of $\log\log n$, the
network is called \emph{ultrasmall}.
\smallskip

To model a \emph{random attack} on the network we keep each vertex in $\graph_n$ independently with probability~$p\in[0,1]$ and otherwise we remove it from 
the vertex set together with all its adjacent edges, i.e.\ we run \emph{vertex percolation} on $\graph_n$ with \emph{retention probability} $p$. 
The resulting graph is denoted by $\graph_n(p)$. A simple coupling argument shows that there exists a critical parameter $\pc\in[0,1]$ such that 
the sequence $(\graph_n(p) \colon n \in \N)$ has a giant component if $\pc<p\leq 1$, and it does not have a giant component if $0\leq p<\pc$.
If $\pc=0$, i.e.\ if the giant component cannot be destroyed by percolation with any positive retention parameter, then the network is called \emph{robust}.
To study resilience of networks to \emph{targeted attack} we look at models in which the construction of the network favours certain vertices in such a way 
that these privileged vertices have a better chance of getting a high degree than others. When $\graph_n$ is a network on $n$ vertices, we label these by 
$1$ to $n$ and assume that vertices are ordered in decreasing order of privilege. This assumption allows an attacker to target the most
privileged vertices without knowledge of the entire graph.  The \emph{damaged graph} $\grapheps_n$, for some $\eps\in (0,1)$, is obtained from $\graph_n$ by 
removal of all vertices with label less or equal than $\eps n$ together with all adjacent edges. In particular, the new vertex set is 
$\vseteps_n=\{\fst+1,\ldots,n\}$.
\smallskip

We investigate the problem of vulnerability of random networks to targeted attack  in the context of \emph{preferential attachment networks}. This class
of models has been popularised by Barab\'asi and Albert~\cite{BA99} and has received considerable attention in the scientific literature. The idea is that
a sequence of graphs is constructed by successively adding vertices. Together with a new vertex, edges are introduced by connecting it
to existing vertices at random with a probability depending on the degree of the existing node; the higher the degree the more likely the connection. 
Despite the relatively simple principle on which this model is based it shows a good match of global features with real networks. For example, the 
asymptotic degree distributions follow a power law, and if the power law exponent satisfies $\tau< 3$, then the network is robust and ultrasmall. Variations in the 
attachment probabilities allow for tuning of the power law exponent~$\tau$.
\smallskip

The first mathematically rigorous study of resilience in preferential attachment networks was performed by 
Bollob{\'a}s and Riordan~\cite{BR03} for the so-called LCD model. This model variant has the advantage of having
an explicit static description, which makes it much easier to analyse than models that have only a dynamic description.
It also has a fixed power law exponent $\tau=3$, hence, 
Bollobas and Riordan~\cite{BR03} prove only results for this specific exponent. They show that in this case the network is 
robust and identify a critical proportion $\eps_{\rm c}<1$ such that the removal of the oldest $\fst$ oldest vertices 
leads to the destruction of the giant component if and only if $\eps\ge\eps_{\rm c}$. Note that this is not in line with the notion of 
vulnerability that we are interested in,  as we only want to remove a \emph{small} proportion of old vertices.
\smallskip

In the present paper, we consider the question of vulnerability in the following model variant, introduced in~\cite{DerMoe09}. Let $\N_0$
be the set of nonnegative integers and fix a function $f\colon \N_0 \to (0,\infty)$, 
which we call the \emph{attachment rule}. The most important case is if $f$ is affine, i.e.\ $f(k)=\gamma k +\beta$ for parameters $\gamma\in[0,1)$ and 
$\beta>0$, but non-linear functions are allowed. Given an attachment rule $f$, we define a growing sequence $(\graph_n \colon n\in \N)$ of random graphs by the 
following dynamics:

\begin{itemize}
\item Start with one vertex labelled $1$ and no edges, i.e.\ $\graph_1$ is given by $\vset_1:=\{1\}$,~$\eset_1:=\emptyset$;
\item Given the graph $\graph_n$, we construct $\graph_{n+1}$ from $\graph_n$ by adding a new vertex labelled $n+1$ and, for each $m \le n$ independently, 
inserting the directed edge $(n+1,m)$ with probability
\[
\frac{f(\text{indegree of } m \text{ at time }n)}{n} \land 1.
\] 
\end{itemize}
Formally we are dealing with a sequence of directed graphs but all edges point from the younger to the older vertex. 
Hence, directions can be recreated from the undirected, labelled graph. For all structural questions, particularly regarding 
connectivity and the length of shortest paths, we regard $(\graph_n \colon n\in\N)$ as an undirected network.
Dereich and M\"orters consider in \cite{DerMoe09, DerMoe13} concave attachment rules~$f$. Denoting the asymptotic slope of $f$ by
\begin{equation}\label{gammadef}
\gamma:=\lim_{k \to \infty} \frac{f(k)}{k},
\end{equation} 
they show that for $\gamma \in (0,1)$ the sequence $(\graph_n \colon n\in\N)$ has an asymptotic degree distribution which follows a
power law with exponent 
\[
\tau=\frac{\gamma+1}{\gamma}.
\] 
In the case $\gamma <\frac{1}{2}$, or equivalently $\tau> 3$, there exists a critical percolation parameter $\pc>0$ such that $(\graph_n(p) \colon n\in\N)$ has a giant component 
if and only if $p>\pc$.\footnote{The results of \cite{DerMoe13} are formulated for edge percolation, whereas we consider vertex percolation. It is not hard to see that for the existence 
or nonexistence of the giant component this makes no difference.}  If however $\gamma\ge \frac{1}{2}$, or equivalently $\tau\leq 3$,  
the sequence $(\graph_n(p)  \colon n\in\N)$ has a giant component for all $p \in (0,1]$, i.e.\ $(\graph_n \colon n\in\N)$ is  robust. This is the regime of interest in
the present paper.

\subsection{Statement of the main results}\label{ss-13}

Our focus is on the case of an affine attachment rule $f(k)=\gamma k +\beta$ with $\beta >0$ and $\gamma \in [\frac{1}{2},1)$. Recall that for this choice
the preferential attachment network is robust. We use the symbol $a(\eps)\asymp b(\eps)$ to indicate that there are constants $0<c<C$ and some $\eps_0>0$ such 
that $c b(\eps)\leq a(\eps) \leq C b(\eps)$ for all $0<\eps<\eps_0$.

\begin{theorem}{\bf{(Loss of connectivity)}}\label{thm:pc}
For any $\eps \in (0,1)$, there exists $\pc(\eps)\in (0,1]$ such that the damaged network
\begin{equation} \label{lin_pc_exist}
(\grapheps_n(p) \colon n\in\N)  \text{ has a giant component } \Leftrightarrow \; p> \pc(\eps).
\end{equation}
If $\gamma = \frac{1}{2}$ then 
\begin{equation}\label{pccrit}
\pc(\eps)  \asymp \frac{1}{\log (1/\eps)}.
\end{equation}
If $\gamma > \frac{1}{2}$ then, as $\eps \downarrow 0$,
\[
\pc(\eps) =\frac{2\gamma-1}{\sqrt{\beta (\gamma+\beta)}}\eps^{\gamma -1/2} \big[1+O\big(\eps^{\gamma -1/2} (\log \eps)\big)\big].
\]
\end{theorem}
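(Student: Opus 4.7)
The proof proceeds in three steps: reduction of the percolation problem on $\grapheps_n$ to survival of a multitype branching process on a compact type space, identification of $\pc(\eps)$ with the reciprocal of the spectral radius of the associated first-moment operator $L_\eps$, and sharp asymptotic analysis of this spectral radius as $\eps \downarrow 0$.

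For the first two steps I adapt the local approximation of \cite{DerMoe13}. Assigning each vertex $v$ the type $-\log(v/n) \in [0, \log n]$, the exploration of a typical neighbourhood in $\graph_n$ converges to a continuous-type branching random walk on $[0, \infty)$ whose mean offspring kernel is explicit from $f(k) = \gamma k + \beta$. Removal of the oldest $\fst$ vertices is exactly killing at the upper boundary $\log(1/\eps)$, and edge thinning with retention probability $p$ scales the whole mean measure by $p$. The resulting first-moment operator is $p L_\eps$, with $L_\eps$ the positive compact operator obtained by restricting the original mean kernel to $\typespace_\eps := [0, \log(1/\eps)]$. A standard exploration-plus-sprinkling argument (as in \cite{DerMoe13}) shows that $\grapheps_n(p)$ has a giant component iff the killed branching random walk is supercritical, which by Perron--Frobenius is equivalent to $p\,\rho(L_\eps) > 1$. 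This yields (\ref{lin_pc_exist}) together with the identification $\pc(\eps) = 1/\rho(L_\eps)$; positivity of $\pc(\eps)$ at each fixed $\eps > 0$ is automatic from compactness of $L_\eps$.

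The core of the argument is the sharp asymptotic of $\rho(L_\eps)$. One studies the principal eigenfunction $\eigenfct_\eps$ on $\typespace_\eps$ with $\eigenfct_\eps(\log(1/\eps)) = 0$. The affine structure of the kernel turns the eigenvalue equation into an integro-differential equation whose characteristic exponents $\alpha_\pm(\rho)$ are determined by the Laplace transform of the one-step type displacement. For $\gamma > 1/2$ the two exponents are distinct real analytic branches of $\rho$, the eigenfunction admits a two-term exponential representation $c_+ e^{\alpha_+ x} + c_- e^{\alpha_- x}$, and matching the upper-boundary condition forces $\rho(L_\eps) = \sqrt{\beta(\gamma+\beta)}(2\gamma-1)^{-1}\eps^{1/2-\gamma}[1+O(\eps^{\gamma-1/2}\log\eps)]$, equivalently the asymptotic of $\pc(\eps)$ claimed in the theorem. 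For $\gamma = 1/2$ the two branches $\alpha_\pm$ coincide, the exponential ansatz degenerates and produces a secular linear term, $\eigenfct_\eps$ becomes asymptotically linear on $\typespace_\eps$, and this forces $\rho(L_\eps) \asymp \log(1/\eps)$, which is exactly (\ref{pccrit}).

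The main obstacle will be the quantitative error term in the regime $\gamma > 1/2$. Order-of-magnitude bounds on $\rho(L_\eps)$ are soft Perron--Frobenius estimates, but pinning down the leading constant and the sharp error requires uniform control of the shape of $\eigenfct_\eps$ both in the bulk and in the boundary layer near $x = \log(1/\eps)$, together with a careful matched expansion between the two regions; this is the fine spectral information advertised in the abstract. The critical case $\gamma = 1/2$ is qualitatively distinct because of the double-root degeneracy, and I expect it to need a separate comparison argument, for instance with a solvable one-dimensional model on $\typespace_\eps$ with absorbing boundaries.
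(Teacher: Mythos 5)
Your reduction to a branching random walk on the compact type strip, and the identification $\pc(\eps)=1/\rho(L_\eps)$ via compactness and the Krein--Rutman theorem, track the paper's Sections 2.1 and 2.3. Your route to the spectral asymptotics is genuinely different from the paper's, which bounds $\rho_\eps(A)$ from above by a combinatorial decomposition $A^2=\sum_i B_i$ of ancestral lines and from below by a guessed two-piece exponential test function $g_{\rm e}$; a characteristic-exponent analysis could in principle unify both bounds. However, the boundary condition you impose is incorrect in a way that blocks the argument as stated.

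You propose to impose the Dirichlet condition $\eigenfct_\eps(\log(1/\eps))=0$ on the principal eigenfunction. This cannot hold: $L_\eps$ is a positive compact integral operator whose square has strictly positive kernel, so by Krein--Rutman the eigenfunction is strictly positive on the closed type interval, never vanishing at an endpoint. Worse, the quantitative shape is the opposite of what your condition suggests. Your $x=\log(1/\eps)$ is $\loc=\log\eps$ in the paper's coordinates --- the boundary where the oldest retained vertices sit --- and a particle there is maximally fertile; correspondingly, the paper's approximate eigenfunction $g_{\rm e}$ in the proof of Proposition~\ref{pro:pc}(b) equals the constant $c_\mar\in\{1,\beta/(\gamma+\beta)\}$ at $\loc=\log\eps$, uniformly in $\eps$, and is small, of order $\eps^{1/2}$, only at the \emph{other} endpoint $\loc=0$. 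Moreover, since $L_\eps$ is a one-step mean integral operator and not a generator, the closure conditions for your differentiated system are not Dirichlet at all. Writing $u=\eigenfct(\cdot,\leftp)$, $v=\eigenfct(\cdot,\rightp)$ and evaluating the undifferentiated equation $Ag=\rho g$ at the endpoints yields the $\rho$-independent coupling relations $u(0)=v(0)$ and $\beta\,v(\log\eps)=(\gamma+\beta)\,u(\log\eps)$; these mixed conditions between the two mark components, not a vanishing condition, are what pin down $\rho$. A matched expansion with the condition you wrote would not recover the leading constant $(2\gamma-1)/\sqrt{\beta(\gamma+\beta)}$. With the correct boundary data the characteristic-exponent strategy remains viable and possibly cleaner than the paper's direct estimates, so the idea is worth reworking.
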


Theorem~\ref{thm:pc} shows that the removal of an arbitrarily small proportion of old nodes makes the network vulnerable 
to percolation, but does not destroy the giant component. The steep increase of $\pc(\eps)$ as $\eps$ leaves zero shows that, even when a small proportion 
of old nodes has been removed in the network,  the 
removal of further old nodes is much more destructive than the removal of a similar proportion of randomly chosen nodes.
\smallskip

As for small $\eps$ the
critical value $\pc(\eps)$ is strictly decreasing in $\gamma$, this effect is stronger the closer $\gamma$ is to $\frac{1}{2}$. This result might be perceived as slightly counterintuitive since the preferential attachment becomes stronger as $\gamma$ increases and therefore we might expect older nodes to be more privileged and a targeted attack to be more effective than in the small $\gamma$ regime. However, the effect of the stronger preferential attachment is more than compensated by the fact that networks with a small value
of $\gamma$ have a (stochastically) smaller number of edges and are therefore a-priori more vulnerable. Note also that $\pc(\eps)$ may be equal to $1$ if $\eps$ is not sufficiently small 
in which case \eqref{lin_pc_exist} implies that the damaged network has no giant component. 
%
%\smallskip
%
In the case $\gamma=\frac{1}{2}$ the implied constants in \eqref{pccrit} can be made explicit as \smash{$c=\frac{1}{\gamma+\beta}$} and 
\smash{$C=\frac{1}{\beta}$}, but we cannot
show that they match asymptotically. However, we conjecture that
\[
\pc(\eps)  \sim \frac{1}{\sqrt{\beta (\gamma+\beta)}} \,  \frac{1}{\log(1/\eps)} \qquad \text{as } \eps \downarrow 0,
\]
meaning that the ratio of the left- and right-hand sides converges to one.
\smallskip

To gain further insight into the topology of the damaged graph, we now look at the asymptotic indegree distribution and at the largest 
indegree in the network. Recall from~\cite{DerMoe09} that outdegrees are asymptotically Poisson distributed and therefore indegrees are
solely responsible for the power law behaviour as well as the dynamics of maximal degrees. From here onwards we additionally assume 
that~$\beta\leq 1$. 
\smallskip

For a probability measure $\nu$ on the nonnegative integers we write 
$\nu_{\ge k}:=\nu(\{k,k+1,\ldots\})$ and $\nu_k:=\nu(\{k\})$. We denote the indegree of vertex $m$ at time $n \ge m$ by $\Z[m,n]$ and the empirical indegree distribution in 
$\grapheps_n$ by $X^{\eps}(n)$, that is,
\[
X_k^{\eps}(n)=\frac{1}{n-\fst} \sum_{m=\fst+1}^n \mathbbm{1}_{\{k\}}(\Z[m,n]), \qquad \mbox{ for } k \in \N_0.
\]
We write $\maxdeg(\graph)$ for the maximal indegree in a directed graph $\graph$. For $s,t >0$, let
\[
B(s,t):=\int_0^1 x^{s-1} (1-x)^{t-1} \, dx
\]
denote the beta function at $(s,t)$.
Before we make statements about the network after the targeted attack, we recall the situation in the undamaged network. 
In this case Dereich and M\"orters in \cite{DerMoe09} show that, almost surely,
\[
\lim_{n \to \infty} X^0(n) =\mu
\]
in total variation norm. The limit is the probability measure~$\mu$ on the nonnegative integers given by
\[
\mu_{\ge k}= \frac{B(k+\frac{\beta}{\gamma},\frac{1}{\gamma})}{B(\frac{\beta}{\gamma}, \frac{1}{\gamma})} \qquad \mbox{ for } k \in \N_0,
\]
and satisfies $\lim_{k \to \infty} \log\mu_{\ge k}/\log k =-1/\gamma$. Moreover, the maximal indegree satisfies, in probability,
\[
\frac{\log \maxdeg(\graph_n)}{\log (n^{\gamma})}\to 1  \qquad \text{as }n\to \infty.
\]
Our theorem shows that in the damaged network the asymptotic degree distribution is no longer a power law but has exponential tails. The maximal degree 
grows only logarithmically, not polynomially.
\medskip

\begin{theorem}{\bf{(Collapse of large degrees)}}\label{thm:degrees}
Let $\eps \in (0,1)$. Almost surely,
\[
\lim_{n \to \infty} X^{\eps}(n) =\mu^{\eps}
\]
in total variation norm. The limit is the probability measure~$\mu^{\eps}$ on the nonnegative integers given by
\[
\mu_{\ge k}^{\eps}= \int_{\eps}^1 \frac{1}{1-\eps} B\big(k,\tfrac{\beta}{\gamma}\big)^{-1} \int_{y^{\gamma}}^1 x^{\frac{\beta}{\gamma} -1} (1-x)^{k-1} \, dx \, dy \qquad \mbox{ for } k \in \N_0.
\]
It satisfies $\lim_{k \to \infty} \log\mu_{\ge k}^{\eps}/k = \log(1-\eps^{\gamma})$. Moreover, the maximal indegree satisfies, in probability,
\begin{equation}\label{eq:MaxDegThm}
\frac{\maxdeg(\grapheps_n)}{\log n}\to -\frac{1}{\log(1-\eps^{\gamma})} \qquad \text{as }n\to \infty.
\end{equation}
\end{theorem}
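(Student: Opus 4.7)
The plan is built around a local-limit approximation: for fixed $m$ and growing $n$, the increment $\Z[m,n+1]-\Z[m,n]$ is Bernoulli of mean $f(\Z[m,n])/n$, so reparametrising by $u=\log(n/m)$ turns the process into an approximate continuous-time pure-birth chain $(Y_u)_{u\geq 0}$ with $Y_0=0$ and state-$k$ rate $f(k)=\gamma k+\beta$. Solving $\partial_u G = (x-1)(\gamma x\,\partial_x G + \beta G)$ for the generating function $G(u,x)=\E[x^{Y_u}]$ gives
\[
G(u,x) = \left(\frac{e^{-\gamma u}}{1-x(1-e^{-\gamma u})}\right)^{\beta/\gamma},
\]
identifying $Y_u$ as a negative binomial with parameters $\beta/\gamma$ and $e^{-\gamma u}$. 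Evaluating at $u=-\log y$ and rewriting the negative-binomial tail via the regularised incomplete beta function yields $\P(Y_{-\log y}\geq k) = B(k,\beta/\gamma)^{-1}\int_{y^{\gamma}}^1 x^{\beta/\gamma-1}(1-x)^{k-1}\,dx$, which is exactly the $y$-dependent integrand in the claimed formula for $\mu_{\geq k}^{\eps}$.

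For the convergence of the empirical distribution, I would first show that $\P(\Z[\lfloor yn\rfloor,n]=k)\to\P(Y_{-\log y}=k)$ uniformly in $y\in[\eps,1]$ for each fixed $k$, by coupling the Bernoulli increments of the discrete chain with a Poisson process of matching rate and bounding the total-variation error using linearity of $f$ together with the deterministic bound $\Z[m,n]\leq n-m$. Summation over $m\in\{\fst+1,\dots,n\}$ is then a Riemann sum yielding $\E[X_k^{\eps}(n)]\to\mu_k^{\eps}$. To upgrade to almost-sure convergence I would use the Doob martingale $M_j=\E[X_k^{\eps}(n)\mid\F_j]$, where $\F_j$ is the sigma-algebra generated by the graph construction up to step $j$; the increments are of order $O(1/n)$ with overwhelming probability because a newly inserted vertex has uniformly sub-exponential outdegree, so that Azuma--Hoeffding combined with Borel--Cantelli gives $X_k^{\eps}(n)\to\mu_k^{\eps}$ almost surely for each $k$. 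Total-variation convergence then follows from Scheff\'e's lemma, since $\sum_k X_k^{\eps}(n)=1=\sum_k\mu_k^{\eps}$ almost surely.

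The tail statement $\log\mu_{\geq k}^{\eps}/k\to\log(1-\eps^{\gamma})$ is a Laplace-type analysis of the double integral for $\mu_{\geq k}^{\eps}$: the inner integrand in $x$ is maximised near the endpoint $x=y^{\gamma}$, producing a leading factor $(1-y^{\gamma})^{k}$, and then the $y$-integral concentrates at the endpoint $y=\eps$, leaving only subexponential prefactors. For the maximal-indegree statement, set $K_n^{\pm} = (1\pm\delta)\log n/(-\log(1-\eps^{\gamma}))$. The upper bound follows from a union bound over the $n-\fst$ surviving vertices combined with a uniform tail $\P(\Z[\lfloor yn\rfloor,n]\geq K_n^{+})\leq C(1-\eps^{\gamma})^{K_n^{+}}$ valid for $y\geq\eps$, giving $\P(\maxdeg(\grapheps_n)\geq K_n^{+})=O(n(1-\eps^{\gamma})^{K_n^{+}})=o(1)$. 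For the matching lower bound I would run a second-moment argument on the count $N_n=|\{m\in(\fst,\lfloor(\eps+\eta)n\rfloor]:\Z[m,n]\geq K_n^{-}\}|$; its first moment grows polynomially by the same tail estimate, and the variance is dominated by the first moment plus pairwise correlations, which are only weakly positive because distinct vertices compete for a bounded supply of new edges per step.

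The main obstacle is making the local-limit approximation quantitative \emph{uniformly up to $k$ of order $\log n$}: the maximal-degree argument requires $\P(\Z[\lfloor yn\rfloor,n]\geq k)\asymp (1-y^{\gamma})^{k}$ with the correct exponential rate for all $k\leq C\log n$ and all $y\in[\eps,1]$. Because the coupling of the discrete Bernoulli chain with the pure-birth chain accumulates error over $\Theta(n)$ steps while the target probability itself is exponentially small in $k$, one has to match exponential rates rather than merely bound total-variation distances. Carefully tracking the polynomial beta-function prefactor and controlling the edge effects near $y=\eps$ in the second-moment variance estimate is where I expect the technical bulk of the argument to sit.
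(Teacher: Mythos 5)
Your overall framework is sound — identifying the indegree evolution with a linear pure-birth chain, recognising the negative-binomial marginals of $Z_t$ (equivalent to the paper's Lemma~\ref{lem:distZt}), the Riemann-sum passage to $\mu^\eps$, Scheff\'e for total-variation upgrade, and Laplace asymptotics for the tail. But the part you yourself flag as the main obstacle is a genuine gap, and the device you propose to close it does not work.

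The issue is in the maximal-degree statement. Your proposed union bound requires a uniform tail of the form $\P(\Z[m,n]\geq K_n^+)\leq C(1-\eps^\gamma)^{K_n^+}$ for all $m>\fst$, and this tail is itself of size about $n^{-(1+\delta)}$ when $K_n^+=(1+\delta)\log n/(-\log(1-\eps^\gamma))$. A total-variation coupling of the discrete Bernoulli chain with the continuous-time pure-birth chain accumulates an error of order $(\log n)^2/n$ over the $\Theta(n)$ time steps (each step contributes $O((f(\Z)/j)^2)$). That error term swamps the target probability $n^{-(1+\delta)}$, so a TV bound gives only $\P(\Z[m,n]\geq K_n^+)\leq C(1-\eps^\gamma)^{K_n^+}+O((\log n)^2/n)$, and multiplying by $n$ leaves a $(\log n)^2$ contribution that does not vanish. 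The paper avoids this by not coupling in probability at all: via the artificial-time change $\psi(k)=\sum_{j<k}1/j$ and Lemma~\ref{lem:timeCoup}, it couples the waiting times $T_m[i]$ to genuine exponentials $T[i]$, yielding $\P(\Z[m,n]\leq k)=P\big(\sum_{i\le k}T[i]\geq -\log\vartheta+o(1)\big)$ where the $o(1)$ error is deterministic, of order $(\log n)^2/n$, and sits in the \emph{time argument} of a continuous distribution, not additively on the probability. This keeps the exponential rate in $k$ exact, which is precisely the uniformity you identified as the hard part.

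Two smaller points. First, the indegree processes $\Z[m,\cdot]$ are exactly \emph{independent} across $m$ in this model (Lemma~\ref{lem:degreeCoup}), because each incoming vertex connects independently to each earlier vertex. Your worry about ``pairwise correlations because distinct vertices compete for a bounded supply of new edges'' does not apply here; there is no such competition. This independence is what lets the paper replace your Doob martingale plus Azuma argument by a one-line Chernoff bound for the empirical distribution, and — more importantly — lets it avoid a second-moment argument for the max-degree lower bound entirely: by the stochastic domination in Lemma~\ref{lem:degreeCoup}, the indegrees of the oldest $\lfloor\delta\eps n\rfloor$ surviving vertices dominate i.i.d.\ copies of a single indegree, and the maximum of i.i.d.\ variables is then analysed by a direct product computation. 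Second, the paper's upper bound likewise uses this i.i.d.\ domination to write $\P(\max\leq c\log n)\geq\P(\Z[\underline m,n]\leq c\log n)^{\Delta}$ and Taylor-expand, rather than a union bound, again sidestepping the TV-error issue.

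So: your formula for $\mu^\eps$, the Riemann-sum convergence of $\E[X^\eps_k(n)]$, the Scheff\'e step, and the Laplace tail analysis are all correct and essentially the paper's route. The maximal-degree part of your plan, as written, would not close because a TV-level coupling cannot resolve exponential rates at scale $n^{-1}$; the paper's time-change coupling is the missing ingredient, and the unused independence of indegrees is what makes the surrounding arguments short.
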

\pagebreak[3]

It is worth mentioning that $\mu=\mu^{0}$, so Theorem~\ref{thm:degrees} remains valid for $\eps =0$.
Theorem~\ref{thm:degrees} shows in particular that by removing a proportion of the oldest vertices we have removed all
vertices with a degree bigger than a given constant multiple of $\log n$. This justifies the comparison of our vulnerability
results with empirical studies of real world networks such as~\cite{AAN04}, in which all nodes whose degree exceeds a given 
threshold are removed. Note also that as $\eps\downarrow 0$ the right-hand side in \eqref{eq:MaxDegThm}
is asymptotically equivalent to $\eps^{-\gamma}$ and the growth of the maximal degree 
is the faster the larger $\gamma$. 
\bigskip

\pagebreak[3]

Preferential attachment networks are  ultrasmall for sufficiently small power law exponents. For our model, 
M\"onch~\cite{Moench13pre}, see also~\cite{DerMoeMoe12}, has shown that, denoting by $\dist_{\graph}$ the graph distance in the graph~$\graph$, 
for independent random vertices $V_n,W_n$ chosen uniformly from~$\connect_n$, we have
\begin{alignat*}{3}
&\text{if }\gamma = \frac{1}{2}\text{ then}\qquad \qquad & \qquad &\dist_{\graph_n}(V_n,W_n) \sim \frac{\log n}{\log \log n},\qquad\qquad &&\qquad\\
&\text{if }\gamma > \frac{1}{2}\text{ then}\qquad \qquad &  \qquad&\dist_{\graph_n}(V_n,W_n) \sim \frac{4\log \log n}{\log(\gamma/(1-\gamma))},\qquad\qquad&& \qquad
\end{alignat*}
meaning that the ratio of the left- and right-hand side converges to one in probability as $n \to \infty$. Removing an arbitrarily small proportion of old 
vertices however leads to a massive increase in the typical distances, as our third main theorem reveals. We say that a sequence of events $({\mathcal E}_n \colon n\in\N)$
holds \emph{with high probability} if $\P({\mathcal E}_n)\to 1$ as $n\to\infty$.
\smallskip

\begin{theorem}{\bf{(Increase of typical distances)}}\label{thm:distances}
Let $\eps>0$ be sufficiently small so that $\pc(\eps)<1$, and let $V_n,W_n$ be independent, uniformly chosen vertices 
in $\vseteps_n$. Then, for all $\delta >0$,
\[
\dist_{\grapheps_n}(V_n,W_n) \ge \frac{1-\delta}{\log (1/\pc(\eps))}\,  \log n \qquad \text{with high probability}. 
\]
\end{theorem}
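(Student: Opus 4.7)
The plan is a first-moment path-counting argument that exploits the same branching random walk approximation underlying Theorem~\ref{thm:pc}. Set $\rho(\eps):=1/\pc(\eps)$ and $K=K_n:=\lfloor(1-\delta)\log n/\log\rho(\eps)\rfloor$. Writing $N_k(v)$ for the number of self-avoiding paths of length $k$ in $\grapheps_n$ starting at~$v$, independence of $W_n$ from $(\grapheps_n,V_n)$ together with its uniform distribution on $\vseteps_n$ gives, by conditioning on $(\grapheps_n,V_n)$ and summing over the $n-\fst$ possible values of~$W_n$, the bound
\[
\P\bigl(\dist_{\grapheps_n}(V_n,W_n)\leq K\bigr)\;\leq\;\frac{1}{n-\fst}\Bigl(1+\sum_{k=1}^{K}\E[N_k(V_n)]\Bigr).
\]
The theorem therefore reduces to proving a first-moment bound of the form $\E[N_k(V_n)]\leq C(\eps)\rho(\eps)^k$, since summing a geometric series and using $\rho(\eps)^K\leq n^{1-\delta}$ then yields $\P(\dist_{\grapheps_n}(V_n,W_n)\leq K)\leq C'(\eps)n^{-\delta}\to 0$.

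The first-moment bound will be extracted from the multitype branching random walk $\tree$ on the compact typespace $\typespace$ that locally approximates $\grapheps_n$ and that is already central to the proof of Theorem~\ref{thm:pc}. Types encode rescaled labels, and the key structural consequence of deleting the $\fst$ oldest vertices is that both ends of $\typespace$ become killing boundaries, so $\typespace$ is compact. By the very identification underlying Theorem~\ref{thm:pc}, the mean offspring operator~$A$ of $\tree$ has spectral radius exactly $\rho(\eps)$. Krein-Rutman theory then provides a positive principal eigenfunction $\eigenfct$ and the pointwise estimate $(A^k\mathbf{1})(\type)\leq C_1\eigenfct(\type)\rho(\eps)^k$; integrating against the starting-type distribution of a uniformly chosen $V_n\in\vseteps_n$ yields the desired $\E[N_k(V_n)]\leq C(\eps)\rho(\eps)^k$, modulo controlled errors from the graph-to-branching-process coupling.

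The main obstacle is precisely the first-moment bound for $k$ up to $K$, which is of order $\log n$, whereas the standard local coupling between $\grapheps_n$ and $\tree$ is accurate only to depth of order $\log\log n$. One therefore has to run a direct combinatorial induction on the graph itself, step by step matching the creation of a new neighbour along a partial path to one application of~$A$ while tracking the rescaled label of the current endpoint. The targeted removal of the $\fst$ oldest vertices plays two distinct roles here: it makes $\typespace$ compact, which prevents the prefactor $C(\eps)$ from diverging because types cannot escape to infinity, and it forces~$A$ to have the spectral radius computed in the proof of Theorem~\ref{thm:pc}. Uniformity of $C(\eps)$ in the starting vertex~$V_n$, and in particular for vertices close to the newly oldest label $\fst+1$, will rely on the shape information for $\eigenfct$ near the boundary of $\typespace$ that is extracted along the way in the proof of Theorem~\ref{thm:pc}.
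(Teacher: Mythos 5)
Your high-level strategy matches the paper's: bound $\P(\dist_{\grapheps_n}(V_n,W_n)\le K)$ by a first moment count of short paths, and control that count via the spectral radius of the mean offspring operator of the approximating branching process. The reduction $\P(\dist\le K)\le \frac{1}{n-\fst}\bigl(1+\sum_{k\le K}\E[N_k(V_n)]\bigr)$ and the target bound $\E[N_k(V_n)]\lesssim\rho(\eps)^k$ are both correct, and you rightly identify the crux: the standard local coupling only reaches depth $O(\log\log n)$, whereas here $K\asymp\log n$.

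However, the step you label as the ``main obstacle'' is then dispatched by fiat, and two devices that the paper cannot do without are missing. First, you make no use of the admissibility restriction. The paper fixes $\theta_n=(\log n)^2$, uses Theorem~\ref{thm:degrees} to pass (at cost $o(1)$) to paths in which every step satisfies $\Z[v_{i-1},v_i]\le\theta_n$, and only bounds the count $N_h^{\theta_n}$ of such $\theta_n$-admissible paths. This truncation is not a cosmetic simplification: the pathwise domination of the Markov chain $\Z[v,\cdot]$ by the continuous-time process $\bar Z$ rests on inequalities of the form $e^{-\bar f(k)/j}\le 1-f(k)/j$, which hold only when $f(k)/j=o(1)$, i.e.\ $k=o(n)$. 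Without the a priori cap $k\le\theta_n$ this fails, and your proposed ``direct combinatorial induction'' on the graph will not close. Second, you write of a coupling with ``controlled errors,'' but per-step errors of order $1/n$ cannot simply be tracked over $\log n$ steps in a coupling argument; what the paper actually establishes (Lemmas~\ref{lem:graphTreeDomi} and~\ref{lem:ExpPathGTree}) is a genuine stochastic domination of the truncated path-counting tree $\Ftree_n^{\eps,\theta_n}(v_0)$ by a \emph{slightly inflated} branching process $\IBP^{\underline\eps}((1+\bar\delta)f)$, where the extra factor $(1+\bar\delta)$ in the attachment rule and the extension of the strip to $[\log\underline\eps,0]$ with $\underline\eps<\eps$ provide exactly the slack needed for the per-step domination inequalities. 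You then need continuity of the spectral radius and of the principal eigenfunction in $\bar\delta$ to pass from $\bar\rho$ back to $\rho_\eps(A)$ as $\bar\delta\downarrow0$; this is not just a matter of integrating the eigenfunction against the starting-type distribution. These two ingredients --- truncation via Theorem~\ref{thm:degrees} and domination by an inflated $\IBP$ on a slightly larger strip --- are the substance of the argument, and as written your proposal does not supply them.
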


Our proof gives the result for all values~$\gamma \in [0,1)$ but if $\gamma<\frac12$ even without removal of old vertices
the typical distances in the network are known to be of order $\log n$, so that this is not surprising. We believe that there is an upper bound
matching the lower bound above, but the proof would be awkward and the result much less interesting.
\smallskip
 
In the next two sections we discuss some further ramifications of our main results.

\subsection{Non-linear attachment rules}\label{sec:nonlin_result}

So far we have presented results for the case of affine attachment rules~$f$, given by
$f(k)=\gamma k + \beta$. While the fine details of the network behaviour often depend on the exact model 
definition, we expect the principal scaling and macroscopic features to be independent of these details. To 
investigate this \emph{universality} we now discuss to what extent Theorem~\ref{thm:pc} remains true when we 
look at more general non-linear attachment rules~$f$. 
\smallskip 

\pagebreak[3]

We consider two classes of attachment rules.
\begin{itemize}
\item[(1)] A function $f\colon \N_0 \to (0,\infty)$ is called a \emph{L-class} attachment rule if there exists $\gamma \in [0,1)$ and $0< \beta_l \le \beta_u$ such that $\gamma k +\beta_l \le f(k) \le \gamma k  +\beta_u$ for all $k$. Note that the parameter $\gamma$ for a L-class rule is uniquely defined by~\eqref{gammadef}.
\item[(2)]
A concave function $f \colon  \N_0 \to (0,\infty)$ with $\gamma:=\lim_{k \to \infty} f(k)/k \in [0,1)$ is called a \emph{C-class} attachment rule.
Note that concavity of $f$ implies that the limit above exists and that $f$ is non-decreasing. 
\end{itemize}

The asymptotic slope of the attachment rule determines the key features of the model. For example,  Dereich and M\"orters~\cite{DerMoe09}
show that for certain $C$-class attachment rules with $\gamma >0$ the asymptotic degree distribution is a power law with exponent $\tau=1+1/\gamma$. 
The following theorem shows that $\gamma$ also determines the scaling of the critical percolation probability for the damaged network .

\begin{theorem}{\bf{(Loss of connectivity, non-linear case)}}\label{thm:genf}
Let $f$ be a L-class or C-class attachment rule. For all $\eps \in (0,1)$,  
\[
\pc(\eps):=\inf\big\{p\colon (\grapheps_n(p) \colon n\in\N) \text{ has a giant component}\big\}>0.
\]
Moreover, if $f$ is in the L-class and
\begin{alignat*}{3}
&\text{if }\gamma = \frac{1}{2}\text{ then}\qquad \qquad & \qquad &\lim_{\eps\downarrow 0}\frac{\log \pc(\eps)}{\log \log(1/\eps)} = -1,\qquad\qquad &&\qquad\\
&\text{if }\gamma > \frac{1}{2}\text{ then}\qquad \qquad &  \qquad&\lim_{\eps \downarrow 0}\frac{\log \pc(\eps)}{\log \eps} = \gamma -\frac{1}{2}.\qquad\qquad&& \qquad
\end{alignat*}\nopagebreak
If $f$ is in the C-class, the statement remains true in the case $\gamma>\frac12$, and in the case $\gamma=\frac{1}{2}$ if the limit is replaced by a 
$\limsup_{\eps \downarrow 0}$ and the equality by `$\le$'.
\end{theorem}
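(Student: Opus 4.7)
The strategy is to reduce both the L-class and C-class statements to the affine case of Theorem~\ref{thm:pc} by sandwiching the attachment rule $f$ between two affine rules and invoking a monotone coupling of preferential attachment processes. For $f$ in the L-class the sandwich $\gamma k + \beta_l \leq f(k) \leq \gamma k + \beta_u$ is immediate from the definition. For $f$ in the C-class, concavity forces the increments $f(k+1)-f(k)$ to be non-increasing with limit $\gamma$, and telescoping yields the lower bound $f(k) \geq \gamma k + f(0)$; for the upper bound, given any $\delta>0$ one picks $k_0$ with $f(k+1)-f(k) \leq \gamma+\delta$ for $k\geq k_0$, and summing gives $f(k) \leq (\gamma+\delta) k + C_\delta$ for some finite $C_\delta$.

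The key tool is a monotone coupling: if $f_1 \leq f_2$ pointwise and at least one of the $f_i$ is non-decreasing, then the two preferential attachment processes can be jointly realised on one probability space using a single family of uniform random variables to decide each potential edge, and an induction on $n$ maintains $\Z_1[m,n] \leq \Z_2[m,n]$ for every vertex $m$, where $\Z_i$ denotes the indegree in process $i$. Under this hypothesis, the monotonicity of one of the $f_i$ combined with $f_1 \leq f_2$ gives $f_1(\Z_1[m,n]) \leq f_2(\Z_2[m,n])$, so every edge of $\graph_n(f_1)$ also lies in $\graph_n(f_2)$. Since vertex percolation preserves inclusions, this yields $\pc(f_1,\eps) \geq \pc(f_2,\eps)$. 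C-class rules are non-decreasing by concavity, so the coupling applies directly. For a possibly non-monotone L-class $f$, one passes through the non-decreasing envelopes $f_\bullet(k) := \inf_{j \geq k} f(j)$ and $f^\bullet(k) := \sup_{j \leq k} f(j)$, both of which are L-class with the same parameters $(\gamma,\beta_l,\beta_u)$; chaining the coupling through $\gamma k+\beta_u \geq f^\bullet \geq f$ and $f \geq f_\bullet \geq \gamma k + \beta_l$ produces
\[
\pc(\gamma k+\beta_u,\eps) \,\leq\, \pc(f,\eps) \,\leq\, \pc(\gamma k+\beta_l,\eps),
\]
and the analogous chain in the C-class yields $\pc((\gamma+\delta)k+C_\delta,\eps) \leq \pc(f,\eps) \leq \pc(\gamma k+f(0),\eps)$.

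Positivity $\pc(f,\eps) > 0$ follows from the lower sandwich together with Theorem~\ref{thm:pc}. In the L-class, the two affine sandwich rules share the slope $\gamma$, so Theorem~\ref{thm:pc} yields both asymptotic limits directly. In the C-class with $\gamma > \tfrac12$, Theorem~\ref{thm:pc} gives $\gamma - \tfrac12 \leq \log\pc(f,\eps)/\log\eps \leq \gamma+\delta-\tfrac12$ for every $\delta>0$, and letting $\delta \downarrow 0$ produces the desired limit. The main obstacle, and precisely why the theorem weakens to a one-sided statement in the C-class with $\gamma = \tfrac12$, is the asymmetry of the C-class sandwich at the critical slope: the upper bound $f(k) \leq (\tfrac12+\delta) k + C_\delta$ yields only the polynomial lower bound $\pc(f,\eps) \gtrsim \eps^\delta$, which is vastly weaker than the logarithmic upper bound $\pc(f,\eps) \lesssim 1/\log(1/\eps)$ coming from the lower sandwich. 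Only the upper control $\log\pc(f,\eps)/\log\log(1/\eps) \leq -1 + o(1)$ survives, giving exactly the stated $\limsup$ bound. No affine upper sandwich with exact slope $\tfrac12$ is available for general concave $f$, since such a rule can exceed any such function by an unbounded amount.
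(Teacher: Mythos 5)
Your proof is correct and follows essentially the same sandwich-and-monotone-coupling strategy as the paper: bound $f$ from above and below by affine attachment rules, transfer this to a graph domination, and invoke Theorem~\ref{thm:pc}. One remark: the detour through the monotone envelopes $f_\bullet$ and $f^\bullet$ in the L-class case is unnecessary, because the coupling lemma you state only requires \emph{one} of the two compared functions to be non-decreasing, and the affine bounds $\gamma k + \beta_l$ and $\gamma k + \beta_u$ already are; the direct comparison of $f$ with each affine rule suffices. Your treatment of the C-class upper bound (slope $\gamma + \delta$ with $\delta \downarrow 0$, rather than the paper's tangent-type bound $\gamma_j k + \beta_j$ with $\gamma_j = f(j+1) - f(j) \downarrow \gamma$) is an equivalent and equally valid variant, and your explanation of why only the $\limsup$ bound survives for C-class at $\gamma = \tfrac12$ is accurate.
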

\pagebreak[3]

Theorem~\ref{thm:genf} implies that the damaged network $(\grapheps_n \colon n\in\N)$ is not robust. But as $\lim_{\eps \downarrow 0}\pc(\eps)= 0$
it is still `asymptotically robust' for $\eps \downarrow 0$ in the sense that when less than order $n$ old vertices are destroyed, the critical percolation
parameter remains zero. We formulate this as a corollary. For two graphs $\graph=(\vset,\eset)$ and $\tilde{\graph}=(\tilde{\vset},\tilde{\eset})$, we write $\graph \ge \tilde{\graph}$ if there is a coupling such that $\vset \supseteq \tilde{\vset}$ and $\eset \supseteq \tilde{\eset}$.

\begin{corollary}\label{cor:optimal}
Let $(m_n \colon n \in \N)$ be a sequence of natural numbers with $\lim_{n \to \infty} m_n/n=0$. The network $(\graph_n^{\sss(m_n)} \colon n\in\N)$,
consisting of the graphs  $\graph_n$ damaged by removal of the oldest $m_n$ vertices along with all adjacent edges, is robust.
\end{corollary}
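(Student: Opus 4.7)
The plan is to combine monotonicity of percolation with the fact that $\pc(\eps)\downarrow 0$ as $\eps\downarrow 0$, which is immediate from Theorem~\ref{thm:genf}. Fix any retention probability $p\in(0,1]$; by the arbitrariness of $p$ it suffices to exhibit a giant component in $\graph_n^{(m_n)}(p)$.

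First, I choose $\eps_0\in(0,1)$ with $\pc(\eps_0)<p$, which is possible because Theorem~\ref{thm:genf} gives $\pc(\eps)\downarrow 0$ via the explicit scalings for $\gamma>\tfrac12$ or via the $\limsup$ bound at $\gamma=\tfrac12$. By \eqref{lin_pc_exist} there is then a constant $\zeta>0$ such that, in probability as $n\to\infty$,
\[
\frac{|\connect({\sf G}^{\eps_0}_n(p))|}{(1-\eps_0)n}\longrightarrow \zeta.
\]
Since $m_n/n\to 0$, there exists $n_0$ with $m_n\le\lfloor\eps_0 n\rfloor$ for all $n\ge n_0$, and for such $n$ the graph $\graph_n^{(m_n)}$ contains ${\sf G}^{\eps_0}_n$ as a subgraph (the same edges among a larger vertex set).

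Coupling the two percolation procedures using a common family of Bernoulli$(p)$ retention variables on the shared vertices yields $\graph_n^{(m_n)}(p)\ge {\sf G}^{\eps_0}_n(p)$ in the sense of the definition preceding Corollary~\ref{cor:optimal}, hence $|\connect(\graph_n^{(m_n)}(p))|\ge|\connect({\sf G}^{\eps_0}_n(p))|$. Since $\E|\vset(\graph_n^{(m_n)}(p))|=p(n-m_n)\sim pn$, the ratio $|\connect(\graph_n^{(m_n)}(p))|/\E|\vset(\graph_n^{(m_n)}(p))|$ is asymptotically bounded below by $(1-\eps_0)\zeta/p>0$ in probability. A matching upper bound, obtained by sandwiching between ${\sf G}^{\eps_0}_n(p)$ and the undamaged $\graph_n(p)$ (which also has a giant component), converts this lower bound into convergence to a strictly positive limit, establishing the giant component for every $p>0$. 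There is no substantive obstacle here: the deep work is already packaged in Theorem~\ref{thm:genf}, and all that remains is the monotone coupling together with the eventual inequality $m_n\le\lfloor\eps_0 n\rfloor$.
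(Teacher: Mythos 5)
Your argument is essentially the paper's: choose $\eps_0$ with $\pc(\eps_0)<p$ via Theorem~\ref{thm:genf}, observe that eventually $m_n\le\lfloor\eps_0 n\rfloor$, and use the monotone coupling $\graph_n^{\sss(m_n)}(p)\ge\grapheps_n(p)$. One small overclaim: sandwiching $|\connect(\graph_n^{\sss(m_n)}(p))|$ between $|\connect(\grapheps_n(p))|$ and $|\connect(\graph_n(p))|$ does not yield convergence of the ratio, since the two bounding sequences converge to different constants ($(1-\eps_0)\zeta^{\eps_0}(p)$ versus $\zeta^{0}(p)$); it only gives a positive $\liminf$ and a finite $\limsup$. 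This does not harm the conclusion — what robustness requires, and what the paper's own one-line proof implicitly uses, is that the giant component is not destroyed, which the lower bound alone establishes — but you should not assert convergence from the sandwich.
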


\begin{proof} Let $p\in (0,1)$. By Theorem~\ref{thm:genf}, there exists $\eps>0$ such that $\pc(\eps)<p$. Choose $n_0 \in \N$ such that $m_n/n<\eps$ for all $n \ge n_0$. Then 
$\graph_n^{\sss(m_n)} \ge \grapheps_n$ for all $n \ge n_0$, implying $\graph_n^{\sss(m_n)}(p) 
\ge \grapheps_n(p)$. Since $(\grapheps_n(p) \colon n\in\N)$ has a giant component, so does $(\graph_n^{\sss(m_n)}(p) \colon n\in\N)$.
\end{proof}

Theorem~\ref{thm:genf} is derived from Theorem~\ref{thm:pc} using the monotonicity of the network in the attachment rule. Its appeal 
lies in the large class of functions to which it applies. The L-class attachment rules are all positive, bounded perturbations 
of linear functions. In Figure 1 we see several examples: On the left a concave function which is also in the C-class, then a convex function and a function which is convex in one and concave in another part of its domain. The latter 
examples are not monotone, and all three are asymptotically vanishing perturbations of an affine attachment rule. The example of an 
L-class attachment rule on the right shows that this may also fail.

\begin{center}
\includegraphics[scale=0.5]{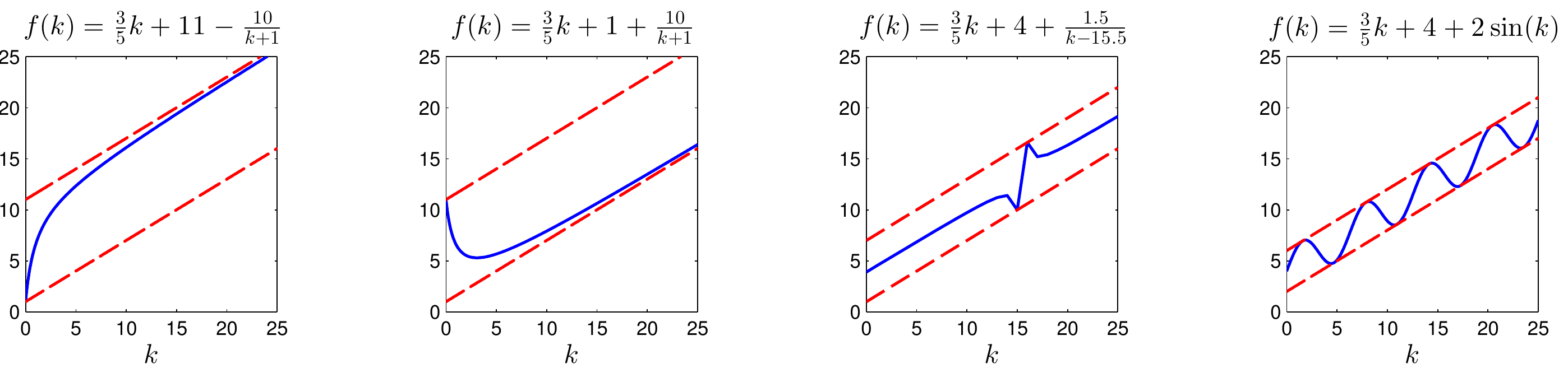}\\
{\small {\bf{Figure 1:}} Examples for L-class attachment rules. The blue curve is the attachment rule, the red, dashed lines are linear lower and upper bounds.}
\end{center}
%[scale=0.4, height=6cm]{Lclass_examples.pdf}
The C-class attachment rules are always non-decreasing as positive concave functions and always have a linear lower bound with the same asymptotic slope $\gamma$ as the function itself. However, when the perturbation $k \mapsto f(k)-\gamma k$ is not bounded, then there exists no linear function with slope $\gamma$ which is an upper bound to the attachment rule; any linear upper bound will be steeper. Two examples are displayed in Figure 2.

\begin{center}
\includegraphics[scale=0.5]{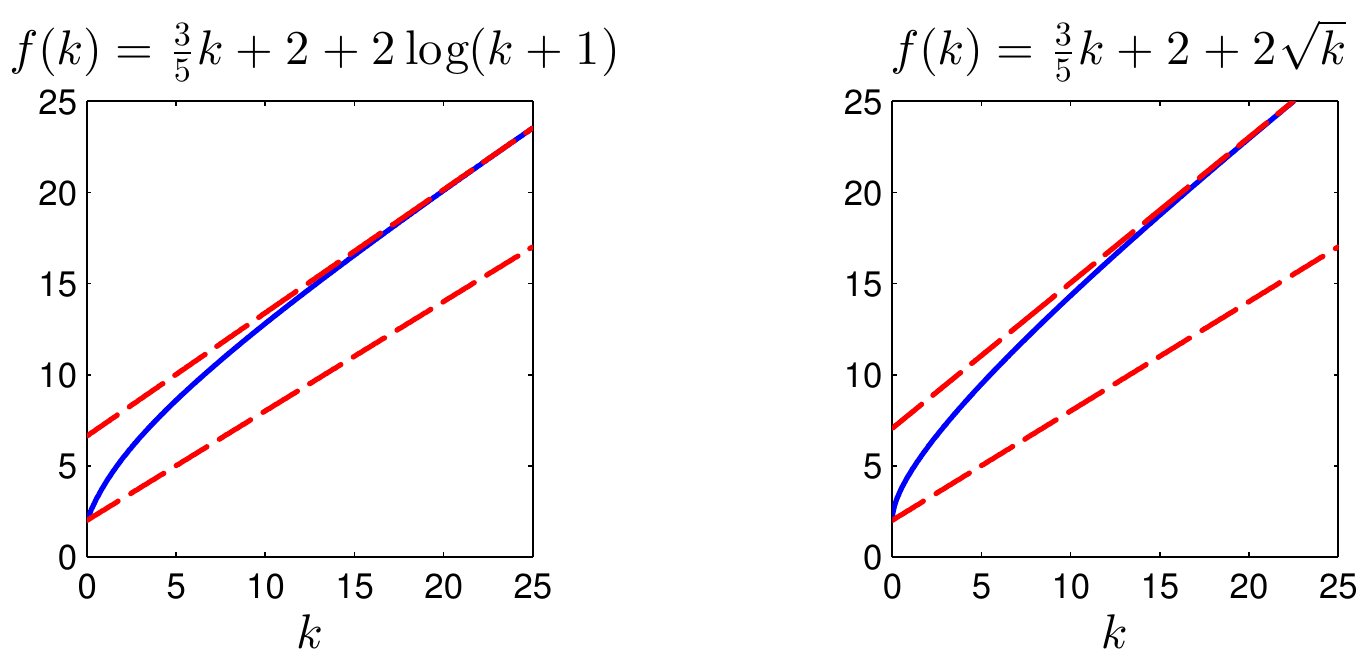}\\
\small{ {\bf{Figure 2:}} Examples for C-class attachment rules. The blue curve is the attachment rule, the red, dashed lines are linear lower and upper bounds. The slope of the upper bound is strictly larger than $\gamma$.}
\end{center}

\subsection{Vulnerability of other network models}\label{ss-15}

We would like to investigate to what extent our results are common to robust random network models rather than specific to preferential attachment networks. Again our focus is on 
Theorem~\ref{thm:pc} and we look at two types of networks, the configuration model and the inhomogeneous random graphs. Both types have an explicit static description 
and are therefore much easier to analyse than the preferential attachment networks studied in our main theorems.

\subsubsection{Configuration model} \label{sec:CM_results}

A targeted attack can be planned particularly well when the degree sequence of the network is known. A random graph model with fixed degree sequence is given by the configuration model. For $n \in \N$, let ${\bf{d_n}}=(d_i^n)_{i=1}^n \in \N^n$ with $\sum_{i=1}^n d_i^n$ even. To simplify notation, we write $d_i$ instead of $d_i^n$. The multigraph $\graph_n^{\sss (\CM)}$ on vertex set $\{1,\ldots,n\}$ is constructed as follows: to every vertex~$i$ attach $d_i$ half-edges. Combine the half-edges into pairs by a uniformly random matching of the set of all half-edges. Each pair of half-edges is then joined to form an edge of $\graph_n^{\sss (\CM)}$.
The configuration model received a lot of attention in the literature, see \cite{Hof13pre} and the references therein. 
A good targeted attack in the configuration model is the removal of the vertices with the highest degree and we denote by $\graph_n^{{\sss(\CM)},\eps}$ the network after removal of the $\fst$ vertices with the largest degree.
\smallskip  

Let $n_k=|\{i \le n\colon d_i=k\}|$ be the number of vertices with degree $k$ and assume that there exists a $\N$-valued random variable $D$ with $0<\E D<\infty$ and $\P(D=2)<1$, such that
\begin{equation}\label{as:CM}
n_k/n \to \P(D=k) \quad \mbox{ for all} \; k \in \N \qquad \text{and}\qquad \frac{1}{n}\sum_{k=1}^{\infty} kn_k \to \E D \qquad \text{as }n\to \infty.
\end{equation}
In particular, the law of $D$ is the weak limit of the empirical degree distribution in $(\graph_n^{\sss (\CM)}\colon n \in \N)$ and the network is robust if $\E[D^2]=\infty$. Our focus is on the case that the distribution of $D$ is a power law with exponent $\tau=1+1/\gamma$, $\gamma \ge \frac{1}{2}$.

\begin{theorem}\label{thm:CM}
Let $\eps \in (0,1)$, $\gamma \ge \frac{1}{2}$ and suppose there is a constant $C>0$ such $\P(D > k) \sim C k^{-1/\gamma}$ as $k\to \infty$. Then there exists $\pc^{\sss (\CM)}(\eps)>0$ such that
\[
(\graph_n^{{\sss (\CM)},\eps}(p) \colon n \in \N) \text{ has a giant component } \Leftrightarrow \, p>\pc^{\sss (CM)}(\eps).
\]
Moreover,
\[
\pc^{\sss (\CM)}(\eps) \asymp \begin{cases} 
\frac{1}{\log(1/\eps)} & \quad \text{if }\gamma =\frac{1}{2},\\
\eps^{2\gamma -1} & \quad \text{if }\gamma >\frac{1}{2}.\end{cases}
\]
\end{theorem}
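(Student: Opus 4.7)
The plan is to reduce Theorem~\ref{thm:CM} to the classical Molloy--Reed criterion applied to an appropriately truncated degree sequence. Removing the $\fst$ vertices of largest degree in $\graph_n^{\sss (\CM)}$ is essentially the same as removing all vertices whose degree exceeds a threshold $K_n(\eps)$, where $K_n(\eps)$ is the $\fst$-th order statistic of the degree sample. Under the assumption \eqref{as:CM} and the power-law tail $\P(D>k)\sim Ck^{-1/\gamma}$, concentration of the empirical distribution gives $K_n(\eps)=K(\eps)(1+o_{\P}(1))$, where $K(\eps)$ is characterised by $\P(D>K(\eps))=\eps$, so $K(\eps)\asymp \eps^{-\gamma}$.

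The next step is to identify the local weak limit of the percolated damaged network $\graph_n^{\sss (\CM),\eps}(p)$ rooted at a uniformly chosen surviving vertex. Conditionally on the degree sequence, a given half-edge of a surviving vertex is matched to another surviving half-edge with probability $q=\E[D;\,D\le K(\eps)]/\E[D]$, and, given this, the partner half-edge is size-biased on $\{1,\dots,K(\eps)\}$. Combined with independent bond percolation with retention $p$, the exploration from the root is well approximated by a two-stage Galton--Watson tree whose root has the truncated law $(D\mid D\le K(\eps))$ and whose subsequent offspring counts are $\mathrm{Bin}(\hat D^{\eps}-1,\,pq)$, where $\hat D^{\eps}$ is the size-biased version of that truncated law. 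By a Janson--Luczak type theorem for the configuration model, a giant component exists if and only if this branching process is supercritical, which after simplification yields
\[
\pc^{\sss (\CM)}(\eps)\;=\;\frac{\E[D]}{\E\bigl[D(D-1);\,D\le K(\eps)\bigr]}\bigl(1+o(1)\bigr).
\]
Strict positivity of this critical value for every $\eps\in(0,1)$ follows because the truncated second moment is finite, while the supercritical direction (a giant component indeed emerges above the threshold) is the standard lower bound from the Janson--Luczak theorem applied to the truncated sequence.

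It then remains to compute the asymptotics of the truncated factorial moment. Integration by parts together with the power-law tail gives
\[
\E\bigl[D(D-1);\,D\le K\bigr]\;\asymp\;\int_1^K k\,\P(D>k)\,dk\;\asymp\;\int_1^K k^{1-1/\gamma}\,dk,
\]
which is of order $\log K$ when $\gamma=\tfrac12$ and of order $K^{2-1/\gamma}$ when $\gamma>\tfrac12$. Substituting $K\asymp\eps^{-\gamma}$ yields $\log(1/\eps)$ in the first case and $\eps^{-(2\gamma-1)}$ in the second, matching the two scalings in the statement.

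The main obstacle is not the moment computation but the rigorous justification that removing the $\fst$ vertices of largest degree can be replaced by deterministic truncation at $K(\eps)$ without affecting the Molloy--Reed criterion up to multiplicative constants. This requires controlling fluctuations of the empirical $\eps$-quantile under the hypothesis \eqref{as:CM}, and showing that the contribution of the small set of surviving vertices with degrees near $K(\eps)$ alters the truncated second moment only by a bounded factor. With the heavy-tail assumption this is a careful but routine application of order-statistics estimates and moment convergence for truncated heavy-tailed sums; once these approximations are in place, the two-sided asymptotic bounds $\pc^{\sss (\CM)}(\eps)\asymp 1/\log(1/\eps)$ and $\pc^{\sss (\CM)}(\eps)\asymp \eps^{2\gamma-1}$ follow immediately from the formula above.
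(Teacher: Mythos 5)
Your approach matches the paper's: both reduce to Janson's criterion for degree-dependent vertex percolation on the configuration model, arriving at a critical value of the form $\pc^{\sss(\CM)}(\eps)\asymp \E D/\E[D(D-1)\mathbbm{1}_{\{D\le K\}}]$ with $K\asymp\eps^{-\gamma}$, and both extract the stated scalings from the power-law tail of $D$. The paper evaluates the truncated moment via the quantile representation $\E[D(D-1)\mathbbm{1}_{\{D\le m\}}]-m(m-1)(\eps-[1-F](m))=\E\bigl[[1-F]^{-1}(U)([1-F]^{-1}(U)-1)\mathbbm{1}_{\{U\ge\eps\}}\bigr]$ with $U$ uniform on $(0,1)$, which handles exactly the lattice effect at the threshold degree $K_n=[1-F]^{-1}(\eps)+O(1)$ that you flag as the "main obstacle" and bury in a $(1+o(1))$; since only $\asymp$ is claimed this makes no difference, and your integration-by-parts computation of the truncated second moment gives the same answer as the paper's $\int_{\eps}^{u_0}u^{-2\gamma}\,du$.
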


We observe the same basic phenomenon as in the corresponding preferential attachment models: While the undamaged network is robust, after removal
of an arbitrarily small proportion of privileged nodes the network becomes vulnerable to random removal of vertices. 
However, when $\gamma>\frac{1}{2}$, then the increase of the critical percolation parameter $\pc(\eps)$ as $\eps$ leaves zero is less steep than in the corresponding preferential attachment model.

Note that we make the assumption $0<\E D<\infty$. In the case  $\E D=\infty$  Bhamidi et al.\ \cite{BhaHofHoo10}
show a more extreme form of vulnerability, where the connected network can be disconnected with high probability by deleting a bounded number of vertices.
%\smallskip

\subsubsection{Inhomogeneous random graphs} \label{sec:IRGresult}
Inhomogeneous random graphs are a generalization of the classical Erd\H{o}s-R\'{e}nyi random graph. Let $\kappa \colon (0,1]\times (0,1] 
\to (0,\infty)$ be a symmetric kernel. The inhomogeneous random graph $\graph_n^{\sss (\kappa)}$ corresponding to kernel $\kappa$ has the vertex set $\vset_n =\{1,\ldots,n\}$ and any pair of distinct vertices $i$ and
$j$ is connected by an edge independently with probability
\begin{equation} \label{def:IRRG}
\P(\{i,j\} \text{ present in }\graph_n^{\sss (\kappa)}) = \tfrac{1}{n} \kappa\big(\tfrac{i}{n},\tfrac{j}{n}\big) \land 1.
\end{equation}
Many features of this class of models are discussed by Bollob{\'a}s, Janson and Riordan~\cite{BolJanRio07} 
and van der Hofstad~\cite{Hof13pre}.
The first inhomogeneous random graph model we consider is a version of the \emph{Chung-Lu model}, see for example~\cite{CL02, CL06}.
The relevant kernel is
\[
\kappa^{\sss(\CL)}(x,y) =x^{-\gamma} y^{-\gamma} \qquad\mbox{ for }  \, x,y \in (0,1].
\]
This is an example of a kernel of the form $\kappa(x,y)=\psi(x) \psi(y)$, for some $\psi$, which are called kernels of \emph{rank one}, see \cite{BolJanRio07}. Note that a similar factorisation occurs in the configuration model since the probability that vertices $i$ and $j$ are directly connected is roughly proportional to $d_i d_j$.
The network corresponding to $\kappa^{\sss (\CL)}$ has an asymptotic degree distribution which is a power law with exponent $\tau=1+1/\gamma$. 
\smallskip

\pagebreak[3]

The second inhomogeneous random graph model we consider is chosen such that the edge probabilities agree (at least asymptotically) with those in a preferential 
attachment network and the asymptotic degree distribution is a power law with exponent $\tau=1+1/\gamma$. The relevant kernel~is 
\[
\kappa^{\sss(\PA)}(x,y)=\frac{1}{(x \land y)^{\gamma} (x\vee y)^{1-\gamma}} \qquad\mbox{ for }  \, x,y \in (0,1].
\]
Note that if $\gamma\not=\sfrac{1}{2}$ this kernel is not of rank one, but \emph{strongly inhomogeneous}. These two kernels allow us to demonstrate the difference between \emph{factorising 
models} (like the configuration model) and preferential attachment models within one model class. 

We denote by $\graph_n^{\sss (\CL)}$, resp.~$\graph_n^{\sss (\PA)}$ the 
inhomogeneous random graphs with kernel~$\kappa^{\sss(\CL)}$, resp.~kernel $\kappa^{\sss(\PA)}$. If $\gamma \ge \frac{1}{2}$, then $(\graph_n^{\sss (\CL)}\colon n\in\N)$ and 
$(\graph_n^{\sss (\PA)}\colon n\in\N)$ are robust.  Since the kernels $\kappa^{\sss(\CL)}$ and $\kappa^{\sss(\PA)}$ are decreasing in both components, vertices with small labels 
are favoured in the corresponding models.
We denote by $\graph_n^{{\sss (\CL)},\eps}$, resp.~$\graph_n^{{\sss (\PA)},\eps}$, what remains of the graph $\graph_n^{\sss (\CL)}$, resp.~$\graph_n^{\sss (\PA)}$, after removal 
of all vertices with label at most $\eps n$ along with their adjacent edges.

The following theorem confirms that, like in the preferential attachment and in the configuration model, the removal of a positive fraction of key vertices makes the networks vulnerable to random removal of nodes.
Notice that the $\kappa^{\sss (\CL)}$ and $\kappa^{\sss (\PA)}$ agree for 
$\gamma =\frac{1}{2}$ so that we only have to state a result for $\graph_n^{{\sss (\CL)},\eps}$ in this regime.

\begin{theorem}\label{thm:IRG}
Let $\gamma \ge \frac{1}{2}$, $\star \in \{\CL,\PA\}$ and $\eps\in (0,1)$. There exists $\pc^{\sss(\star)}(\eps)>0$ such that
\begin{equation*} 
(\graph_n^{{\sss (\star)},\eps}(p)\colon n \in \N) \text{ has a giant component } \Leftrightarrow \; p> \pc^{\sss(\star)}(\eps).
\end{equation*}
Moreover, 
\begin{align*}
&\pc^{\sss (\CL)}(\eps)=\begin{cases}
\frac{1}{\log(1/\eps)} & \text{if }\gamma=\frac{1}{2},\\
(2\gamma -1) \eps^{2\gamma-1} [1+O(\eps^{2\gamma-1})] & \text{if } \gamma>\frac{1}{2},
\end{cases}\end{align*}
and
\[
\pc^{\sss(\PA)}(\eps) \asymp \eps^{\gamma -1/2}\qquad \; \qquad \qquad  \qquad \qquad \text{ if }\gamma >\sfrac{1}{2}.
\]
\end{theorem}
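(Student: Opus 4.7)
The approach applies the general theory of inhomogeneous random graphs of Bollob\'as, Janson and Riordan, which reduces the existence of a giant component to the spectral analysis of a bounded integral operator. The damaged graph $\graph_n^{{\sss (\star)},\eps}(p)$ is an inhomogeneous random graph on vertex set $\vseteps_n$ with types $x_i=i/n$ and edge probability $(p\,\kappa^{\sss(\star)}(i/n,j/n)/n)\wedge 1$; the empirical type distribution converges to Lebesgue measure on $[\eps,1]$. Since both kernels are bounded on $[\eps,1]^2$, the BJR theorem applies and implies that $(\graph_n^{{\sss (\star)},\eps}(p)\colon n\in\N)$ has a giant component if and only if the norm of the integral operator
\[
(T_{\kappa,\eps} f)(x) = \int_\eps^1 \kappa(x,y)\, f(y)\, dy \qquad \text{on } L^2([\eps,1])
\]
exceeds $1/p$. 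This yields $\pc^{\sss(\star)}(\eps) = 1/\|T_{\kappa^{\sss(\star)},\eps}\|\in(0,\infty)$, establishing the first claim.

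For the Chung--Lu kernel the operator is rank one: $\kappa^{\sss(\CL)}(x,y) = x^{-\gamma} y^{-\gamma}$ admits $x^{-\gamma}$ as its unique (up to scaling) eigenfunction with eigenvalue $\lambda_\eps = \int_\eps^1 y^{-2\gamma}\,dy$. For $\gamma=\tfrac12$ one has $\lambda_\eps = \log(1/\eps)$, hence $\pc^{\sss(\CL)}(\eps) = 1/\log(1/\eps)$ exactly. For $\gamma>\tfrac12$, $\lambda_\eps = (\eps^{1-2\gamma}-1)/(2\gamma-1)$, yielding
\[
\pc^{\sss(\CL)}(\eps) = \frac{(2\gamma-1)\,\eps^{2\gamma-1}}{1-\eps^{2\gamma-1}} = (2\gamma-1)\,\eps^{2\gamma-1}\big[1 + O(\eps^{2\gamma-1})\big].
\]

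For the preferential attachment kernel with $\gamma>\tfrac12$ the operator is not of rank one, but the scaling symmetry $\kappa^{\sss(\PA)}(cx,cy) = c^{-1}\kappa^{\sss(\PA)}(x,y)$ identifies $w(x) = x^{-1/2}$ as the natural scale-invariant candidate eigenfunction. For the lower bound on $\pc^{\sss(\PA)}(\eps)$ I would apply Schur's test (in its symmetric $L^2$ form) to $T_{\kappa^{\sss(\PA)},\eps}$ with weight $w$: a direct integration yields
\[
\frac{(T_{\kappa^{\sss(\PA)},\eps}\, w)(x)}{w(x)} = \frac{x^{\gamma-1/2}\eps^{1/2-\gamma} + x^{1/2-\gamma}-2}{\gamma-1/2} \le \frac{2\,\eps^{1/2-\gamma}}{\gamma-1/2}
\]
uniformly in $x\in[\eps,1]$, so that $\pc^{\sss(\PA)}(\eps) \ge \tfrac12(\gamma-\tfrac12)\,\eps^{\gamma-1/2}$. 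For the matching upper bound I would exploit $\|T\|\ge \|Tg\|_2/\|g\|_2$ with the simple probe $g=\mathbbm{1}_{[\eps,1]}$: an explicit computation shows $(T_{\kappa^{\sss(\PA)},\eps}\, g)(x) \sim x^{-\gamma}/\gamma$ as $x\downarrow\eps$, hence $\|Tg\|_2^2 \ge c\int_\eps^{2\eps} x^{-2\gamma}\,dx \asymp \eps^{1-2\gamma}$ while $\|g\|_2^2 \le 1$, giving $\pc^{\sss(\PA)}(\eps)\le C\,\eps^{\gamma-1/2}$. The main obstacle is matching these upper and lower bounds on $\|T_{\kappa^{\sss(\PA)},\eps}\|$ in the absence of any closed form for the spectral radius; the success of the argument rests on the scale-symmetry heuristic that singles out $w(x)=x^{-1/2}$, with the boundary corrections introduced by truncation to $[\eps,1]$ producing the critical $\eps^{1/2-\gamma}$ scaling.
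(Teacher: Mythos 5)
Your proposal is correct and rests on the same foundation as the paper's proof: the Bollob\'as--Janson--Riordan reduction to the $L^2(\eps,1)$ operator norm of $T_{\kappa^{\sss(\star)}}$, and the exact rank-one computation for the Chung--Lu kernel, are identical in both. Where you diverge from the paper is in how you bound $\|T_{\kappa^{\sss(\PA)}}\|_{L^2(\eps,1)}$ when $\gamma>\tfrac12$. The paper obtains the upper bound on the operator norm via the Cauchy--Schwarz (Hilbert--Schmidt) estimate $\|T\|^2\le\int_\eps^1\int_\eps^1\kappa^{\sss(\PA)}(x,y)^2\,dy\,dx$, whereas you apply Schur's test with the scale-invariant weight $w(x)=x^{-1/2}$; both yield $\|T\|\lesssim\eps^{1/2-\gamma}$, and your Schur computation (whose supremum over $x\in[\eps,1]$ is attained at the endpoints, giving $\sup_x[h(x)-2]=\eps^{1/2-\gamma}-1<2\eps^{1/2-\gamma}$) is correct and in fact produces a slightly sharper constant. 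For the lower bound on the norm, the paper probes with the normalized power law $g(x)=c_\eps x^{-\gamma}$, a scaled version of the eigenfunction of the unrestricted kernel, and integrates exactly; you instead probe with the constant $\mathbbm{1}_{[\eps,1]}$ and observe $(T_{\kappa^{\sss(\PA)}}\mathbbm{1})(x)\gtrsim\eps^{-\gamma}$ on $[\eps,2\eps]$, which again yields the matching order $\eps^{1/2-\gamma}$ with less computation but a less informative constant. Your scale-symmetry heuristic singling out $w(x)=x^{-1/2}$ is the right intuition and makes the Schur route arguably cleaner; the paper's choice of probe $x^{-\gamma}$ is more structurally informed and would be preferable if one wished to push the constants further (compare the explicit constants appearing in Theorem~\ref{thm:pc}).
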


The fact that the Chung-Lu model is vulnerable to targeted attacks has also been remarked by van der Hofstad in Section~9.1 of \cite{Hof13pre}.
\smallskip

Summarising, we note that vulnerability to a targeted attack is a universal feature of robust networks, holding not only for preferential attachment networks but also for configuration 
models and various classes of inhomogeneous random graphs. In the case $2<\tau<3$, studying the asymptotic behaviour of the critical percolation parameter~$\pc(\eps)$ as a function 
of the proportion $\eps$  of removed vertices reveals \emph{two universality classes} of networks, distinguished by the critical exponent measuring the polynomial rate of decay of~$\pc(\eps)$ 
as $\eps\downarrow 0$. In terms of the power law exponent~$\tau$ this critical exponent equals \smash{$\frac{3-\tau}{\tau-1}$} in the case of the configuration model and the Chung-Lu model, but is 
only half this value 
in the case of preferential attachment networks and inhomogeneous random graphs with a strongly inhomogeneous kernel.  The same classification of networks has emerged in a different context 
in~\cite{DerMoeMoe12}, where it was noted that the typical distances in networks of the two classes differ by a factor of two. 
The key feature of the configuration model and the rank one inhomogeneous random graphs seems to be that the connection probability of two vertices factorises.
By contrast, the connection probabilities in preferential attachment networks have a more complex structure giving privileged nodes a 
stronger advantage.

\pagebreak[3]

\subsection{Main ideas of the proofs}\label{sec:idea}

Dereich and M\"orters \cite{DerMoe13} have shown that the (not too large) graph neighbourhood of a uniformly chosen vertex in $\graph_n$ 
can be coupled to a branching random walk on the negative halfline. Although we cannot make direct use of these coupling results in our proofs, 
it is helpful to formulate our ideas in this framework, which we briefly explain now. Points on the halfline correspond to vertices with the  origin 
representing the youngest vertex  and the distance between vertex~$i-1$ and vertex~$i$ represented as~$1/i$, so that as the
graph size increases vertices become dense everywhere on the halfline, and the limiting object becomes continuous. In this representation, 
the vertices with index  $1,\ldots, \lfloor\eps n\rfloor$, which we remove when damaging  the network, are asymptotically located to the left of the point $\log \eps$.
Hence, the graph neighbourhood of a uniformly chosen vertex in $\grapheps_n$ can be coupled to a branching random walk in which particles are killed 
(together with their offspring) when leaving the interval~$[\log\eps,0]$. As in~\cite{DerMoe13} the survival probability of this branching process is equal 
to the asymptotic relative size~$\zeta$ of the largest component. This allows us to determine, for example, the critical parameter for percolation from 
knowledge when the percolated branching process has a positive survival probability.%
\medskip%

\pagebreak[3]
It is instructive to continue the comparison of the damaged 
and undamaged networks in the setup of this branching process. In \cite{DerMoe13}, where  the undamaged network is analysed, the branching random walk 
has only \emph{one} killing boundary on the right. It turns out that on the set of survival the 
leftmost particle drifts away from the killing boundary, such that it does not feel the boundary anymore. As a consequence, the unkilled process 
carries all information needed to determine whether the killed branching random walk survives with positive probability and, therefore, whether 
the network has a giant component. The \emph{two} killing boundaries in the branching random walk describing the damaged network prevent us 
from using this analogy; every particle is exposed to the threat of absorption.
\medskip

To survive indefinitely, a genealogical line of descent has to move within the (space-time) strip \linebreak $[\log\eps,0]\times \N_0$. To understand the optimal 
strategy for survival observe that, in the network with strong preferential attachment, old vertices typically have a large degree and therefore are connected to 
many young vertices, while young vertices themselves have only a few connections. This means that in the branching random walk without killing, particles produce 
many offspring to the right, but only a few to the left. Hence, if a particle is located near the left killing boundary it is very fertile, but its 
offspring are mostly located further to the right and are therefore less fertile. A particle near the right killing boundary, however, has itself a small number 
of offspring, which then however have a good chance of being fertile. As a result, the optimal survival strategy for a particle is to have an ancestral line of 
particles whose locations are alternating between positions near the left and the right killing boundary.
This intuition is the basis for our proofs.
\medskip

Continuing more formally, for the proof of Theorem~\ref{thm:pc} it can be shown that positivity of the survival probability can be characterised in terms 
of the largest eigenvalue~$\rho_{\eps}$ of an operator that describes the spatial distribution of offspring of a given particle. More precisely, the branching 
random walk survives percolation with retention parameter~$p$ if its growth rate $p\rho_{\eps}$ exceeds the value one, so that $\pc(\eps)=1/\rho_{\eps}$. Our intuition allows us to 
guess the form  of the corresponding eigenfunction, which has its mass concentrated in two bumps near the left and right killing boundary. From this guess we obtain sufficiently 
accurate estimates for the largest eigenvalue, and therefore for the critical percolation parameter, as long as the preferential attachment effect is 
strong enough. This is the case if $\gamma\geq \frac12$, allowing us to prove Theorem~\ref{thm:pc}.
\medskip

By contrast, for $\gamma <\tfrac{1}{2}$ we know that the network is not robust, 
i.e.\ we have $\pc(0)>0$. It would be of interest to understand the behaviour of $\pc(\eps)-\pc(0)$ as $\eps \downarrow 0$. Our methods can be applied to this case, but the resulting
bounds are  very rough. The reason is that in this regime the preferential attachment is much weaker, and the intuitive idea underlying our estimates gives a less accurate picture.
\medskip

The idea of proof in Theorem~\ref{thm:distances} is based on the branching process comparison, too.  To bound the probability that two typical vertices $V$ and $W$ are connected by 
a path of length at most~$h$, we look at the expected number of such paths. This is given by the number of vertices in a ball of radius $h-1$ 
around $V$ in the graph metric, multiplied with the probability that such a vertex connects to~$W$. By our branching process heuristics, the size of the ball is 
also equal to the number of vertices in the first $h-1$  generations of the branching random walk, which can be determined as the largest eigenvalue~$\rho_{\eps}=1/\pc(\eps)$ of the associated operator to the power $h$. The probability of connecting 
any vertex with index at least $\eps n$ to $W$ is bounded by $f(m)/\eps n$, where $m$ is the maximal degree in the network. Since $m=o(n)$ by Theorem~\ref{thm:degrees}, this implies that the probability of a connection is bounded from above
by $\exp(h \log (1/\pc(\eps))- \log n +o(\log n))$ and therefore goes to zero if $h\le (1-\delta)\log n/\log (1/\pc(\eps))$, $\delta>0$, which yields the result.
%Note that this uses a bound on the maximal degree in the network, which follows easily from Theorem~\ref{thm:degrees}.
\medskip

\pagebreak[3]

Theorem~\ref{thm:degrees} is relatively soft by comparison. The independence of the indegrees of distinct vertices allows us to study them separately and we again use the continuous approximation to describe the expected empirical indegree evolution. The limit theorem for the empirical distribution itself follows from a standard concentration argument. The asymptotic result for the maximal degrees is only slightly more involved and is based on fairly standard extreme value arguments.

\subsection{Overview}

The outline of this article is as follows. We start with the main steps of the proofs in Section~2. The multitype branching process which locally approximates a connected component in the network is defined in Section~\ref{sec:BP_def} and its key properties are stated. The main part of the proof of Theorem~\ref{thm:pc} then follows in Section~\ref{sec:Linf}. The analysis of the multitype branching process is conducted in Section~\ref{sec:BP}. We study an operator associated with the process in Section~\ref{sec:A} and derive necessary and sufficient conditions for its survival in Section~\ref{sec:survival}.  Sections~\ref{sec:degrees} and \ref{sec:distances} are devoted to the study of the topology of the damaged graph. In Section~\ref{sec:degrees} the typical and maximal degree of vertices is analysed. In Section~\ref{sec:distances} typical distances are studied.  The couplings between the network and the approximating branching process that underlie our proofs
are provided in Section~\ref{sec:coup}.  We then look at model variations in~Section~5. The derivation of Theorem~\ref{thm:genf} from Theorem~\ref{thm:pc} is presented in Section~\ref{sec:Genf}. This is the only section which requires consideration of non-linear attachment rules. We finish in Section~\ref{sec:others} by studying the question of vulnerability in other network models.

\section{Connectivity and branching processes}

In this section, we restrict our attention to linear attachment rules $f(k)=\gamma k +\beta$, for $\gamma \in [0,1)$ and $\beta >0$, and let $\eps$ be a fixed value in $(0,1)$.
The goal of this section is to prove Theorem~\ref{thm:pc}. To this end, we couple the local neighbourhood of a vertex in $\grapheps_n$ to a multitype branching process. 
The branching process is introduced in Section~\ref{sec:BP_def} and Theorem~\ref{thm:pc} is deduced in Section~\ref{sec:Linf}. Properties of the branching processes which are 
needed in the analysis are proved in Section~\ref{sec:BP}. The proof of the coupling between network and branching process is deferred to Section~\ref{sec:coup}.

\subsection{The approximating branching process}\label{sec:BP_def}

Let $Z=(Z_t\colon t \ge 0)$ be a pure jump Markov process %started in zero
with generator
\[
Lg(k):=f(k)\big( g(k+1)-g(k)\big).
\]
That is, $Z$ is an increasing, integer-valued process, which jumps from $k$ to $k+1$ after an exponential waiting time with mean $1/f(k)$, independently of the previous jumps. 
We write $P$ for the distribution of $Z$ started in zero and $E$ for the corresponding expectation. By $(\hat{Z}_t\colon t\ge 0)$ we denote a version of the process started in $\hat{Z}_0=1$ under the measure $P$.
\smallskip

The process~$Z$ is used to define the offspring distributions of a multitype branching process with type space $\typespace:=[\log \eps,0]\times \{\leftp,\rightp\}$. A typical element of $\typespace$ is denoted by $\type=(\loc,\mar)$. The intuitive picture 
is that $\loc$ encodes the spatial position of the particle which we call \emph{location}. The second coordinate $\mar$ indicates which side of the particle its parent is located and we refer to $\mar$ as the \emph{mark}. The non-numerical symbols $\leftp$ and $\rightp$ stand for `left' and `right', respectively.
A particle of type $(\loc,\mar)\in \typespace$ produces offspring to its left of mark $\rightp$ with relative positions having the same distribution as those points of the Poisson point process $\Pi$ with intensity measure
\begin{equation}\label{eq:intensityPi}
e^{t} E[f(Z_{-t})]\mathbbm{1}_{(-\infty,0]}(t) \, dt= \beta e^{(1-\gamma) t} \mathbbm{1}_{(-\infty,0]}(t) \, dt
\end{equation}
that lie in $[\log \eps-\loc,0]$. 
The distribution of the offspring to the right depends on the mark of the parent. When the particle is of type $(\loc,\leftp)$, then the relative positions of the offspring follow the same distribution as the jump times of $(Z_t\colon t \in [0, -\loc])$, but when the particle is of type $(\loc,\rightp)$, then the relative positions follow the same distribution as the jump times of $(\hat{Z}_t\colon t \in [0,-\loc])$. All offspring on the right are of mark $\leftp$. Observe that the chosen offspring distributions ensure that new particles have again a location in $[\log \eps,0]$.
The offspring distribution to the right is \emph{not} a Poisson point process: The more particles are born, the higher the rate at which new particles arrive. 
\pagebreak[3]
\smallskip

We call the branching process thus constructed the \emph{idealized branching process} ($\IBP$). 
It can be interpreted as a labelled tree, where every node represents a particle and is connected to its children and (apart from the root) to its parent. We equip node $x$ 
with label $\type(x)=(\loc(x),\mar(x))$, where $\loc(x)$ denotes its location and $\mar(x)$ its mark, and write $|x|$ for the generation of $x$.
We show in Section~\ref{sec:coup} that the genealogical tree of the $\IBP$ is closely related to the neighbourhood of a uniformly chosen vertex in $\grapheps_n$. To obtain a branching process approximation to $\grapheps_n(p)$, we define the \emph{percolated $\IBP$} by associating to every offspring in the $\IBP$ an independent Bernoulli$(p)$ random variable. If the random variable is zero, we delete the offspring together with its line of descent. If it equals one, then the offspring is retained in the percolated $\IBP$.
\smallskip

Let $S^{\eps }$ be a random variable with
\begin{equation}\label{def:Seps}
{\P}(-S^{\eps}\le t)=\frac{1}{1-\eps} \big(1- e^{-t}\big), \qquad \mbox{ for } t \in [0,-\log \eps].
\end{equation}
Denote by $\zeta^{\eps}(p)$ the survival probability of the tree which is with probability $p$ equal to
the percolated $\IBP$ started with one particle of mark $\leftp$ and location $S^{\eps }$ and equals the 
empty tree otherwise.  Let $\connect_n^{\eps}(p)$ be a connected component in $\grapheps_n(p)$ of maximal size.
 
\begin{theorem}\label{thm:size_giant} For all $\eps \in (0,1)$ and $p \in (0,1]$, in probability,
\[
\frac{|\connect_n^{\eps}(p)|}{\E|\vseteps_n(p)|}  \to \zeta^{\eps}(p)/p \qquad \text{as }n\to \infty.
\]
\end{theorem}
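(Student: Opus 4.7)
The plan is to prove the theorem by the standard branching-process strategy: couple a local neighbourhood of a uniformly chosen vertex to the IBP, show that being in the giant component corresponds to IBP survival, and then promote the vertex-level statement to a law-of-large-numbers statement via a first- and second-moment argument combined with a sprinkling argument for uniqueness.

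First, choose $V_n$ uniformly from $\vseteps_n=\{\fst+1,\dots,n\}$. Observing that $\log(V_n/n)$ has the limiting density $\frac{1}{1-\eps} e^{-t}$ on $[\log\eps,0]$, one sees that the location of $V_n$ in the ``logarithmic'' coordinates agrees in distribution with $S^{\eps}$ defined in \eqref{def:Seps}. Using the coupling machinery of Section~\ref{sec:coup} (Dereich--M\"orters style), I would argue that for any fixed $r\in\N$ the ball of radius $r$ around $V_n$ in $\grapheps_n(p)$ can be coupled, with probability tending to $1$, to the first $r$ generations of the percolated IBP started from a single particle of type $(S^{\eps},\leftp)$. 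This yields
\[
\P\bigl(V_n\in\connect_n^{\eps}(p)\bigr)\ \xrightarrow[n\to\infty]{}\ \zeta^{\eps}(p),
\]
by first letting $n\to\infty$ at fixed $r,K$ (so that a vertex with $r$-ball of size $\ge K$ is close in law to IBP-survival up to level $r$) and then letting $r,K\to\infty$; the extra factor of $p$ coming from the Bernoulli thinning of $V_n$ itself accounts for the normalisation $\zeta^{\eps}(p)/p$ relative to $\E|\vseteps_n(p)|=p(n-\fst)$.

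For the concentration, define $Y_n(K):=\#\{v\in\vseteps_n(p):|C(v)|\ge K\}$, the number of retained vertices lying in a cluster of size at least~$K$. A direct application of the single-vertex coupling gives $n^{-1}\E Y_n(K)\to (1-\eps)\zeta^{\eps}(p)$ in the double limit $n\to\infty$, $K\to\infty$. For the second moment, I would pick two independent uniform vertices $V_n,W_n$ and use that with high probability their $r$-balls in $\grapheps_n(p)$ are disjoint and can be \emph{jointly} coupled to two independent copies of the percolated IBP. This yields $n^{-2}\E[Y_n(K)^2]\to[(1-\eps)\zeta^{\eps}(p)]^2$, hence $\operatorname{Var}(Y_n(K))=o(n^2)$, so $Y_n(K)/n$ converges in probability to $(1-\eps)\zeta^{\eps}(p)$.

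It remains to show that almost all of $Y_n(K)$ sits in one single connected component, which is the main obstacle and the only non-routine step. Here I would use a sprinkling argument: write the retention probability $p$ as $p_1 p_2$ with $p_1<p$ close to $p$, so that $\grapheps_n(p)$ is obtained from $\grapheps_n(p_1)$ by an additional independent $p_2$-percolation. Using the branching process comparison on $\grapheps_n(p_1)$, the number of vertices in clusters of size at least $K$ is concentrated around $(1-\eps)\zeta^{\eps}(p_1)\,n$, and each such cluster has a boundary (say of size at least of order $K$) of potential edges into old vertices. A pigeon-hole plus Chernoff estimate on the Bernoulli($p_2$) coin-flips then shows that with high probability every pair of large clusters is joined by a path through one of the high-degree old hubs (whose existence and robustness is guaranteed by the analysis of the undamaged network on $\{\lfloor\eps n\rfloor+1,\dots,n\}$), producing a unique giant component. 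Letting $p_1\uparrow p$ and $K\to\infty$ and combining with the first/second moment estimate above gives $|\connect_n^{\eps}(p)|/n\to(1-\eps)\zeta^{\eps}(p)$ in probability, and dividing by $\E|\vseteps_n(p)|/n\to p(1-\eps)$ yields the claimed limit $\zeta^{\eps}(p)/p$.
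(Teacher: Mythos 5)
Your overall strategy --- couple a uniform vertex's neighbourhood to the percolated IBP (Proposition~\ref{pro:graph_IBP}), upgrade the first-moment statement to concentration by a second-moment/variance bound (Corollary~\ref{mean_conv}, Lemmas~\ref{lem:Mconv} and~\ref{variance_lem}), and finish with a sprinkling argument to merge the large clusters --- is exactly the architecture of the paper's proof, and the first two steps of your plan are essentially sound. The key divergence is in the sprinkling step, and this is where your proposal has real problems.

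First, the paper does \emph{not} sprinkle in the percolation parameter. It sprinkles in the attachment rule: Lemma~\ref{sprinkling} couples the network built with $\underline{f}=f-\delta$ to the one built with $f$, so that $\underline{\graph}_n^{\eps}(p)\le\grapheps_n(p)$, and uses the extra edges generated by the $\delta$-boost --- which are present at essentially every vertex of every large $\underline{f}$-cluster --- to merge all clusters of size at least $2c_n$. The accompanying continuity result that makes this usable is Lemma~\ref{continuityZeta}, continuity of $\zeta^{\eps}(p,\cdot)$ in the attachment rule, which is proved via the martingale $W$. Your proposal instead sprinkles on the retention probability, and you would then need a continuity-in-$p$ analogue of Lemma~\ref{continuityZeta}, which you don't mention.

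Second, your sprinkling as stated is not internally consistent. For vertex percolation, writing $p=p_1p_2$ with $p_1<p$ forces $p_2>1$; the correct decomposition of a Bernoulli retention coin into a ``base'' coin plus an independent ``bonus'' coin uses $1-p=(1-p_1)(1-p_2)$, i.e.\ the retention event is the union, not the intersection. More importantly, the mechanism you invoke --- ``every pair of large clusters is joined by a path through one of the high-degree old hubs'' --- cannot work here: the high-degree hubs are precisely the vertices with index $\le\fst$, and those have all been deleted. Theorem~\ref{thm:degrees} shows that in $\grapheps_n$ every remaining vertex has degree $O(\log n)$, so there is no small set of hubs through which all large clusters can be funneled, and a union bound over pairs of large clusters is needed instead (as in the proof of Proposition~4.1 of~\cite{DerMoe13}, to which Lemma~\ref{sprinkling} defers). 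In short: your plan correctly identifies the three ingredients but the sprinkling step, as written, both misstates the decomposition of the percolation and relies on a structure (old hubs) that the damaged graph no longer has; the paper's attachment-rule sprinkling together with Lemma~\ref{continuityZeta} is what actually carries the lower bound.
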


The proof of Theorem~\ref{thm:size_giant} is postponed to Section~\ref{sec:coup}. The theorem offers a description of the largest component in the network in terms of the survival probability of the percolated $\IBP$. To make use of this connection, we have to understand the branching process.
\smallskip

For any bounded, measurable function $g$ on $\typespace$, and $\type \in \typespace$, let
\[
A_pg(\type):=E_{\type,p}\Big[\sum_{|x|=1}g(\loc(x),\mar(x))\Big],
\]
where the expectation $E_{\type,p}$ refers to the percolated $\IBP$ starting with a single particle of type $\type$, percolated with retention parameter~$p$.
We write $A=A_1$ for the operator corresponding to the unpercolated branching process and $E_{\type}:=E_{\type,1}$. Recall that all quantities associated with the $\IBP$, and in 
particular~$A_p$, depend on the fixed value of $\eps$. We denote by $C(\typespace)$ the complex Banach space of continuous functions on $\typespace$ equipped with the 
supremum norm. The following proposition, which summarizes properties of~$A_p$, is proved in Section~\ref{sec:A}.

\begin{proposition}\label{pro:A}
For all $\eps \in (0,1)$ and $p \in (0,1]$, the operator $A_p\colon C(\typespace) \to C(\typespace)$ is linear, strictly positive and compact with spectral radius $\rho_{\eps}(A_p) \in (0,\infty)$. Moreover, $A_p=pA$ and $\rho_{\eps}(A_p)=p\rho_{\eps}(A)$.
\end{proposition}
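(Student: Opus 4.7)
The identities $A_p = pA$ and $\rho_{\eps}(A_p)=p\rho_{\eps}(A)$ are immediate from the construction. Since in the percolated $\IBP$ each offspring is retained independently with probability~$p$, conditioning on the full offspring configuration gives
\[
A_p g(\type) = E_{\type,p}\Big[\sum_{|x|=1}g(\loc(x),\mar(x))\Big] = p\, E_{\type}\Big[\sum_{|x|=1}g(\loc(x),\mar(x))\Big] = p A g(\type),
\]
and linearity of $A_p$ is inherited from linearity of expectation. It therefore suffices to prove linearity, strict positivity, compactness, and $\rho_{\eps}(A)\in(0,\infty)$ for the unpercolated operator $A$.

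\smallskip

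The starting point is to write down an explicit integral representation of $A g$. In the affine case $f(k)=\gamma k+\beta$ the ODEs for $E[f(Z_t)]$ and $E[f(\hat Z_t)]$ are linear with solutions $\beta e^{\gamma t}$ and $(\gamma+\beta)e^{\gamma t}$, so Campbell's formula applied to the left Poisson process and to the jump times of $Z$ respectively $\hat Z$ gives
\[
A g(\loc,\mar) = \int_{\log\eps-\loc}^{0} g(\loc+t,\rightp)\, \beta e^{(1-\gamma)t}\, dt \;+\; \int_{0}^{-\loc} g(\loc+t,\leftp)\, c_{\mar}\, e^{\gamma t}\, dt,
\]
with $c_{\leftp}=\beta$ and $c_{\rightp}=\gamma+\beta$. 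Two consequences drop out at once. First, the integrands are continuous in $\loc$ and dominated by $\|g\|_\infty$ times an integrable weight, so by dominated convergence $A g\in C(\typespace)$ and $\|A\|\le C_{\eps}<\infty$, yielding finiteness of the spectral radius. Second, if $g\ge 0$ is not identically zero then it is strictly positive on some open subset of $\typespace$, and since both density factors are strictly positive on their supports we get $A g(\type)>0$ for every $\type\in\typespace$, establishing strict positivity.

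\smallskip

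For compactness I would apply the Arzel\`a-Ascoli theorem to the image $\{A g : \|g\|_\infty\le 1\}$. Uniform boundedness has just been shown. For equicontinuity, the change of variables $s=\loc+t$ rewrites the two integrals as $\int_{\log\eps}^{\loc} g(s,\rightp)\beta e^{(1-\gamma)(s-\loc)}\,ds$ and $\int_{\loc}^{0} g(s,\leftp) c_{\mar} e^{\gamma(s-\loc)}\,ds$. Splitting the difference of these expressions at two values $\loc,\loc'$ into a boundary term and a term arising from the exponential factor, both of which are uniformly bounded by $\|g\|_\infty$ times $|\loc-\loc'|$, produces a Lipschitz estimate $|Ag(\loc,\mar)-Ag(\loc',\mar)|\le K_{\eps}\|g\|_\infty |\loc-\loc'|$ with $K_{\eps}$ independent of $g$; combined with the discreteness of the mark coordinate this gives compactness of $A$ on $C(\typespace)$. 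Finally, $\rho_{\eps}(A)>0$ is supplied by the Krein-Rutman theorem for compact, strictly positive operators preserving the positive cone of $C(\typespace)$, which produces a strictly positive eigenfunction with eigenvalue equal to $\rho_{\eps}(A)$. The only step that requires genuine calculation is the equicontinuity bound, but thanks to the closed-form representation above it reduces to direct integral manipulation rather than soft analysis; the harder quantitative analysis of $\rho_{\eps}(A)$ itself is postponed to the estimates in Section~\ref{sec:A} leading to Theorem~\ref{thm:pc}.
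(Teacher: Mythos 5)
Your derivation of $A_p = pA$, your integral representation of $A$, the Arzelà--Ascoli/Lipschitz argument for compactness, and the appeal to Krein--Rutman for $\rho_{\eps}(A)>0$ all match the paper's approach. However, your argument for strict positivity contains a genuine error that the paper is specifically structured to avoid.

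You assert that if $g\ge 0$ and $g\not\equiv 0$, then $Ag(\type)>0$ for every $\type\in\typespace$. This is false, and the reason is precisely the mark structure. Reading off from the integral representation,
\[
Ag(\loc,\mar) = \int_{\log\eps-\loc}^{0} g(\loc+t,\rightp)\, \beta e^{(1-\gamma)t}\, dt \;+\; \int_{0}^{-\loc} g(\loc+t,\leftp)\, a_{\mar}\, e^{\gamma t}\, dt,
\]
the operator only ever sees $g(\cdot,\rightp)$ at positions $\le \loc$ (the left offspring all carry mark $\rightp$) and $g(\cdot,\leftp)$ at positions $\ge\loc$ (the right offspring all carry mark $\leftp$). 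So if, for example, $g(\cdot,\rightp)\equiv 0$, then $Ag(0,\mar)=\int_{\log\eps}^{0}g(t,\rightp)\beta e^{(1-\gamma)t}\,dt=0$ for both marks $\mar$; symmetrically, if $g(\cdot,\leftp)\equiv 0$ then $Ag(\log\eps,\mar)=0$. More generally, a nonnegative $g$ supported on a single mark and a proper subinterval forces $Ag$ to vanish on an interval of positions. This is exactly why the paper's Lemma~\ref{lem:pos} proves strict positivity of $A^2$, not of $A$: the second application is what "mixes" the mark coordinate, since any one-step offspring acquires a mark determined by the side on which it is born, and only two steps guarantee that every target interval of either mark is reachable with uniformly positive probability. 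Krein--Rutman (as invoked via Proposition~\ref{pro:decom}, which is stated under a hypothesis satisfied by the eventually strongly positive operator) then applies exactly as you intend, but the correct input is $A^2\mathbf{1}_{\{g\ge 0,\,g\not\equiv 0\}}>0$, not $A$. The rest of your proof is sound; it only needs this one step repaired by replacing your positivity claim with the two-step version.
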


We have the following characterization of the survival probability of the percolated $\IBP$.

\begin{theorem}\label{survival_vs_spectral}
For all $\eps \in (0,1)$ and $p \in (0,1]$
\[
\zeta^{\eps}(p)>0 \qquad \Leftrightarrow \qquad \rho_{\eps}(A_p)>1.
\]
\end{theorem}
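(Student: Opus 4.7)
The plan is to apply the standard correspondence between survival of a multitype branching process and the principal eigenvalue of its mean offspring operator exceeding one. By Proposition~\ref{pro:A} the operator $A_p$ is compact, linear and strictly positive on $C(\typespace)$, with $\rho:=\rho_\eps(A_p)\in(0,\infty)$. The Krein-Rutman theorem applied to $A_p$ therefore yields a strictly positive principal eigenfunction $\eigenfct\in C(\typespace)$ with $A_p\eigenfct=\rho\,\eigenfct$. Since $\typespace$ is compact and $\eigenfct$ is continuous and strictly positive, there exist $0<c\le C<\infty$ with $c\le\eigenfct\le C$ on $\typespace$. The central objects of the proof are then the non-negative martingale $W_n:=\rho^{-n}\sum_{|x|=n}\eigenfct(\type(x))$ under $E_{\type,p}$ and the generation sizes $N_n:=|\{x:|x|=n\}|$.

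For the implication $\zeta^{\eps}(p)>0\Rightarrow\rho>1$, I would argue by contraposition and assume $\rho\le 1$. The bound $E_{\type,p}[N_n]\le c^{-1}A_p^n\eigenfct(\type)=c^{-1}\rho^n\eigenfct(\type)\le c^{-1}C$ holds uniformly in $\type$. When $\rho<1$ the right-hand side decays exponentially, so extinction follows from Markov's inequality. When $\rho=1$, the martingale $W_n$ converges almost surely; since the offspring laws of the $\IBP$ have a continuous component (a truncated Poisson point process on the left, and the jump times of the linear birth process $Z$ or $\hat Z$ on the right) they are genuinely random, and strict positivity of $A_p$ supplies the irreducibility needed for the classical critical multitype argument to produce extinction almost surely. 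In either case $\zeta^{\eps}(p)=0$.

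For the reverse direction, suppose $\rho>1$, and let $\tilde\zeta^{\eps}(p,\type)$ denote the survival probability of the percolated $\IBP$ started from a single particle of type $\type$. The plan is to prove $W_n\to W_\infty$ in $L^1$ under $P_{\type,p}$, which forces $E_{\type,p}[W_\infty]=\eigenfct(\type)>0$, and hence $P_{\type,p}(W_\infty>0)>0$. Since $W_\infty>0$ implies non-extinction, this yields $\tilde\zeta^{\eps}(p,\type)>0$ for every $\type\in\typespace$. Because the starting location $S^{\eps}$ has positive density on $[\log\eps,0]$, the randomised initial particle used in the definition of $\zeta^{\eps}(p)$ then also has positive survival probability, and we conclude $\zeta^{\eps}(p)>0$. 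The $L^1$-convergence can be obtained either via a Kesten-Stigum $L\log L$ condition, or by the more direct route of verifying that $E_{\type,p}[W_n^2]$ stays bounded uniformly in $n$ and $\type$ and then applying the Paley-Zygmund inequality to pass from a positive first moment to positive survival.

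The main obstacle is thus the moment bound in the supercritical case, but this is tractable because the $\IBP$ offspring laws are benign: the left-offspring is a Poisson point process on a bounded interval with finite intensity $\beta e^{(1-\gamma)t}$ and therefore has exponential moments, while the right-offspring is the set of jump times of $Z$ or $\hat Z$ over a bounded time interval, which has all moments finite since $f(k)=\gamma k+\beta$ grows only linearly. A standard second-moment computation, decomposing $W_{n+1}$ over the first-generation offspring, then yields a recursion of the shape $\mathrm{Var}_{\type,p}(W_{n+1})\le O(\rho^{-1})\sup_{\type'}\mathrm{Var}_{\type',p}(W_n)+O(1)$, which iterates to a uniform-in-$n$ bound precisely because $\rho>1$, closing the argument.
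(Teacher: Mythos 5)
Your proposal follows essentially the same route as the paper: for $\rho_\eps(A_p)\le 1$, bound the expected generation sizes and conclude extinction; for $\rho_\eps(A_p)>1$, prove that the additive martingale $W_n=\rho^{-n}\sum_{|x|=n}\eigenfct(\type(x))$ has a nondegenerate limit and conclude positive survival from every starting type, then randomize the starting location. For the supercritical direction the paper cites Biggins--Kyprianou (Theorem~1.1), whose hypotheses it checks via the uniform $L^2$ bound of Lemma~\ref{lem:L2}; that corresponds to your first alternative (Kesten--Stigum). So the architecture is the same.

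Two places need tightening. First, in the case $\rho_\eps(A_p)=1$ you invoke "the classical critical multitype argument" but do not say what it is. The type space here is a continuum, so the finite-type machinery is not directly applicable; the ingredient that is actually needed is the zero--infinity dichotomy $P_{\type,p}(1\le|\IBP_n|\le N\text{ i.o.})=0$ for every $N$, which rests on the uniform bound $\inf_\type P_{\type,p}(|\IBP_1|=0)>0$. The paper proves this separately as Lemma~\ref{lem:noBound} and uses it to cover $\rho\le1$ in one stroke (Fatou plus bounded expectation rules out the $|\IBP_n|\to\infty$ alternative); you should do the same rather than appeal to a classical result that does not apply as stated. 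Second, your self-contained second-moment recursion as written does not close. Decomposing $W_{n+1}$ over the first generation yields
\[
\mathrm{Var}_{\type,p}(W_{n+1})=\mathrm{Var}_{\type,p}(W_1)+\rho^{-2}\,E_{\type,p}\Big[\sum_{|x|=1}\mathrm{Var}_{\type(x),p}(W_n)\Big],
\]
so after crudely replacing the inner variance by $\sup_{\type'}\mathrm{Var}_{\type',p}(W_n)$ the coefficient is $\rho^{-2}\sup_\type E_{\type,p}[|\IBP_1|]$, a fixed constant times $\rho^{-2}$ that need not be below one when $\rho$ is only slightly above $1$. The statement "iterates to a uniform bound precisely because $\rho>1$" therefore does not follow from the recursion you wrote. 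It can be rescued either by keeping the operator form $v_{n+1}=v_1+\rho^{-2}A_p\,v_n$ (where $v_n(\type)=\mathrm{Var}_{\type,p}(W_n)$), since $\rho^{-2}A_p$ has spectral radius $\rho^{-1}<1$, or more simply by conditioning on $\mathcal{F}_n$ instead, which gives $\mathrm{Var}_{\type,p}(W_{n+1})\le\mathrm{Var}_{\type,p}(W_n)+O(\rho^{-n})$ with a summable remainder; either fix restores the argument for all $\rho>1$. With these two corrections the proposal matches the paper's proof.
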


Theorem~\ref{survival_vs_spectral} is proved in Section~\ref{sec:survival}. Combined with Theorem~\ref{thm:size_giant} and Proposition~\ref{pro:A} it gives a characterisation of the critical percolation parameter for $(\grapheps_n(p)\colon n\in\N)$.

\begin{corollary}
The network $(\grapheps_n(p) \colon n\in\N)$ has a giant component if and only if $p>\rho_{\eps}(A)^{-1}$.
\end{corollary}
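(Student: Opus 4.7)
The plan is to concatenate the three results already established in this section: Theorem~\ref{thm:size_giant}, Theorem~\ref{survival_vs_spectral}, and Proposition~\ref{pro:A}. The corollary is essentially a matter of unwinding the definition of a giant component and chaining the characterisations of survival and of the spectral radius.

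First I would recall from Section~1.2 that the sequence $(\grapheps_n(p) \colon n \in \N)$ has a giant component precisely when the in-probability limit of $|\connect_n^{\eps}(p)|/\E|\vseteps_n(p)|$ is strictly positive. By Theorem~\ref{thm:size_giant}, this limit exists and equals $\zeta^{\eps}(p)/p$, so for $p \in (0,1]$ the giant component exists if and only if the survival probability $\zeta^{\eps}(p)$ is strictly positive. The boundary case $p=0$ is trivial: no edges are retained, so no giant component arises, and correspondingly $0 > \rho_{\eps}(A)^{-1}$ fails since Proposition~\ref{pro:A} guarantees $\rho_{\eps}(A) \in (0,\infty)$.

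Second, Theorem~\ref{survival_vs_spectral} translates the survival statement into the spectral condition $\rho_{\eps}(A_p) > 1$. Finally, Proposition~\ref{pro:A} asserts $A_p = pA$, hence $\rho_{\eps}(A_p) = p \, \rho_{\eps}(A)$, so the spectral condition becomes $p\,\rho_{\eps}(A) > 1$, i.e.\ $p > \rho_{\eps}(A)^{-1}$. Stringing the equivalences together yields the claim.

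I do not anticipate any substantive obstacle, since all the analytic and probabilistic content has been absorbed into the three earlier results. The only housekeeping is to observe that the formulation correctly accommodates the case $\rho_{\eps}(A)^{-1} \geq 1$: then no $p \in [0,1]$ satisfies the strict inequality and no giant component exists for any retention parameter, which is in agreement with the allowance in Theorem~\ref{thm:pc} that $\pc(\eps)$ may equal $1$ for $\eps$ not sufficiently small.
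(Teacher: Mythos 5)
Your proposal is correct and follows exactly the route the paper intends: the paper offers no separate proof for this corollary, stating only that it is obtained by combining Theorem~\ref{thm:size_giant}, Theorem~\ref{survival_vs_spectral}, and Proposition~\ref{pro:A}, which is precisely the chain of equivalences you spell out. Your handling of the boundary cases ($p=0$ and $\rho_{\eps}(A)^{-1}\geq 1$) is sound housekeeping and does not depart from the paper's argument.
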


Notice that the corollary implies that $(\grapheps_n\colon n\in\N)$ has no giant component when $\rho_{\eps}(A)\le 1$. Moreover, the first statement of Theorem~\ref{thm:pc} 
follows from the corollary by taking $\pc(\eps)=\rho_{\eps}(A)^{-1}\land 1$.
\smallskip

To complete the proof of Theorem~\ref{thm:pc}, it remains to estimate the spectral radius $\rho_{\eps}(A)$. We use that 
(see, e.g., Theorem~45.1 in \cite{Heu82}) for a linear and bounded operator $A$ on a complex Banach space
\begin{equation}\label{eq:spectral}
\rho_{\eps}(A)=\lim_{n\to \infty} \|A^n\|^{\frac{1}{n}} =\inf\{\|A^n\|^{\frac{1}{n}}\colon n \in \N\}.
\end{equation}
By the definition of the Poisson point process $\Pi$ in \eqref{eq:intensityPi}, the intensity measure of $\Pi$ equals 
\[
\mathbbm{1}_{(-\infty,0]}(t)\, M(dt),\qquad \mbox{ for } M(dt):=\beta e^{(1-\gamma)t} \, dt.
\] 
We denote by $\Pi^{\leftp}$ the point process given by the jump times of $(Z_t\colon t \ge 0)$ and by $\Pi^{\rightp}$ the point process given by the jump times of $(\hat{Z}_t\colon t\ge 0)$.
A simple computation (cf.\ Lemma~1.12 in \cite{DerMoe13}) shows that with $M^{\mar}(dt):=a_{\mar}e^{\gamma t}\, dt$, where $a_{\leftp}=\beta $ and $a_{\rightp}=\gamma +\beta$, the intensity measure of $\Pi^{\mar}$ is given by $\mathbbm{1}_{[0,\infty)}(t)M^{\mar}(dt)$ for $\mar \in \{\leftp,\rightp\}$. Hence, for any bounded, measurable function $g$ on $\typespace$ and $(\loc,\mar) \in \typespace$, 
%denoting by $E_{(\loc,\mar)}$ expectation with respect to an IBP started with a single particle of type~$(\loc,\mar)$,
\begin{equation}\label{eq:A_explicit}
Ag(\loc,\mar)=E_{(\loc,\mar)}\Big[\sum_{|x|=1} g(\loc(x),\mar(x))\Big]
%&= E\Big[ \sum_{\heap{\p \in \Pi}{\log \eps -\loc \le \p}} g(\loc+\p,\rightp) +\sum_{\heap{\q \in \Pi^{\mar}}{\q \le -\loc}} g(\loc+\q,\leftp)\Big]\\
%&= \int_{\log \eps -\loc}^0 g(\loc+t,\rightp) \, M(dt) +\int_0^{-\loc} g(\loc+t,\leftp)\, M^{\mar}(dt)\\
= \int_{\log \eps -\loc}^0 g(\loc+t,\rightp) \beta e^{(1-\gamma)t}\,dt +\int_0^{-\loc} g(\loc+t,\leftp) a_{\mar} e^{\gamma t}\,dt.
\end{equation}

\subsection{Proof of Theorem~\ref{thm:pc}}\label{sec:Linf}

Subject to the considerations of the previous section, Theorem~\ref{thm:pc} follows from the following proposition.

\begin{proposition}\label{pro:pc}
\begin{itemize}
\item[(a)] If $\gamma = \frac{1}{2}$, then
$\displaystyle
\frac{1}{\gamma+\beta} \frac{1}{\log(1/\eps)} \le \rho_{\eps}(A)^{-1} \le \frac{1}{\beta} \frac{1}{\log(1/\eps)}.$
\item[(b)] If $\gamma >\frac{1}{2}$, then
\[
\Big(1+\log (\eps^{1-2\gamma})\eps^{\gamma-1/2}+\big[\log (\eps^{1-2\gamma})\eps^{\gamma-1/2}\big]^2\Big)^{-1/2} \le \sfrac{\sqrt{\beta (\gamma+\beta)}}{2\gamma-1} \eps^{-\gamma+1/2} \rho_{\eps}(A)^{-1} \le \big(1-\eps^{\gamma-1/2}\big)^{-1}.
\]
\end{itemize}
\end{proposition}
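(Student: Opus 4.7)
The plan is to apply the Collatz--Wielandt principle for the compact positive operator $A$: for any continuous strictly positive $g$ on $\typespace$,
\[
\inf_{\type\in\typespace}\frac{(Ag)(\type)}{g(\type)} \;\le\; \rho_\eps(A) \;\le\; \sup_{\type\in\typespace}\frac{(Ag)(\type)}{g(\type)}.
\]
Each of the four bounds in the proposition will come from supplying a well-chosen test function $g$ and computing $Ag$ via the explicit formula~\eqref{eq:A_explicit}; the strict positivity and compactness of $A$ needed here are provided by Proposition~\ref{pro:A}.

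For part~(a), where $\gamma=\tfrac12$, I would use the mark-independent test function $g(\loc,\mar) = e^{-\loc/2}$. The point is that both exponents $(1-\gamma)-\tfrac12$ and $\gamma-\tfrac12$ inside the integrals of~\eqref{eq:A_explicit} vanish at $\gamma=\tfrac12$, so the computation collapses to
\[
(Ag)(\loc,\mar) \;=\; \bigl[\beta(\loc-\log\eps) - a_\mar\loc\bigr]\,e^{-\loc/2} \;=\; \bigl[\beta\log(1/\eps) + (\beta-a_\mar)\loc\bigr]\,e^{-\loc/2}.
\]
Since $a_\leftp = \beta$ and $a_\rightp = \gamma+\beta$, the ratio $(Ag)/g$ is identically $\beta\log(1/\eps)$ on $\{\mar=\leftp\}$, and equals $\beta\log(1/\eps)-\gamma\loc$ on $\{\mar=\rightp\}$, the latter ranging over $[\beta\log(1/\eps),(\gamma+\beta)\log(1/\eps)]$ as $\loc$ runs through $[\log\eps,0]$. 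Inverting the Collatz--Wielandt inequalities then gives both statements of~(a) at once.

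For part~(b), where $\gamma>\tfrac12$, the single exponential is no longer balanced: the polynomial rate $\eps^{-(\gamma-1/2)}$ and the prefactor $\sqrt{\beta(\gamma+\beta)}$ reflect a geometric-mean Perron balance between the two marks and suggest a genuine two-component eigenfunction. My plan is first to compute the Perron eigenfunction of $A$ up to explicit error control, and then to use suitable perturbations of it as test functions for Collatz--Wielandt. Writing $u(\loc):=g(\loc,\leftp)$ and $v(\loc):=g(\loc,\rightp)$ and introducing the antiderivatives
\[
U(\loc) := \int_\loc^0 u(s)\,e^{\gamma s}\,ds, \qquad V(\loc) := \int_{\log\eps}^\loc v(s)\,e^{(1-\gamma)s}\,ds,
\]
the eigenvalue equation $Ag=\rho g$ (which in particular forces $v-u = (\gamma/\rho)J(u)$, so $U$ and $V$ encode the full information) reduces after differentiation to the coupled linear system
\[
-\rho U'(\loc) = \beta U(\loc) + \beta e^{(2\gamma-1)\loc}V(\loc), \qquad \rho V'(\loc) = \beta V(\loc) + (\gamma+\beta)e^{(1-2\gamma)\loc}U(\loc),
\]
with boundary conditions $U(0)=0$ and $V(\log\eps)=0$. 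Exponential ansätze $U\propto e^{\lambda\loc}$, $V\propto e^{\mu\loc}$ force $\lambda=\mu+(2\gamma-1)$ and the characteristic equation $\mu^2+(2\gamma-1)\mu = \beta(2\gamma-1)/\rho-\beta\gamma/\rho^2$ with two roots $\mu_\pm$; imposing both boundary conditions on the resulting two-parameter family of solutions yields the transcendental eigenvalue relation
\[
\eps^{\mu_+-\mu_-} \;=\; \frac{\rho\mu_+-\beta}{\rho\mu_--\beta}.
\]
Asymptotic analysis on the distinguished branch (where $\mu_+\approx \beta/\rho$ is tiny and $\mu_-\approx -(2\gamma-1)$, so $\mu_+-\mu_-\approx 2\gamma-1$) gives $\rho\sim\tfrac{\sqrt{\beta(\gamma+\beta)}}{2\gamma-1}\eps^{-(\gamma-1/2)}$ at leading order, matching the target.

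The hard part will be converting this leading-order identity into the two sharp sandwich bounds stated, with error factors $(1-\eps^{\gamma-1/2})^{-1}$ on one side and $\bigl(1+(2\gamma-1)\log(1/\eps)\eps^{\gamma-1/2}+[(2\gamma-1)\log(1/\eps)\eps^{\gamma-1/2}]^2\bigr)^{-1/2}$ on the other. My plan is to replace the exact Perron eigenfunction by explicit test functions of the form $g(\loc,\mar) = A_\mar e^{\mu_+\loc}+B_\mar e^{\mu_-\loc}$, with the ratios $A_\mar/B_\mar$ and $A_\rightp/A_\leftp$ pinned to the approximate eigenvector so that $Ag$ equals a prescribed $\rho g$ up to a boundary residual of definite sign; the upper and lower Collatz--Wielandt bounds then come from perturbing the tentative $\rho$ in opposite directions. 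The logarithmic correction arises from writing $\eps^{\mu_+-\mu_-}=\eps^{2\gamma-1}\,e^{2\mu_+\log\eps}$ with $2\mu_+\log\eps$ of order exactly $(2\gamma-1)\log(1/\eps)\eps^{\gamma-1/2}$: keeping one term of the Taylor expansion of this exponential yields the factor $(1-\eps^{\gamma-1/2})$ on the lower side, and keeping two terms yields the quadratic factor on the upper side, the asymmetry reflecting the sign of the boundary residual on each side of the sandwich.
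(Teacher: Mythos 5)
Your part~(a) is correct and complete: applying the Collatz--Wielandt bounds
$\inf_{\type\in\typespace}(Ag)(\type)/g(\type)\le\rho_\eps(A)\le\sup_{\type\in\typespace}(Ag)(\type)/g(\type)$
(valid since $A$ is strictly positive and compact by Proposition~\ref{pro:A}) to the test function $g(\loc,\mar)=e^{-\loc/2}$ and evaluating via~\eqref{eq:A_explicit} gives $(Ag)/g$ ranging over exactly $[\beta\log(1/\eps),(\gamma+\beta)\log(1/\eps)]$, which inverts to the stated sandwich. This is essentially the same observation the paper makes --- they sandwich $A$ between scalar multiples of a mark-blind operator $\Asimple$ for which $e^{-\loc/2}$ is an exact eigenfunction --- but your Collatz--Wielandt packaging is a little cleaner.

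For part~(b) you take a genuinely different route from the paper, which estimates $\|A^{2n}\|$ by a combinatorial decomposition of two-step genealogies (the operators $B_1,\dots,B_4$) on one side and guesses a two-bump test function $g_{\rm e}$ on the other. Your reduction of the eigenvalue equation to the $2\times 2$ ODE system for the antiderivatives $U,V$ is correct, as are the constraint $\lambda=\mu+(2\gamma-1)$, the characteristic equation $\mu^2+(2\gamma-1)\mu=\beta(2\gamma-1)/\rho-\beta\gamma/\rho^2$, the boundary conditions $U(0)=0$, $V(\log\eps)=0$, the transcendental relation $\eps^{\mu_+-\mu_-}=(\rho\mu_+-\beta)/(\rho\mu_--\beta)$, and the leading-order asymptotics (indeed $\mu_+=\beta/\rho-\beta(\gamma+\beta)/((2\gamma-1)\rho^2)+O(\rho^{-3})$, so $\rho\mu_+-\beta\approx-\beta(\gamma+\beta)/((2\gamma-1)\rho)$ is how $\gamma+\beta$ enters, yielding $\rho\sim\sqrt{\beta(\gamma+\beta)}\,\eps^{-(\gamma-1/2)}/(2\gamma-1)$). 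However, the statement requires the explicit, non-asymptotic sandwich with error factors $(1-\eps^{\gamma-1/2})^{-1}$ on one side and $\bigl(1+x+x^2\bigr)^{-1/2}$ with $x=(2\gamma-1)\log(1/\eps)\,\eps^{\gamma-1/2}$ on the other, and here the proof stops at a plan: you say ``the hard part will be'' converting the leading order into these bounds and describe what you would do without doing it. Two steps remain genuinely open: (i) an argument that the Perron eigenvalue is the solution of your transcendental relation on the distinguished branch you isolate, rather than some other root; (ii) the explicit Collatz--Wielandt verification with your trial functions $A_\mar e^{\mu_+\loc}+B_\mar e^{\mu_-\loc}$, including a check that they are strictly positive on all of $\typespace$ and a controlled, sign-definite boundary residual producing precisely the factors $(1-\eps^{\gamma-1/2})$ and $(1+x+x^2)^{1/2}$. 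As it stands, it is not demonstrated --- and not obvious --- that Taylor-expanding your transcendental relation to one or two terms reproduces these particular expressions, which in the paper arise from different computations. So part~(b) is an appealing alternative strategy with a correct setup, but not yet a proof.
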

\smallskip

\begin{proof}[Proof of Proposition~\ref{pro:pc}\,(a)]
For $h_0 \in C([\log \eps,0])$, the space of continuous functions on $[\log \eps,0]$, let
\[
\Asimple h_0(\loc):=\int_{\log\eps-\loc}^0 h_0(\loc+t) e^{t/2} \, dt+\int_0^{-\loc} h_0(\loc+t) e^{t/2} \, dt\qquad \mbox{ for } \, \loc\in [\log \eps,0].
\]
Defining $h(\loc,\mar):=h_0(\loc)$ for all $(\loc,\mar) \in \typespace$, equation \eqref{eq:A_explicit} yields
\[
\beta \Asimple h_0(\loc) \le A h(\loc,\mar) \le (\gamma+\beta) \Asimple h_0(\loc) \qquad \mbox{ if }  h_0 \ge 0, (\loc,\mar) \in \typespace.
\]
In particular, by the monotonicity and linearity of $A$ and $\Asimple$,
\[
\beta^n \Asimple^n\mathbf{1}(\loc) \le A^n\mathbf{1}(\loc,\mar) \le (\gamma+\beta)^n \Asimple^n\mathbf{1}(\loc) \qquad \mbox{ for }  \, (\loc,\mar)\in \typespace, n \in \N,
\]
where $\mathbf{1}$ denotes the constant function with value $1$. From~\eqref{eq:spectral}, we obtain 
\[
\rho_{\eps}(A) \in [\beta\rho_{\eps}(\Asimple), (\gamma+\beta)\rho_{\eps}(\Asimple)].
\]
To complete the proof it suffices to show that $\rho_{\eps}(\Asimple)=\log(1/\eps)$, which we can achieve by `guessing' the 
principal eigenfunction of $\Asimple$. Indeed, the result follows from (\ref{eq:spectral}) and
\[ 
\Asimple^{n+1}\mathbf{1}(\loc)=2(1-\eps^{1/2})(\log(1/\eps))^{n} e^{-\loc/2} \qquad \mbox{ for }  \, \loc\in [\log\eps,0], n \in \N_0.
\]
We show the latter identity by induction over $n$. For $n=0$,
\[
\Asimple\mathbf{1}(\loc)=\int_{\log\eps-\loc}^0 e^{t/2}\, dt+\int_0^{-\loc} e^{t/2} \, dt=2(1-e^{-\loc/2}\eps^{1/2}+e^{-\loc/2}-1)=2(1-\eps^{1/2})e^{-\loc/2}.
\]
Moreover, with $h_0(\loc):=e^{-\loc/2}$ we have
\[
\Asimple h_0(\loc)=\int_{\log\eps-\loc}^0 e^{-(\loc+t)/2}e^{t/2} \, dt +\int_0^{-\loc} e^{-(\loc+t)/2} e^{t/2} \, dt= e^{-\loc/2} \log(1/\eps).
\]
Thus, $\loc \mapsto e^{-\loc/2}$ is an eigenfunction of $\Asimple$ with eigenvalue $\log(1/\eps)$ and the proof is complete.
\end{proof}

\begin{proof}[Proof of lower bound in Proposition~\ref{pro:pc}\,(b)]
We analyse the ancestral lines of particles in the branching process.  Going back two steps in the ancestral line of every particle alive
we can divide the population in four groups depending on the positioning of children relative to their parents: (1) in both steps the child is to the left of its parent, (2) in the first step the child
is to the left and in the second it is to the right of its parent, (3) first right, then left, (4) in both steps the child is to the right of its parent.
The cases are depicted in Figure~3.
\begin{center}
\begin{tikzpicture}

\foreach \i in {1, 2, 3, 4}
{\draw (4*\i-4,0) -- (4*\i-1,0);
\node at (4*\i-2.5,-0.5) {$B_{\i}$};
\node at (4*\i-4,-0.25) {\footnotesize{$\log \eps$}};
\node at (4*\i-1,-0.25) {\footnotesize{$0$}};
\draw (4*\i-4,0) -- (4*\i-4,0.1);
\draw (4*\i-1,0) -- (4*\i-1,0.1);
}

\draw[->, >=stealth] (2.5,0) to [bend right=35] (1.5,0);
\draw[->, >=stealth] (1.5,0) to [bend right=35] (0.5,0);

\draw[->, >=stealth] (5.5,0) to [bend right=25] (4.5,0);
\draw[->, >=stealth] (4.5,0) to [bend left=35] (6.5,0);

\draw[->, >=stealth] (9.5,0) to [bend left=25] (10.5,0);
\draw[->, >=stealth] (10.5,0) to [bend right=35] (8.5,0);

\draw[->, >=stealth] (12.5,0) to [bend left=35] (13.5,0);
\draw[->, >=stealth] (13.5,0) to [bend left=35] (14.5,0);

\end{tikzpicture}
{\small{{\bf{Figure 3:}} Possible genealogy of a particle contributing to the respective operators.}}
\end{center}

We denote by $B_i$, $i \in \{1,\ldots,4\}$, the operators corresponding to the four scenarios. Using the point processes $\Pi$, $\Pi^{\leftp}$ and $\Pi^{\rightp}$ this means,
for any bounded, measurable function $g$ on $\typespace$ and $(\loc,\mar) \in \typespace$,
\begin{align*}
B_1g(\loc,\mar)&:=\E\Big[\sum_{\heap{\p \in \Pi}{ \log \eps-\loc \le \p}}\sum_{\heap{\q \in \Pi}{\log\eps -\loc-\p\le \q}} g(\loc+\p+\q,\rightp)\Big], &
B_2g(\loc,\mar)&:=\E\Big[\sum_{\heap{\p \in \Pi}{\log \eps-\loc \le \p}}\sum_{\heap{\q \in \Pi^{\rightp}}{\q\le -(\loc+\p)}} g(\loc+\p+\q,\leftp)\Big],\\
B_3g(\loc,\mar)&:=\E\Big[\sum_{\heap{\p \in \Pi^{\mar}}{\p\le -\loc}}\sum_{\heap{\q \in \Pi}{\log\eps -\loc-\p\le \q}} g(\loc+\p+\q,\rightp)\Big], &
B_4g(\loc,\mar)&:=\E\Big[\sum_{\heap{\p \in \Pi^{\mar}}{\p\le -\loc}}\sum_{\heap{\q \in \Pi^{\leftp}}{\q\le -(\loc+\p)}} g(\loc+\p+\q,\leftp)\Big].
\end{align*}
Intuitively, going back the ancestral line of a typical particle in the population at a late time, for a few generations the ancestral particles 
may be in group~(4),  because of the high fertility of particles positioned near the left boundary of $[\log \eps,0]$. But this behaviour is not sustainable, as 
after a few generations in this group the offspring particle will typically be near the right end of the interval and will therefore
be pushed into the killing boundary  so that it is likely to die out. Over a longer period the ancestral particles are much more likely to be
in groups (2) and (3), as this behaviour is sustainable over long periods when the ancestral line is hopping more or less regularly between 
positions near the left and the right boundary of the interval~$[\log \eps,0]$. Our aim is now to turn this heuristics into useful bounds on high 
iterates of the operator~$A$.
\smallskip

It is useful to understand how the operators $B_i$ act on the constant function $\mathbf{1}$ as wells as on the functions $g_1(\loc,\mar):=e^{-\gamma \loc}$ and 
$g_2(\loc,\mar):=e^{-(1-\gamma)\loc}$. We can write 
\[
B_3g(\loc,\mar)=\int_0^{-\loc} \int_{\log \eps -\loc-t}^0 g(\loc+t+s,\rightp) \, M(ds)\, M^{\mar}(dt),
\]
where $M(dt)=\beta e^{(1-\gamma)t}\,dt$ and $M^{\mar}(dt)=a_{\mar} e^{\gamma t} \, dt$ with $a_{\mar}\le \gamma +\beta$
are the intensity measures of the point processes $\Pi$ and $\Pi^{\mar}$. From this we obtain, for $(\loc,\mar) \in \typespace$,
\begin{align*}
&B_3 \mathbf{1}(\loc,\mar)\le \int_{-\infty}^{-\loc} \int_{-\infty}^{0} \, M(ds)\, M^{\rightp}(dt)= \frac{\beta (\gamma+\beta)}{\gamma (1-\gamma)} e^{-\gamma \loc},\\
&B_3 g_1(\loc,\mar)\le \int_{-\infty}^{-\loc} \int_{\log \eps -t-\loc}^{\infty} e^{-\gamma(\loc+t+s)} \, M(ds)\, M^{\rightp}(dt)= \frac{\beta (\gamma+\beta)}{(2\gamma-1)^2} \eps^{1-2\gamma} e^{-\gamma \loc},\\
&B_3 g_2(\loc,\mar)\le \int_{-\infty}^{-\loc} \int_{\log \eps}^{0} e^{-(1-\gamma)(\loc+t+s)} \, M(ds)\, M^{\rightp}(dt)= \frac{\beta (\gamma+\beta)}{2\gamma-1} \log(1/\eps) e^{-\gamma \loc}.
\end{align*}
Moreover, similarly elementary calculations for $B_1$, $B_2$ and $B_4$ imply
\begin{align*}
B_1 \mathbf{1}(\loc,\mar)\le \sfrac{\beta^2}{(1-\gamma)^2},\quad
B_2 \mathbf{1}(\loc,\mar)\le\sfrac{\beta (\gamma+\beta)}{\gamma (2\gamma-1)} \eps^{1-2\gamma} e^{-(1-\gamma)\loc},\quad
B_4 \mathbf{1}(\loc,\mar)\le\sfrac{\beta (\gamma+\beta)}{\gamma} \log(1/\eps) e^{-\gamma \loc};
\end{align*}
and
\begin{align*}
& B_1 g_1(\loc,\mar)\le \sfrac{\beta^2}{2\gamma-1}\log(1/\eps) \eps^{1-2\gamma} e^{-(1-\gamma)\loc},
& &B_1 g_2(\loc,\mar)\le \beta^2 (\log \eps)^2 e^{-(1-\gamma)\loc},\\
& B_2 g_1(\loc,\mar)\le \sfrac{\beta (\gamma+\beta)}{2\gamma-1} \log(1/\eps)\eps^{1-2\gamma} e^{-(1-\gamma)\loc},
& &B_2 g_2(\loc,\mar)\le \sfrac{\beta (\gamma+\beta)}{(2\gamma-1)^2} \eps^{1-2\gamma} e^{-(1-\gamma)\loc},\\
& B_4 g_1(\loc,\mar)\le \beta (\gamma+\beta)(\log \eps)^2 e^{-\gamma \loc},
& &B_4 g_2(\loc,\mar)\le \sfrac{\beta (\gamma+\beta)}{2\gamma-1} \log(1/\eps) e^{-\gamma \loc}.
\end{align*}
Summarising, there exists $C_{\eps}>0$ such that $B_i\mathbf{1}(\type) \le C_{\eps} g_1(\type)$ for all $i \in \{1,\ldots,4\}$, $\type \in \typespace$,
and denoting
\[
b_{\rm sm}:=b_1:=b_4:= \beta (\gamma+\beta) (\log\eps)^2, \qquad b_{\rm bg}:=b_2:=b_3:= \tfrac{\beta (\gamma+\beta)}{(2\gamma-1)^2}\eps^{1-2\gamma},
\]
where bg stands for `big' and sm for `small', we have 
\begin{align*}
B_ig_1(\type)&\le b_{\rm bg} \log (\eps^{1-2\gamma})g_2(\type),\quad & B_ig_2(\type)&\le b_i g_2(\type) \quad &&\text{for }i\in \{1,2\}, \\
B_ig_1(\type)&\le b_i g_1(\type),\quad &B_ig_2(\type)&\le b_{\rm bg} \log (\eps^{1-2\gamma})\eps^{2\gamma-1} g_1(\type)\quad  &&\text{for }i\in \{3,4\}.
\end{align*}
Using that by definition $A^2=\sum_{i=1}^4 B_i$, our estimate for $B_i \mathbf{1}$ yields
\begin{equation}\label{1stAest}
A^{2(n+1)}\mathbf{1}(\type)=\sum_{i_0,\ldots,i_{n}\in \{1,\ldots,4\}}B_{i_{n}}\circ \cdots \circ B_{i_0}\mathbf{1}(\type)\le 4 C_{\eps} \sum_{i_1,\ldots,i_{n}\in \{1,\ldots,4\}} B_{i_n}\circ \cdots \circ B_{i_1}g_1(\type).
\end{equation}
Up to constants, the estimates for $B_3$ and $B_4$ preserve $g_1$ but change $g_2$ into $g_1$, whereas the estimates for $B_1$ and $B_2$ preserve $g_2$ and change $g_1$ into $g_2$. Hence, we split the sequence of indices into blocks containing only $1$ or $2$ and blocks containing only $3$ or $4$. We write $m$ for the number of blocks, $k_j$ for the length of block $j$ and $\bar{k}_j:=\sum_{i=1}^{j-1} k_i+1$ for the first index in block $j$. Then
\begin{align*}
\sum_{i_1,\ldots,i_{n}\in \{1,\ldots,4\}} B_{i_n}\circ \cdots \circ B_{i_1}g_1(\type)= \sum_{m=1}^{n+1} \sum_{\substack{k_1+\ldots+k_m=n\\ k_1 \in \N_0, k_2,\ldots,k_m \in \N}} \sum_{(i_1,\ldots,i_n)} B_{i_n}\circ \cdots \circ B_{i_1}g_1(\type),
\end{align*}
where the last sum is over all sequences of indices $(i_1,\ldots,i_n)$ which satisfy $i_{\bar{k}_j},\ldots ,i_{\bar{k}_{j+1}-1} \in \{3,4\}$ for $j$ odd and $i_{\bar{k}_j},\ldots, i_{\bar{k}_{j+1}-1} \in \{1,2\}$ for $j$ even. We insist that formally the first block contains the indices $3$ or $4$ --- the case that this does not hold is covered by $k_1=0$. Hence, in the first block, operators $B_3$ and $B_4$ encounter $g_1$, which is preserved. To determine the constants, we only have to keep track of how often $B_4$ is used; we call this number $l_1$. The first operator belonging to
a new block $j$ causes a factor $b_{\rm bg}\log (\eps^{1-2\gamma})$ and if the change is from a $\{1,2\}$ to a $\{3,4\}$ block, then an additional $\eps^{2\gamma-1}$ is obtained. For the subsequent
steps within block $j$, we again have to track how often the operator causing the smaller constant $b_{\rm sm}$, $B_1$ or $B_4$, is used. This number is called $l_j$. After applying all $n$ operators,
the function $g_1(\type) \mathbbm{1}_{\text{odd}}(m)+g_2(\type) \mathbbm{1}_{\text{even}}(m)$ remains and we bound it by $\eps^{-\gamma}$. This procedure yields
\begin{align*}
&\sum_{i_1,\ldots,i_{n}\in \{1,\ldots,4\}} B_{i_n}\circ \cdots \circ B_{i_1}g_1(\type)\\
&\le \sum_{m=1}^{n+1} \sum_{\substack{k_1+\ldots+k_m=n\\ k_1 \in \N_0, k_2,\ldots,k_m \in \N}} b_{\rm bg}^{m-1} (\log (\eps^{1-2\gamma}))^{m-1} \eps^{(2\gamma-1)(\lceil \frac{m}{2}\rceil-1)} \eps^{-\gamma}\\
&\phantom{\le \sum_{m=1}^{n+1} \sum_{\substack{k_1+\ldots+k_m=n\\ k_1 \in \N_0, k_2,\ldots,k_m \in \N}}} \sum_{l_1=0}^{k_1}\Big[ \binom{k_1}{l_1} b_{\rm sm}^{l_1} b_{\rm bg}^{k_1-l_1}\Big]\prod_{j=2}^m \sum_{l_j=0}^{k_j-1}\Big[ \binom{k_j-1}{l_j} b_{\rm sm}^{l_j} b_{\rm bg}^{k_j-1-l_j} \Big]\\
&=\eps^{-\gamma} \sum_{m=1}^{n+1} \sum_{\substack{k_1+\ldots+k_m=n\\ k_1 \in \N_0, k_2,\ldots,k_m \in \N}} b_{\rm bg}^{m-1} (\log (\eps^{1-2\gamma}))^{m-1} \eps^{(2\gamma-1)(\lceil \frac{m}{2}\rceil-1)} (b_{\rm sm}+b_{\rm bg})^{k_1} \prod_{j=2}^m (b_{\rm sm}+ b_{\rm bg})^{k_j-1}\\
&=\eps^{-\gamma} \sum_{m=1}^{n+1} \sum_{\substack{k_1+\ldots+k_m=n\\ k_1 \in \N_0, k_2,\ldots,k_m \in \N}}b_{\rm bg}^{m-1}(\log (\eps^{1-2\gamma}))^{m-1} \eps^{(2\gamma-1)(\lceil \frac{m}{2}\rceil-1)} (b_{\rm sm}+b_{\rm bg})^{n-(m-1)}.
\end{align*}
Given $m$, the number of configurations $k_1 \in \N_0, k_2,\ldots,k_m \in \N$ with $k_1+\ldots+k_m=n$ is the number of arrangements of $m-1$ dividers and
$n-(m-1)$ balls, which equals $\binom{n}{m-1}$. 
%To determine the number of summands, notice that
%\begin{align*}
%&|\{(k_1,\ldots,k_m)\in \N_0 \times \N^{m-1}\colon \sum_{i=1}^m k_i=n\}|\\
%&= \big|\big\{(k_2,\ldots,k_m)\in \big(\N_0\big)^{m-1}\colon \sum_{i=2}^m k_i=n-(m-1)\big\}\big|+\big|\big\{(k_1,\ldots,k_m)\in \big(\N_0\big)^{m}\colon \sum_{i=1}^m k_i=n-m\big\}\big|\\
%&= \binom{n-(m-1)+(m-2)}{m-2}+\binom{n-m+(m-1)}{m-1}=\binom{n}{m-1}.
%\end{align*}
Since $\lceil \frac{m}{2}\rceil -1 \ge \frac{m-1}{2}-\frac{1}{2}$, an application of the binomial theorem yields
\begin{equation}\label{2ndAest}
%\begin{split}
\sum_{i_1,\ldots,i_{n}\in \{1,\ldots,4\}} B_{i_n}\circ  \cdots \circ B_{i_1}g_1(\type)
%\le \eps^{-2\gamma+1/2} \sum_{m=1}^{n+1} \binom{n}{m-1} b_{\rm bg}^{m-1} (\log \eps^{1-2\gamma})^{m-1} \eps^{(2\gamma-1)\frac{m-1}{2}} (b_{\rm bg}+b_{\rm sm})^{n-(m-1)}\\
\le  \eps^{-2\gamma+1/2} \big(b_{\rm bg} \log(\eps^{1-2\gamma}) \eps^{\gamma-1/2}+b_{\rm bg}+b_{\rm sm}\big)^n.
%\end{split}
\end{equation}
Combining \eqref{1stAest} and \eqref{2ndAest}, we conclude that for all $\type \in \typespace$
\begin{align*}
A^{2(n+1)}\mathbf{1}(\type)&\le 4 C_{\eps} \eps^{-2\gamma+1/2} b_{\rm bg}^n \big(\log(\eps^{1-2\gamma}) \eps^{\gamma-1/2}+1+\tfrac{b_{\rm sm}}{b_{\rm bg}}\big)^n.
\end{align*}
Now (\ref{eq:spectral}) yields, for all $\eps \in (0,1)$,
\[
\rho_{\eps}(A)^{-1}\ge \sfrac{2\gamma -1}{\sqrt{\beta (\gamma+\beta)}} \eps^{\gamma-1/2} \Big(1+\log (\eps^{1-2\gamma})\eps^{\gamma-1/2}+\big[
\log (\eps^{1-2\gamma})\eps^{\gamma-1/2}\big]^2\Big)^{-1/2}.\qedhere
\]
\end{proof}

The insight gained in the proof of the lower bound, enables us to `guess' an approximating eigenfunction, which is the main ingredient in the proof of the upper bound.

\begin{proof}[Proof of upper bound in Proposition~\ref{pro:pc}\,(b)]
Let $c_{\rightp}:=1$ and $c_{\leftp}:=\beta/(\gamma+\beta)$ and, for $(\loc,\mar) \in \typespace$, let 
\[
g_{{\rm{e}}}(\loc,\mar):=c_{\mar}\eps^{\gamma} e^{-\gamma \loc}  \mathbbm{1}_{[\log \eps, \frac{\log \eps}{2}]}(\loc) +\sqrt{\beta / (\gamma+\beta)} \eps^{1/2} e^{-(1-\gamma)\loc} \mathbbm{1}_{(\frac{\log \eps}{2},0]}(\loc).
\]
Notice that $a_{\mar}/c_{\mar}=\gamma +\beta$ for $\mar \in \{\leftp,\rightp\}$. If $(\loc,\mar) \in [\log \eps,\frac{\log \eps}{2}]\times \{\leftp,\rightp\}$, then
\begin{align*}
Ag_{{\rm{e}}}(\loc,\mar) &\ge E_{(\loc,\mar)}\Big[\sum_{\heap{|x|=1}{\loc(x) >\frac{\log \eps}{2}}} g_{{\rm{e}}}(\loc(x),\leftp)\Big] 
=a_{\mar}\sqrt{\beta/(\gamma+\beta)} \eps^{1/2} e^{-(1-\gamma)\loc}\int_{-\loc+\frac{\log \eps}{2}}^{-\loc} e^{(2\gamma-1)t } \, dt\\
& = c_{\mar}\sfrac{\sqrt{\beta (\gamma+\beta)}}{2\gamma-1} \eps^{1/2}  e^{-\gamma \loc} \big[1-\eps^{\gamma-1/2}\big]
= \sfrac{\sqrt{\beta (\gamma+\beta)}}{2\gamma-1} \eps^{-\gamma+1/2}  \big[1-\eps^{\gamma-1/2}\big] g_{\rm{e}}(\loc,\mar).
\end{align*}
If $(\loc,\mar) \in (\frac{\log \eps}{2},0]\times \{\leftp,\rightp\}$, then
\begin{align*}
Ag_{{\rm{e}}}(\loc,\mar)&\ge E_{(\loc,\mar)}\Big[\sum_{\heap{|x|=1}{\loc(x) \le \frac{\log \eps}{2}}} g_{{\rm{e}}}(\loc(x),\rightp)\Big]
=\beta c_{\rightp} \eps^{\gamma} e^{-\gamma \loc} \int_{-\loc+\log \eps}^{-\loc+\frac{\log \eps}{2}} e^{(1-2\gamma)t} \,dt\\
& =\sfrac{\beta}{2\gamma-1} \eps^{\gamma} e^{-\gamma \loc}  e^{(1-2\gamma) (\log \eps-\loc)} \big[1-e^{-(1-2\gamma)\frac{\log \eps}{2}}\big]
= \sfrac{\sqrt{\beta (\gamma+\beta)}}{2\gamma-1} \eps^{-\gamma+1/2}  [1-\eps^{\gamma-1/2}] g_{\rm{e}}(\loc,\mar).
\end{align*}
By monotonicity of $A$ this implies
\[
\|A^n\|\ge \big(\sfrac{\sqrt{\beta (\gamma+\beta)}}{2\gamma-1} \eps^{1/2-\gamma}[1-\eps^{\gamma -1/2}]\big)^n.
\] 
Taking the $n$-th root on both sides, an application of (\ref{eq:spectral}) yields the required bound for~$\rho_{\eps}(A)$.
\end{proof}

\subsection{A multitype branching process}\label{sec:BP}

In this section, we analyse the $\IBP$ and its relation to the associated operator~$A$. We begin by providing properties of~$A$ in Section~\ref{sec:A}, and then use these properties 
to prove necessary and sufficient conditions for the multitype branching process to survive with positive probability in Section~\ref{sec:survival}. Throughout, we use the notation introduced in Section~\ref{sec:BP_def} and  write $P_{\type,p}$ for the distribution of the percolated $\IBP$ with retention probability $p$ started with one particle of type $\type \in \typespace$,
abbreviating~$P_{\type}:=P_{\type,1}$.

\subsubsection{Proof of Proposition~\ref{pro:A}}\label{sec:A}

\begin{lemma}\label{lem:pos}
For all nonnegative $g \in C(\typespace)$ with $g\not\equiv 0$, we have $\displaystyle\min_{\type \in \typespace}A^2g(\type)>0$.
\end{lemma}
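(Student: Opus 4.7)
The plan is to exploit the explicit integral form of $A$ from \eqref{eq:A_explicit} together with the continuity of $g$: locate a small region where $g$ is bounded away from zero, propagate this positivity to a half-interval of locations after one application of $A$, and then observe that a second application of $A$ necessarily samples from this half-interval regardless of the base point $(\loc,\mar)$.

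Since $g\geq 0$ is continuous on the compact set $\typespace$ and $g\not\equiv 0$, I first fix $(\loc_0,\mar_0)\in\typespace$, $c>0$ and $\delta>0$ such that $g(\cdot,\mar_0)\geq c$ on the interval $I_0:=(\loc_0-\delta,\loc_0+\delta)\cap[\log\eps,0]$. The key observation from \eqref{eq:A_explicit} is that the two integrands are weighted by strictly positive factors ($\beta$ and $a_\mar\in\{\beta,\gamma+\beta\}$), so the indicator of $\{(\loc,\mar): Ag(\loc,\mar)>0\}$ does not depend on the mark $\mar$. Hence the set $U:=\{\loc\in[\log\eps,0]: Ag(\loc,\mar)>0\}$ is well-defined, and after the change of variables $s=\loc+t$ the two integrals in \eqref{eq:A_explicit} sample $g(\cdot,\rightp)$ over $[\log\eps,\loc]$ and $g(\cdot,\leftp)$ over $[\loc,0]$ respectively. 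Consequently: if $\mar_0=\rightp$ then the first integral is strictly positive whenever $[\log\eps,\loc]$ meets $I_0$, i.e.\ whenever $\loc>\loc_0-\delta$, giving $U\supseteq(\loc_0-\delta,0]$; symmetrically, if $\mar_0=\leftp$ then $U\supseteq[\log\eps,\loc_0+\delta)$.

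For the second application of $A$, I apply the same change of variables to obtain
\[
A^2g(\loc,\mar)=\int_{\log\eps}^{\loc} Ag(s,\rightp)\,\beta e^{(1-\gamma)(s-\loc)}\,ds + \int_{\loc}^{0} Ag(s,\leftp)\,a_\mar e^{\gamma(s-\loc)}\,ds.
\]
In the case $U\supseteq(\loc_0-\delta,0]$ I check: if $\loc>\loc_0-\delta$ the first summand is strictly positive because $[\log\eps,\loc]\cap U$ has positive measure; if $\loc\leq\loc_0-\delta$ then $[\loc,0]\supseteq[\loc_0-\delta,0]$ meets $U$ in a set of positive measure, so the second summand is strictly positive. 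The boundary points $\loc=\log\eps$ and $\loc=0$ are handled the same way: at $\loc=0$ the second integral is trivially zero but the first captures all of $U$, and vice versa at $\loc=\log\eps$. The symmetric case $U\supseteq[\log\eps,\loc_0+\delta)$ is treated identically with the roles of the two summands swapped.

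This shows $A^2g(\loc,\mar)>0$ pointwise on $\typespace$. Since $A^2g$ is continuous (either directly from the integral formula or as a consequence of the compactness of $A$ given by Proposition~\ref{pro:A}) and $\typespace$ is compact, the minimum is attained and strictly positive. The main (mild) obstacle is the bookkeeping at the boundary locations $\loc\in\{\log\eps,0\}$, where one of the two integrals in \eqref{eq:A_explicit} vanishes automatically; this is handled by ensuring that the half-interval $U$ always contains a non-degenerate piece inside $[\log\eps,0]$, which follows from $\loc_0\in[\log\eps,0]$ and $\delta>0$.
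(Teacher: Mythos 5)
Your proof is correct, and it follows a genuinely different route from the paper. The paper argues probabilistically, directly from the branching-process description: it picks an interval $[\loc_1,\loc_2]\times\{\mar_0\}$ where $g$ is bounded below, reduces the claim to showing that the probability of producing a generation-$2$ descendant with type in that set is bounded away from zero uniformly in the start type, and then cites two facts — any particle has uniformly positive probability of producing offspring in a given interval of positive length, and two generations suffice to also control the mark. You instead work analytically and explicitly with the integral kernel \eqref{eq:A_explicit}: the change of variables $s=\loc+t$ reveals that $Ag(\loc,\cdot)$ samples $g(\cdot,\rightp)$ on $[\log\eps,\loc]$ and $g(\cdot,\leftp)$ on $[\loc,0]$, so after one application of $A$ the set $U$ on which $Ag$ is positive (which, as you correctly observe, is mark-independent because the weights $\beta$ and $a_\mar$ are both strictly positive) contains a nondegenerate half-interval adjacent to one endpoint of $[\log\eps,0]$; a second application then samples $U$ from every $\loc$, including the two boundary points. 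Your version has the merit of being entirely self-contained and making the ``two steps'' mechanism concrete via the kernel, at the cost of more bookkeeping at the endpoints; the paper's version is shorter but appeals to qualitative properties of the branching process that are left implicit. Both establish the required strict positivity, and your final step — deducing that the minimum over the compact $\typespace$ is attained and positive by continuity of $A^2g$ (from the integral formula or from compactness of $A$) — is sound.
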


\begin{proof}
If $g \in C(\typespace)$, $g\ge 0$, $g \not\equiv 0$, then there exist $\log \eps \le \loc_1<\loc_2\le 0$ and $\mar_0 \in \{\leftp, \rightp\}$ such that $g$ is strictly positive on $[\loc_1,\loc_2]\times \{\mar_0\}$. Hence, it suffices to show that
\[
\min_{\type\in \typespace} P_{\type}\big(\exists x \colon |x|=2, \type(x) \in [\loc_1,\loc_2]\times\{\mar_0\}\big)>0.
\]
By the definition of the process any particle produces offspring in a given interval of positive length with, uniformly in the start type, strictly positive probability. 
The two steps allow the time needed to ensure that the relative position of the parent satisfies~$\alpha(x)=\mar_0$. 
\end{proof}

\begin{lemma} \label{lem:compact}
The operator $A\colon C(\typespace)\to C(\typespace)$ is compact.
\end{lemma}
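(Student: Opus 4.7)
The plan is to verify the hypotheses of the Arzelà–Ascoli theorem on the type space $\typespace = [\log\eps,0]\times\{\leftp,\rightp\}$, which is compact as a disjoint union of two closed intervals. Note that $C(\typespace)$ may be viewed as the product $C([\log\eps,0])\times C([\log\eps,0])$, one copy per mark.

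The first step is to rewrite $A$ in a form that factors out the $\loc$–dependence from the integration. In the formula \eqref{eq:A_explicit}, substitute $s=\loc+t$ in both integrals to obtain
\[
Ag(\loc,\mar)=\beta\, e^{-(1-\gamma)\loc}\!\int_{\log\eps}^{\loc} e^{(1-\gamma)s} g(s,\rightp)\,ds \;+\; a_{\mar}\, e^{-\gamma\loc}\!\int_{\loc}^{0} e^{\gamma s} g(s,\leftp)\,ds.
\]
This exhibits $Ag$, for each fixed mark $\mar$, as a sum of two terms each of the form \emph{(smooth prefactor in $\loc$) $\times$ (Volterra integral in $\loc$ of a continuous kernel against $g$)}.

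Next I would establish uniform boundedness and equicontinuity of the image of the unit ball. Uniform boundedness is immediate: $\|Ag\|_\infty \le \|A\mathbf{1}\|_\infty\,\|g\|_\infty$, and $A\mathbf{1}$ is bounded on $\typespace$. For equicontinuity, fix $\mar\in\{\leftp,\rightp\}$ and any $g$ with $\|g\|_\infty\le 1$. Differentiating the displayed expression in $\loc$ and using that the integrands are bounded by $\max(1,\eps^{-1})$, one finds a constant $C=C(\eps,\beta,\gamma)$ independent of $g$ such that
\[
\Big|\frac{\partial}{\partial \loc}Ag(\loc,\mar)\Big|\le C\|g\|_\infty \qquad\text{for all } \loc \in [\log\eps,0].
\]
Consequently $\loc\mapsto Ag(\loc,\mar)$ is Lipschitz on $[\log\eps,0]$ with Lipschitz constant at most $C$, uniformly over $g$ in the unit ball. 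Since the two sheets of $\typespace$ are a positive distance apart in the disjoint–union topology, equicontinuity on each sheet yields equicontinuity on all of $\typespace$.

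Finally, the Arzelà–Ascoli theorem implies that $A$ maps the closed unit ball of $C(\typespace)$ into a relatively compact set, i.e.\ $A$ is compact. I do not foresee any genuine obstacle: the only subtlety is that the original kernel in \eqref{eq:A_explicit} contains an indicator that makes a naive kernel–continuity argument fail, but the substitution above removes this issue by absorbing the jump into the limits of integration, after which a straightforward Lipschitz estimate in $\loc$ suffices.
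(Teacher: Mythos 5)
Your proof takes the same route as the paper's: rewrite $A$ as an integral operator using \eqref{eq:A_explicit} and apply the Arzel\`a--Ascoli theorem, with the only refinement being that you explicitly supply the Lipschitz/equicontinuity estimate (after the change of variables absorbing the indicator into the limits of integration) that the paper leaves implicit. The argument is correct.
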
 

\begin{proof} According to (\ref{eq:A_explicit}), we can write for $g \in C(\typespace)$ and $(\loc,\mar) \in \typespace$,
\[
Ag(\loc,\mar) =\int_{\log \eps}^{0} g(t,\rightp) \kappa_{\leftp}(\loc,t)\, dt+ \int_{\log \eps }^0 g(t,\leftp) \kappa_{\rightp}(\loc,\mar,t) \,dt,
\]
with $\kappa_{\leftp}(\loc,t)=\mathbbm{1}_{[\log \eps,\loc]}(t) \beta e^{(1-\gamma)(t-\loc)}$ and $\kappa_{\rightp}(\loc,\mar,t)=\mathbbm{1}_{[\loc,0]}(t) a_{\mar} e^{\gamma (t-\loc)}$. 
Thus $A$ can be written as the sum of two operators, which are both compact by the Arzel\`a-Ascoli theorem. 
\end{proof}

We summarize some standard properties of compact, positive operators in the following proposition. 

\begin{proposition} \label{pro:decom}
Let $X$ be a complex Banach space and $A\colon X \to X$ be a linear, compact and strictly positive operator. 
\begin{itemize}
\item[(i)] The spectral radius of $A$, $\rho=\rho(A)$, is a strictly positive eigenvalue of $A$ with one dimensional eigenspace, generated by a strictly positive eigenvector $\eigenfct$. Eigenvalue $\rho$ is also the spectral radius of adjoint $A^*$ and the corresponding eigenspace is generated by a strictly positive eigenvector~$\nu_0$.
We rescale $\eigenfct$ and $\nu_0$ such that $\|\eigenfct\|=1$ and $\nu_0(\eigenfct)=1$ to make the choice unique.
\item[(ii)] There exists $\theta_0\in [0,\rho)$ such that $|\theta|\le \theta_0$ for all $\theta \in \sigma(A)\setminus\{\rho\}$, where $\sigma(A)$ is the spectrum of~$A$.
\item[(iii)] For any $\theta > \theta_0$ and $g \in X$, we have
$A^ng=\rho^n \nu_0(g) \eigenfct+O(\theta^n).$
\end{itemize}
\end{proposition}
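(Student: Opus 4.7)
The proposition packages the standard Krein--Rutman / Perron--Frobenius theory for compact positive operators on a complex Banach space, so the plan is to deduce each of the three parts from classical results rather than to carry out the arguments from scratch.

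For part~(i), I would invoke the Krein--Rutman theorem in the form: a compact linear operator $A$ on $X$ leaving the positive cone invariant and satisfying the strict positivity assumption has spectral radius $\rho := \rho(A) > 0$, attained as an eigenvalue, with a strictly positive eigenvector $\eigenfct$. Simplicity of the eigenspace for $\rho$ is a consequence of strict positivity: any second real eigenvector can be adjusted by a multiple of $\eigenfct$ to a nonzero nonnegative eigenvector that attains $0$, which contradicts the image under $A$ being strictly positive; the complex case reduces to the real one by splitting into real and imaginary parts. The same theorem applied to the adjoint $A^*$, which is also compact and positive with respect to the dual cone, yields the strictly positive eigenfunctional $\nu_0$. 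The normalisations $\|\eigenfct\|=1$ and $\nu_0(\eigenfct)=1$ are well defined because $\nu_0(\eigenfct) > 0$ by strict positivity.

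For part~(ii), I would combine the Riesz--Schauder theorem --- which states that the nonzero spectrum of a compact operator consists of isolated eigenvalues of finite algebraic multiplicity with $0$ as the only possible accumulation point --- with a Perron-type argument excluding peripheral eigenvalues other than $\rho$. Given $A\psi = \lambda \psi$ with $|\lambda|=\rho$ and $\psi \ne 0$, the inequality $|A\psi| \le A|\psi|$ combined with $\nu_0(A|\psi|) = \rho\, \nu_0(|\psi|)$ forces equality $A|\psi| = \rho|\psi|$, so $|\psi|$ is proportional to $\eigenfct$; a further use of strict positivity then yields $\lambda = \rho$. Hence
\[
\theta_0 := \sup\{|\theta| \colon \theta \in \sigma(A)\setminus\{\rho\}\} < \rho,
\]
and the supremum is attained by isolation. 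For part~(iii), I would pass to the Riesz spectral projection
\[
P := \frac{1}{2\pi i} \oint_\Gamma (z - A)^{-1}\, dz,
\]
where $\Gamma$ is a small circle around $\rho$ separating it from $\sigma(A)\setminus\{\rho\}$. Simplicity of $\rho$ identifies $P$ as the rank-one projection $g \mapsto \nu_0(g)\eigenfct$. The subspace $Y := (\id - P)X$ is $A$-invariant with $\sigma(A|_Y) = \sigma(A)\setminus\{\rho\}$, so the spectral radius formula gives $\|A^n|_Y\| \le C_\theta\, \theta^n$ for every $\theta > \theta_0$. Writing $g = \nu_0(g)\eigenfct + (\id - P)g$ and applying $A^n$ yields $A^n g = \rho^n \nu_0(g)\eigenfct + O(\theta^n)$.

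The only step that is not pure bookkeeping is the Perron argument that upgrades positivity to strict dominance of $\rho$ in parts~(i)--(ii). For the operator of this paper the required strict positivity is available only for $A^2$ (Lemma~\ref{lem:pos}), not a priori for $A$, so when Proposition~\ref{pro:decom} is applied the cleanest route is to run the Krein--Rutman / Perron step with $A^2$ and transfer the conclusions back to $A$ through $\sigma(A^2) = \sigma(A)^2$, with eigenvectors matched via $A\eigenfct = \rho\eigenfct$. Everything else is a direct invocation of Krein--Rutman, Riesz--Schauder and the holomorphic functional calculus.
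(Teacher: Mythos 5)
Your proof is correct and follows essentially the same route as the paper, which likewise disposes of (i) and (ii) by citing the Krein--Rutman theorem (in Pinsky's form) together with the Riesz--Schauder description of the spectrum of a compact operator, and of (iii) by the spectral decomposition via Riesz projections. Your closing remark is a worthwhile addition: the paper's Lemma~\ref{lem:pos} establishes strict positivity only for $A^2$, so when Proposition~\ref{pro:decom} is invoked in the proof of Proposition~\ref{pro:A} the cleanest reading is indeed to apply the Krein--Rutman/Perron step to $A^2$ and transport the conclusions back to $A$ via the spectral mapping theorem $\sigma(A^2)=\{\lambda^2\colon \lambda\in\sigma(A)\}$, matching eigenvectors through $A\eigenfct=\rho\eigenfct$; the paper glosses over this point.
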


\begin{proof} Statements~(i) and~(ii) are immediate from the Krein-Rutman theorem, see Theorem\ 3.1.3 (ii) in \cite{Pin95}, and
the general form of the spectrum of compact operators. Statement (iii) then follows from the spectral decomposition of a compact operator on a
complex Banach space. See, for example, Heuser~\cite{Heu82}, and there in particular Theorem~49.1 and Proposition~50.1.
\end{proof}

Now all results are collected to establish Proposition~\ref{pro:A}.

\begin{proof}[Proof of Proposition~\ref{pro:A}]
Identity $A_p=pA$ holds by definition and implies $\rho_{\eps}(A_p)=p\rho_{\eps}(A)$. Moreover, it is clear that it suffices to prove the first sentence of the statement for $p=1$. Linearity is immediate from the definition, positivity was shown in Lemma~\ref{lem:pos} and compactness is the content of Lemma~\ref{lem:compact}. The positive spectral radius follows immediately from 
Proposition~\ref{pro:decom} (i).
\end{proof}

\subsubsection{Proof of Theorem~\ref{survival_vs_spectral}}\label{sec:survival}

We start with a moment estimate for the total number of offspring of a particle. In the sequel, we write $|\IBP_n|$ for the number of particles in generation $n$ of the $\IBP$.

\begin{lemma} \label{lem:L2}We have $\sup_{\type\in \typespace}E_{\type}\big[|\IBP_1|^2\big]<\infty$.
\end{lemma}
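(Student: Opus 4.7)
The plan is to split $|\IBP_1|$ into its left and right offspring and to bound the second moment of each contribution uniformly in the starting type.

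Fix $\type=(\loc,\mar)\in\typespace$ and write $|\IBP_1|=N_L+N_R$, where $N_L$ is the number of points of the Poisson point process $\Pi$ lying in $[\log\eps-\loc,0]$, and $N_R$ is the number of points of $\Pi^{\mar}$ lying in $[0,-\loc]$. By the definition of the offspring distributions, $N_L$ is Poisson distributed with parameter
\[
\int_{\log\eps-\loc}^{0}\beta e^{(1-\gamma)t}\,dt\le \int_{\log\eps}^{0}\beta e^{(1-\gamma)t}\,dt=\frac{\beta}{1-\gamma}\big(1-\eps^{1-\gamma}\big),
\]
which is bounded uniformly in $\type$. Hence $E_{\type}[N_L^2]$ is uniformly bounded.

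For $N_R$, observe that by construction $N_R=Z_{-\loc}$ if $\mar=\leftp$ and $N_R=\hat{Z}_{-\loc}-1$ if $\mar=\rightp$, so it suffices to bound $E[Z_t^2]$ and $E[\hat Z_t^2]$ uniformly on the compact interval $t\in[0,-\log\eps]$. Using the generator $Lg(k)=f(k)(g(k+1)-g(k))$ with $f(k)=\gamma k+\beta$, I would first take $g(k)=k$ to obtain $Lg(k)=\gamma k+\beta$ and therefore
\[
\frac{d}{dt}E[Z_t]=\gamma E[Z_t]+\beta,
\]
which integrates to $E[Z_t]=\tfrac{\beta}{\gamma}(e^{\gamma t}-1)$ (or the analogous expression starting from $1$). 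Next I would take $g(k)=k^2$, so that $Lg(k)=(\gamma k+\beta)(2k+1)$, yielding
\[
\frac{d}{dt}E[Z_t^2]=2\gamma E[Z_t^2]+(2\beta+\gamma)E[Z_t]+\beta.
\]
This linear ODE has an explicit solution which is finite on any bounded time interval, in particular on $[0,-\log\eps]$. The same argument applied to $\hat Z$ gives a uniform bound on $E[\hat Z_t^2]$ for $t\in[0,-\log\eps]$.

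Combining the two estimates via $(a+b)^2\le 2a^2+2b^2$ yields
\[
E_{\type}\big[|\IBP_1|^2\big]\le 2E_{\type}[N_L^2]+2E_{\type}[N_R^2],
\]
where the right-hand side is bounded by a constant depending only on $\eps$, $\beta$ and $\gamma$. Taking the supremum over $\type\in\typespace$ gives the claim. There is no real obstacle here: the only thing to be careful about is to treat the two starting conditions $Z_0=0$ and $\hat Z_0=1$ separately when computing the expectations, which only affects the constant but not the finiteness of the bound.
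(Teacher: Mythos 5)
Your decomposition of $|\IBP_1|$ into the left (Poisson) and right (jump-process) contributions and the use of $(a+b)^2\le 2a^2+2b^2$ is exactly the structure of the paper's proof; the one substantive difference is how you justify finiteness of the second moment of the right part. The paper simply notes that $\hat Z$ stochastically dominates $Z$ (since $f$ is non-decreasing), reduces both marks to $E[(\hat Z_{-\log\eps})^2]$, and cites Lemma~1.12 of \cite{DerMoe13} for its finiteness. You instead derive the linear ODEs for $E[Z_t]$ and $E[Z_t^2]$ from the generator and solve them on the compact interval $[0,-\log\eps]$. Your route is more self-contained and elementary, at the cost of a small unstated technicality: to differentiate $E[Z_t^2]$ under the generator you should first observe that $E[Z_t^2]$ is locally finite (e.g.\ because $Z$ has linearly growing jump rate and hence is non-explosive with all polynomial moments finite), a point the paper sidesteps entirely by citing the earlier lemma. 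Also, as a very minor point, you could shorten your argument by invoking the stochastic domination $Z_t \le_{\rm st} \hat Z_t$ to treat only one process, as the paper does, but treating both separately is equally valid.
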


\begin{proof}
Let $\Pi$, $Z$ and $\hat{Z}$ be independent realisations of the Poisson point process and the pure jump processes defined in Section~\ref{sec:BP_def}. Let $\type =(\loc,\mar) \in \typespace$. By the definition of the $\IBP$, 
\[
|\IBP_1|\overset{d}{=}\begin{cases}
\Pi([\log \eps-\loc,0])+Z_{-\loc} & \text{if }\mar=\leftp,\\
\Pi([\log \eps-\loc,0])+\hat{Z}_{-\loc} & \text{if }\mar = \rightp,\end{cases}
\]
where $\overset{d}{=}$ denotes distributional equality. Since $f$ is non-decreasing, $\hat{Z}$ stochastically dominates $Z$. In combination with Young's inequality this fact implies, 
for all $\type \in \typespace$,
\[
E_{\type}\big[|\IBP_1|^2\big] \le 2\Big( \E\big[\Pi([\log \eps,0])^2\big]+E\big[(\hat{Z}_{-\log \eps})^2\big]\Big).
\]
The first term on the right is finite because $\Pi$ is a Poisson point process with finite intensity measure. The second summand was computed in Lemma~1.12 of \cite{DerMoe13} and found to be finite.
\end{proof}

The next result is a classical fact about branching processes. We give a proof since we could not find a reference for the result in sufficient generality. See Theorem~III.11.2 in \cite{H63} for a special case.

\begin{lemma} \label{lem:noBound} For all $p \in [0,1]$, $N \in \N$ and $\type \in \typespace$,
\[
P_{\type,p}\big(1\le |\IBP_n| \le N \text{ infinitely often}\big)=0.
\]
\end{lemma}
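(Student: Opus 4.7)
The plan is to show that, uniformly in the configuration of particles in generation~$n$, there is a strictly positive lower bound on the probability of extinction at generation~$n+1$ whenever $1\le|\IBP_n|\le N$. Applying the Markov property at the successive return times of the process to $[1,N]$ then forces any such visit to be followed by permanent extinction with uniform probability, and a geometric estimate rules out infinitely many visits.

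First I would establish the uniform lower bound
\[
\delta_1\;:=\;\inf_{\type\in\typespace}\, P_{\type,p}\bigl(|\IBP_1|=0\bigr)\;>\;0.
\]
The case $p=0$ is trivial, so I may assume $p>0$. Using the explicit offspring description of Section~\ref{sec:BP_def}, a particle of type $(\loc,\mar)$ produces no left offspring with probability at least the probability that the Poisson point process $\Pi$ has no atom on the whole half-line $(-\infty,0]$, namely $\exp(-\beta/(1-\gamma))$; independently, its number of right offspring equals zero with probability at least $\eps^{\gamma+\beta}$, because the pure jump process $\hat{Z}$ remains at its initial value throughout any interval of length at most $-\log\eps$ with at least that probability. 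Both bounds are uniform in $\type\in\typespace$, and percolation thinning only increases the probability that the first generation is empty.

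Combining this estimate with the branching property and independence across particles, I would deduce that on the event $\{1\le|\IBP_n|\le N\}$,
\[
P_{\type,p}\bigl(|\IBP_{n+1}|=0\,\big|\,\F_n\bigr)\;\ge\;\delta_1^{|\IBP_n|}\;\ge\;\delta_1^N\;=:\;\delta\;>\;0,
\]
irrespective of the types carried by the particles alive at time~$n$. Introducing $\sigma_0:=\inf\{n\ge 0:1\le|\IBP_n|\le N\}$ and iteratively $\sigma_{k+1}:=\inf\{n>\sigma_k:1\le|\IBP_n|\le N\}$, the Markov property at $\sigma_k$ combined with the previous display yields $P_{\type,p}(\sigma_{k+1}<\infty\mid\sigma_k<\infty)\le 1-\delta$, since on the complementary event $\{|\IBP_{\sigma_k+1}|=0\}$ extinction is permanent and precludes any further return to $[1,N]$. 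Induction in $k$ then produces $P_{\type,p}(\sigma_k<\infty)\le(1-\delta)^k$, and since $\{1\le|\IBP_n|\le N\text{ infinitely often}\}=\bigcap_k\{\sigma_k<\infty\}$, the conclusion follows upon letting $k\to\infty$.

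The main obstacle is the uniform positivity of $\delta_1$, which must be established across the whole type space including its boundary points $\loc=\log\eps$, where the intensity of left offspring is greatest, and $\loc=0$, where offspring production is trivially absent. The two explicit bounds above handle these cases simultaneously; once they are in place the rest is a clean geometric decay argument with no further subtleties.
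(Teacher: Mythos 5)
Your proposal is correct and follows essentially the same two-step route as the paper: first establish a uniform lower bound on the one-step extinction probability $\delta_1=\inf_{\type}P_{\type,p}(|\IBP_1|=0)>0$, then apply the (strong) Markov property at successive return times to $[1,N]$ to obtain a geometric bound that rules out infinitely many visits. The only cosmetic difference is that you compute an explicit lower bound $e^{-\beta/(1-\gamma)}\eps^{\gamma+\beta}$ for $\delta_1$, whereas the paper merely notes that the relevant probabilities are positive.
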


\begin{proof}
We split the proof in two steps. First we show that $\delta:=\inf_{\type \in \typespace}P_{\type,p}(|\IBP_1| =0)>0$, then we conclude the statement from this result. 
By definition of the percolated IBP, for all $(\loc,\mar) \in \typespace$,
\begin{align*}
P_{(\loc,\mar),p}\big(|\IBP_1|=0\big)\ge P_{(\loc,\mar),1}\big(|\IBP_1|=0\big) &=\begin{cases}
P\big(\{ \Pi([\log \eps-\loc,0])=0 \}\cap\{ Z_{-\loc}=0\}\big) & \text{if }\mar=\leftp\\
P\big(\{ \Pi([\log \eps-\loc,0])=0 \}\cap\{ \hat{Z}_{-\loc}=0\}\big) & \text{if }\mar=\rightp \end{cases}\\
&\ge P\big(\Pi([\log \eps,0])=0\big) P\big(\hat{Z}_{-\log \eps}=0\big)>0.
\end{align*}
Since the lower bound is independent of $(\loc,\mar)$, the claim~$\delta>0$ is proved.

For the second step of the proof, we set $p=1$ to simplify notation. The proof for general $p$ is identical. Fix $N \in \N$, set $\tau_0:=0$ and, for $k \ge 1$, 
let $\tau_k:=\inf\{n > \tau_{k-1}\colon |\IBP_n| \in [1,N]\}$, where $\inf \emptyset :=\infty$. The strong Markov property implies, for all $\type \in \typespace$ 
and $k \in \N$,
\[
P_{\type}(\tau_k <\infty) \le P_{\type}(\tau_1 <\infty) \,\sup_{\nu} P_{\nu}(\tau_1<\infty)^{k-1},
\]
where the supremum is over all counting measure $\nu$ on $\typespace$ such that $\nu(\typespace) \in [1,N]$. Under $P_\nu$, $\nu=\sum_{i=1}^n \delta_{\type_i}$,  the branching process is started 
with $n$ particles of types $\type_1,\ldots,\type_n$. When all original ancestors have no offspring in the first generation, then the branching process suffers immediate extinction and 
$\tau_1=\infty$. Hence, for all such $\nu$,
\[
P_{\nu}(\tau_1<\infty)=1-P_{\nu}(\tau_1=\infty) \le 1- P_{\nu}\big(|\IBP_1|=0\big)\le 1-\delta^{\nu(\typespace)} \le 1-\delta^N.  
\]
We conclude, for all $\type \in \typespace$
\begin{align*}
P_{\type}\big(1\le |\IBP_n| \le N \text{ infinitely often}\big)&= \lim_{k \to \infty} P_{\type}(\tau_k <\infty)\le \lim_{k \to \infty} \sup_{\nu} P_{\nu}(\tau_1<\infty)^{k-1} \\
&\le \lim_{k \to \infty} (1-\delta^N)^{k-1}=0.\qedhere
\end{align*}
\end{proof}

With Lemma~\ref{lem:noBound} at hand, we can prove Theorem~\ref{survival_vs_spectral}.

\begin{proof}[Proof of Theorem~\ref{survival_vs_spectral}] Throughout the proof, we write $\rho:=\rho_{\eps}(A_p)$.
First suppose $\rho\le 1$. By Lemma~\ref{lem:noBound} $P_{\type,p}(\lim_{n \to \infty}|\IBP_n|\in \{0,\infty\})=1$. By Proposition~\ref{pro:decom}\,(iii) the assumption $\rho\le 1$ implies that 
\[
E_{\type,p}\big[|\IBP_n|\big]=A_{p}^n\mathbf{1}(\type)=\rho^n \nu_0(\mathbf{1})\eigenfct(\type)+o(1).
\]
Hence, $\sup_{n \in \N} E_{\type,p}[|\IBP_n|]<\infty$ and we conclude that $\lim_{n \to \infty} |\IBP_n| =0$ $P_{\type,p}$-almost surely for all $\type \in \typespace$ and, therefore, $\zeta^{\eps}(p)=0$.\smallskip

Now suppose that $\rho>1$ and denote  $W_n=\frac{1}{\rho^n} \sum_{|x|=n} \eigenfct(\type(x))$ for $n \in \N$.
Then $(W_n\colon n \in \N)$ is under~$P_{\type,p}$ a nonnegative martingale with respect to the filtration generated by the branching process. Hence, $W:=\lim_{n \to \infty} W_n$ 
exists almost surely. Given Lemma \ref{lem:L2} Biggins and Kyprianou show in Theorem~1.1 of \cite{BK04} that  $E_{\type,p}[W]=1$ and therefore, $P_{\type,p}(W>0)>0$. 
This implies in particular that the branching process survives with positive probability irrespective of the start type.
\end{proof}

We now investigate continuity of the survival probability as a function of the attachment rule. For this purpose we emphasise 
dependence on $f$ by adding it as an additional argument to several quantities. 

\begin{lemma}\label{continuityZeta}
Let $p \in (0,1]$. Then $\lim_{\delta \downarrow 0} \zeta^{\eps}(p,f-\delta)=\zeta^{\eps}(p,f)$.
\end{lemma}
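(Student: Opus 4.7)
The proof has two parts: a monotone coupling gives $\limsup_{\delta\downarrow 0}\zeta^{\eps}(p,f-\delta)\le \zeta^{\eps}(p,f)$, and a continuity-of-fixed-point argument for the generating operator of the IBP gives the matching lower bound.

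For the upper bound, I would construct a coupling realising $\tree_{f-\delta}$ as a subtree of $\tree_{f}$. The left-offspring PPP has intensity $\beta e^{(1-\gamma)t}\,dt$ under $f$ versus $(\beta-\delta)e^{(1-\gamma)t}\,dt$ under $f-\delta$, so the latter is an independent $(\beta-\delta)/\beta$-thinning of the former. For the right-offspring pure-birth processes $Z^{f},Z^{f-\delta}$ (and analogously $\hat Z^{f},\hat Z^{f-\delta}$) on $[0,-\loc]$, I would use the standard exponential-clock coupling: with i.i.d.\ unit exponentials $E_1,E_2,\ldots$, the $k$-th jump of $Z^{f}$ occurs at $\sum_{j\le k}E_j/f(j-1)$ and the $k$-th jump of $Z^{f-\delta}$ at $\sum_{j\le k}E_j/(f-\delta)(j-1)$, so $Z^{f-\delta}_t\le Z^{f}_t$ pointwise and every $(f-\delta)$-jump is also an $f$-jump. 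Propagating this pair coupling independently through the genealogy and applying the same Bernoulli$(p)$ percolation yields $\tree_{f-\delta}\subseteq \tree_{f}$; started from the common initial particle at $S^{\eps}$ with mark $\leftp$, survival of $\tree_{f-\delta}$ implies survival of $\tree_{f}$, so $\zeta^{\eps}(p,f-\delta)\le \zeta^{\eps}(p,f)$.

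For the matching lower bound, if $\zeta^{\eps}(p,f)=0$ then monotonicity gives $\zeta^{\eps}(p,f-\delta)=0$, so I may assume $\zeta^{\eps}(p,f)>0$, equivalently $\rho_{\eps}(A_p^{f})>1$ by Theorem~\ref{survival_vs_spectral}. The operator $A_p^{f-\delta}$ has the same explicit form as $A_p^{f}$ in \eqref{eq:A_explicit} but with $\beta$ and $a_{\mar}$ replaced by $\beta-\delta$ and $a_{\mar}-\delta$, so $\|A_p^{f-\delta}-A_p^{f}\|\to 0$. Compactness of $A_p^{f}$ with simple leading eigenvalue (Proposition~\ref{pro:decom}\,(i)) then gives $\rho_{\eps}(A_p^{f-\delta})\to \rho_{\eps}(A_p^{f})>1$, so $\zeta^{\eps}(p,f-\delta)>0$ for all sufficiently small $\delta$. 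To upgrade to convergence of survival probabilities, I would use the generating-operator characterisation: define $G_{f}(h)(\type):=E_{\type,p,f}[\prod_{|x|=1}h(\type(x))]$ on $C(\typespace,[0,1])$, so that the extinction probability $q_{f}(\type):=1-P_{\type,p,f}(\tree_{f}\text{ survives})$ is the smallest fixed point of $G_{f}$. The coupling yields $G_{f-\delta}(h)\ge G_{f}(h)$ pointwise, hence $q_{f-\delta}\ge q_{f}$; as $\delta\downarrow 0$ it also gives $G_{f-\delta}\to G_{f}$ uniformly. Any subsequential pointwise limit $q^{*}$ of $q_{f-\delta}$ is then a fixed point of $G_{f}$ with $q_{f}\le q^{*}<\mathbf{1}$, the strict upper bound coming from the spectral-radius argument above. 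By the standard sharpness of the supercritical branching dichotomy, $DG_{f}(q_{f})$ is a compact strictly positive operator of spectral radius strictly less than one, so $q_{f}$ is an isolated fixed point of $G_{f}$ in the positive cone, forcing $q^{*}=q_{f}$. Averaging over $S^{\eps}$ by bounded convergence finishes the proof.

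The main obstacle I expect is the fixed-point isolation step, which requires the sharpness of the supercritical branching dichotomy in the continuous type-space setting; this relies on the compactness of $A$ (Lemma~\ref{lem:compact}) and the simplicity and strict positivity of its principal eigenfunction (Proposition~\ref{pro:decom}) to rule out accumulation of fixed points of $G_{f}$ near $q_{f}$ when passing to the limit $\delta\downarrow 0$.
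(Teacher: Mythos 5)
Your proposal takes a genuinely different route from the paper's proof, and the high-level plan is reasonable, but there are two gaps.

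For the upper bound, the paper simply asserts a natural coupling and you try to build one, which is the right instinct. However, the exponential-clock coupling you describe does \emph{not} give the subtree inclusion $\tree_{f-\delta}\subseteq\tree_{f}$: under your construction the $k$-th jump of $Z^{f-\delta}$ occurs at $\sum_{j\le k}E_j/\big(f(j-1)-\delta\big)$, which is almost surely not a jump time of $Z^{f}$, so the right-offspring point processes are not nested and the claim ``every $(f-\delta)$-jump is also an $f$-jump'' fails. What you need is a Markov thinning coupling: run the two chains jointly with coupled states $k\ge k'$; since $f$ is non-decreasing we have $f(k)\ge f(k')>f(k')-\delta$, so you may let $Z^{f}$ drive the jumps and allow $Z^{f-\delta}$ to jump with probability $(f(k')-\delta)/f(k)$ each time $Z^{f}$ does. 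This does produce nested jump-time sets, and combined with your (correct) PPP thinning for the left offspring it yields the subtree embedding and the upper bound.

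For the lower bound, your generating-operator approach is substantially different from the paper's, which works directly with the additive martingale $W_n=\rho^{-n}\sum_{|x|=n}\eigenfct(\type(x))$ and bounds $\zeta^{\eps}(p,f)-\zeta^{\eps}(p,f-\delta)$ by three terms: a small-$W$ term, a slow-growth term, and a term controlled by the coupling agreeing up to generation $N$ together with a uniform lower bound on survival. Your plan — show the monotone limit $q^{*}=\lim_{\delta\downarrow 0}q_{f-\delta}$ is a fixed point of $G_{f}$, show $q^{*}<\mathbf{1}$, identify it with $q_{f}$ — is attractive, but the final identification as written has a logical gap: even granting that $DG_{f}(q_{f})$ is compact, strictly positive and has spectral radius below one, this gives only \emph{local} isolation of $q_{f}$, ruling out nearby fixed points; it says nothing about a fixed point $q^{*}$ far from $q_{f}$, and nothing in your argument places $q^{*}$ in a neighbourhood of $q_{f}$. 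What you actually need is the stronger standard fact — which, amusingly, the paper itself invokes inside its own proof, for the different purpose of showing $W>0$ almost surely on survival — that for an irreducible supercritical multitype branching process the only $[0,1]$-valued fixed points of $G_{f}$ are $\mathbf{1}$ and $q_{f}$. With that substitution your argument closes: by monotonicity $q^{*}\le q_{f-\delta_0}<\mathbf{1}$ for some fixed small $\delta_0$ (using the spectral-radius perturbation bound), so $q^{*}\neq\mathbf{1}$, hence $q^{*}=q_{f}$, and bounded convergence over the initial location $S^{\eps}$ finishes.
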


\begin{proof}
Observe that  there exists a natural coupling of the $\IBP(f)$ with the $\IBP(f-\delta)$ such that every particle in the $\IBP(f-\delta)$ is also present in the $\IBP(f)$
and, hence, $\zeta^{\eps}(p,f-\delta)$ is increasing as $\delta \downarrow 0$. We can therefore assume that $\zeta^{\eps}(p,f)>0$, that is $\rho(f):=\rho_{\eps}(A_p,f)>1$, and by the continuity of $A_p$ in the attachment rule, there exists $\delta_0>0$ such that $\rho_{\eps}(A_p,f-\delta_0)>1$. In the proof of Theorem \ref{survival_vs_spectral} we have seen that this implies that the $\IBP(f-\delta_0)$ survives with positive probability, irrespective of the start type, and similar to Lemma \ref{lem:pos}, we conclude
\begin{equation}\label{eq:uniformExt}
\inf_{\type \in \typespace}P_{\type,p}\big(\IBP(f-\delta_0) \text{ survives}\big)>0.
\end{equation}
Recall the definition of the martingale~$(W_n\colon n  \in \N)$ and its almost sure limit~$W$ from the proof of Theorem~\ref{survival_vs_spectral}, which satisfies $E_{\type,p}[W]=1$ and
\[
W=\frac{1}{\rho(f)} \sum_{|x|=1} W(\type(x)) \qquad P_{\type,p}\text{-almost surely},
\]
where, conditionally on the first generation, $(W(\type(x))\colon |x|=1)$ are independent copies of the random variable~$W$ under $P_{\type(x),p}$.
In particular, $\type\mapsto P_{\type,p}(W=0)$ is a fixed point of the operator $Hg(\type)=E_{\type,p}[\prod_{|x|=1}g(\type(x))]$ on the set of $[0,1]$-valued, measurable functions. As the only $[0,1]$-valued fixed points of $H$ are the constant function $\mathbf{1}$ and the extinction probability $\type \mapsto P_{\type,p}(\IBP(f)\text{ dies out})$, we deduce that $W>0$ 
almost surely on survival. 
Let $c>0$ and $N \in \N$. On the space of the coupling between $\IBP(f)$ and $\IBP(f-\delta)$,
\begin{align*}
\zeta^{\eps}(p,f)-&\zeta^{\eps}(p,f-\delta)=P(\IBP(f) \text{ survives}, \IBP(f-\delta)\text{ dies out})\\
&\le P\big(W \le c, \IBP(f) \text{ survives}\big)+P\Big(W>c, \exists n \ge N\colon |\IBP_n(f)| < \frac{c\rho(f)^n}{2 \max \eigenfct}\Big)\\
&+ P\Big(|\IBP_n(f)| \ge \frac{c\rho(f)^n}{2 \max \eigenfct}\, \forall n \ge N,\IBP(f-\delta) \text{ dies out}\Big)=:\Theta_1(c)+\Theta_2(c,N)+\Theta_3(c,N,\delta).
\end{align*}
Since the offspring distribution of an individual particle is continuous in $\delta$ uniformly on the type space, the probability that $\IBP(f)$ and $\IBP(f-\delta)$ agree until generation $N$ tends to one as $\delta \downarrow 0$. On this event, when $|\IBP_N(f)| \ge C \rho(f)^N$ for some $C>0$, then the probability that the $\IBP(f-\delta)$ subsequently dies out is bounded by
\[
\sup_{\type \in \typespace}P_{\type,p}\big(\IBP(f-\delta) \text{ dies out}\big)^{\lceil C \rho(f)^N \rceil}.
\]
By \eqref{eq:uniformExt}, this expression tends to zero as $N \to \infty$ when $\delta\le\delta_0$. Hence, for all $c>0$, 
\[
0 \le \limsup_{\delta \downarrow 0}\big( \zeta^{\eps}(p,f)-\zeta^{\eps}(p,f-\delta)\big)\le \Theta_1(c) +\limsup_{N \to \infty} \Theta_2(c,N).
\]
On the event $\{W>c\}\cap\{W_n \to W\}$, there is a finite stopping time $N_0$ such that $W_n \ge W/2$ for all $n \ge N_0$ and we deduce that $\rho(f)^{-n} |\IBP_n(f)| \ge W_n/\max \eigenfct \ge c/(2\max \eigenfct)$. Since $W_n$ converges to $W$ almost surely, we conclude that $\lim_{N \to \infty}\Theta_2(c,N)=0$. Finally, $\Theta_1(c)$ tends to zero as $c \downarrow 0$ because $W$ is positive on the event of survival.
\end{proof}

\section{The topology of the damaged graph}

We investigate the empirical indegree distribution and maximal indegree of the damaged network in Section~\ref{sec:degrees}, and typical distances in
Section~\ref{sec:distances}.

\subsection{Degrees}\label{sec:degrees}
The following result for the indegrees $\Z[m,n]$ is immediate from the network construction.

\begin{lemma}\label{lem:degreeCoup}
Fix $n \in \N$, then the random variables $(\Z[m,n] \colon  m \leq n)$ are independent. Fix $\hat{m} \in \N$ and let $(\Z^m[\hat{m},n]\colon m \leq n)$, be independent 
copies of the random variable $\Z[\hat{m},n]$.
\begin{itemize}
\item[(i)] There exists a coupling between $(\Z[m,n]\colon 1 \le m \le \hat{m})$ and $(\Z^m[\hat{m},n]\colon 1 \le m \le \hat{m})$ such that 
\[
\Z[m,n] \ge \Z^m[\hat{m},n]\qquad \mbox{for all }  1 \le m\le \hat{m}.
\]
\item[(ii)] There exists a coupling between $(\Z[m,n]\colon \hat{m} \le m \le n)$ and $(\Z^m[\hat{m},n]\colon \hat{m} \le m \le n)$ such that 
\[
\Z[m,n] \le \Z^m[\hat{m},n]\qquad \mbox{for all } \hat{m} \le m \le n.
\]
\end{itemize}
\end{lemma}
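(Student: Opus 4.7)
The plan is to first establish the independence of $(\Z[m,n]\colon m\le n)$ by induction on $n$, and then obtain (i) and (ii) as consequences of a stochastic monotonicity property proved via a standard common-uniforms coupling exploiting that $f$ is non-decreasing.

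For the independence, I will proceed by induction on $n$. The base case $n=m$ is trivial since $\Z[m,m]=0$ deterministically. For the inductive step, note that conditional on $\F_n=\sigma(\graph_n)$, the edges $(n+1,m)$ for $m\le n$ are inserted independently, each with probability depending only on $\Z[m,n]$. Writing $B_{m,n+1}:=\mathbbm{1}\{(n+1,m)\in\eset_{n+1}\}$, we have $\Z[m,n+1]=\Z[m,n]+B_{m,n+1}$ with $B_{m,n+1}$ conditionally Bernoulli$(f(\Z[m,n])/n\wedge 1)$ given $\F_n$. Under the inductive hypothesis that $(\Z[m,n])_{m\le n}$ is independent, a direct factorisation of the joint mass function yields
\[
\P\!\big(\Z[m,n+1]=z_m\,\forall m\le n\big)=\prod_{m=1}^{n}\sum_{w_m}\P(\Z[m,n]=w_m)\,\P(B_{m,n+1}=z_m-w_m\,|\,\Z[m,n]=w_m),
\]
so the vector remains independent (with the trivial coordinate $\Z[n+1,n+1]=0$ appended).

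For parts (i) and (ii), since the marginal laws on both sides already agree coordinatewise by construction, and the two families $(\Z[m,n])_m$ and $(\Z^m[\hat{m},n])_m$ are both products of independent marginals (the first by the step above, the second by assumption), it suffices to construct, separately and independently for each $m$, a bivariate coupling of $\Z[m,n]$ with a single copy of $\Z[\hat{m},n]$ realising the required inequality. Fix such an $m$ and let $(U_k)_{k\ge 1}$ be i.i.d.\ Uniform$[0,1]$. Define two Markov chains driven by the same uniforms via
\[
\tilde{\Z}[m,k+1]=\tilde{\Z}[m,k]+\mathbbm{1}\!\big\{U_k<\tfrac{f(\tilde{\Z}[m,k])}{k}\wedge 1\big\},\qquad \tilde{\Z}[\hat{m},k+1]=\tilde{\Z}[\hat{m},k]+\mathbbm{1}\!\big\{U_k<\tfrac{f(\tilde{\Z}[\hat{m},k])}{k}\wedge 1\big\},
\]
started from $0$ at times $m$ and $\hat{m}$ respectively. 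Marginally each chain has the law of the corresponding indegree process. For (i), $m\le\hat{m}$ gives $\tilde{\Z}[m,\hat{m}]\ge 0=\tilde{\Z}[\hat{m},\hat{m}]$; then, since $f$ is non-decreasing under the linear attachment rule, the ordering $\tilde{\Z}[m,k]\ge\tilde{\Z}[\hat{m},k]$ propagates to $k+1$ because $f(\tilde{\Z}[m,k])/k\ge f(\tilde{\Z}[\hat{m},k])/k$ makes the indicator for the former at least that for the latter, so any jump of the latter is matched by a jump of the former. Case (ii) is symmetric: running the two chains from $0$ starting at times $\hat{m}\le m$ respectively, one has $0=\tilde{\Z}[m,m]\le\tilde{\Z}[\hat{m},m]$, and the same monotonicity argument preserves the reverse inequality up to time $n$. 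Performing these couplings on independent probability spaces for each $m$ and multiplying gives a joint coupling whose first marginal is the desired (independent) product law of $(\Z[m,n])$ and whose second marginal is the desired independent product of copies of $\Z[\hat{m},n]$, with the pathwise inequality holding on every coordinate.

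The only step requiring any care is the inductive independence argument, and the only model-dependent input is that the linear attachment rule is non-decreasing, which is immediate from $\gamma\ge 0$. No genuine obstacle arises: the compact Markovian structure of the indegree processes, together with the conditional independence of new edges at each step, makes this a routine coupling construction.
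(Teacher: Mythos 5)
Your proof is correct. The paper offers no argument for this lemma (it is declared ``immediate from the network construction''), and your write-up supplies exactly the standard details one would expect: the inductive factorisation of the joint mass function (valid because, given the indegree vector at time $n$, the new-edge indicators $B_{m,n+1}$ are conditionally independent with laws depending only on the respective $\Z[m,n]$) establishes independence, and the common-uniforms coupling using monotonicity of $f$ gives the coordinatewise stochastic domination, which then lifts to a joint coupling of the two independent families by taking product spaces.
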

\noindent Our goal is to determine the asymptotic behaviour of $\max_{m \in \vseteps_n}\Z[m,n]$ and
\[
X_{\ge k}^{\eps}(n)= \frac{1}{n-\fst} \sum_{m=\fst+1}^n \mathbbm{1}_{\{k,k+1,\ldots\}}(\Z[m,n]).
\]
Lemma~\ref{lem:degreeCoup} allows us to replace the independent random variables in these sequences by groups of independent and identically distributed random variables.

Dereich and M\"orters observe in~\cite{DerMoe09}, see for example Corollary~4.3, that the indegrees in network $(\graph_n \colon n\in\N)$ are closely related to the pure jump process $(Z_t)_{t \ge 0}$. Since the indegrees are not altered by the targeted attack, the same holds in the damaged network $(\grapheps_n\colon n\in\N)$. %To describe a suitable coupling, we introduce further notation. 
Let $\psi(k):=\sum_{j=1}^{k-1} j^{-1}$ for all $k \in \N$, which we consider as a time change, mapping real time epochs $k$ to an `artificial time'~$\psi(k)$.  The artificial time spent by the process $\Z[m,\cdot]$ 
in state $i$ is 
\[
T_m[i]:=\sup\Big\{ \sum_{j=k}^{l} \frac{1}{j}\colon \Z[m,k]=i=\Z[m,l]\Big\}.
\]
Let $k \in \N_0$ and $n_k$ the last real time that $\Z[m,\cdot]$ spends in $k$, that is, $\Z[m,n_k]=k$, $\Z[m,n_k+1]=k+1$. Then
$\sum_{i=0}^k T_m[i]= \sum_{j=m}^{n_k} j^{-1}=\psi(n_k+1)-\psi(m)$. In particular,
\begin{equation}\label{eq:degViaT}
\Z[m,n] \le k \quad \Leftrightarrow \quad \sum_{i=0}^k T_m[i] \ge \psi(n+1)-\psi(m).
\end{equation}
By definition, there exists a sequence of independent random variables $(T[i]\colon i \in \N_0)$ such that $T[i]$ is exponentially distributed with mean $1/f(i)$ and\vspace{-1mm}
\[
Z_t \le k \quad \Leftrightarrow \quad \sum_{i=0}^k T[i] >t.
\]
The next lemma provides a coupling between the artificial times~$T_m[i]$ and the exponential times~$T[i]$. The proof is identical to the proof of Lemma~4.1 in \cite{DerMoe09} and is 
omitted. We denote by $\tau_{m,i}=\inf\{\psi(k)\colon \Z[m,k]=i\}$ the artificial first entrance time of $\Z[m,\cdot]$ into state $i$. If $\tau_{m,i}=\psi(k)$, we write $\triangle \tau_{m,i}=k^{-1}$.

\begin{lemma}\label{lem:timeCoup}
There exists a constant $\eta >0$ such that for all $m \in \N$ there is a coupling such that, for all $i \in \N_0$ with $f(i) \triangle \tau_{m,i} \le \frac12$,
\[
T[i] - \eta f(i) \triangle \tau_{m,i} \le T_m[i] \le T[i] +\triangle \tau_{m,i} \qquad \text{almost surely,}
\]
and the random variables $((T[i],T_m[i])\colon i \in \N_0)$ are independent.
\end{lemma}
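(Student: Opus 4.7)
The plan is to derive a conditional tail estimate for $T_m[i]$ that sandwiches it between the tails of two shifted exponentials, and then to realise the coupling via a quantile transform driven by independent uniforms.

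First I would condition on the entrance time $\tau_{m,i} = \psi(k_0)$, so that $\triangle \tau_{m,i} = 1/k_0$. Since $T_m[i] \ge t$ is equivalent to $\Z[m, k^{\star}] = i$, where $k^{\star} := \min\{k \ge k_0 \colon \sum_{j=k_0}^{k} 1/j \ge t\}$, and the indicators that successively arriving vertices connect to $m$ are conditionally independent Bernoulli variables with parameter $f(i)/n$ while the indegree remains $i$, one obtains the closed form
\[
P\bigl(T_m[i] \ge t \,\big|\, \mathcal{F}_{\tau_{m,i}}\bigr) = \prod_{n=k_0}^{k^{\star}-1} \Bigl(1 - \tfrac{f(i)}{n}\Bigr), \qquad \text{with } \sum_{n=k_0}^{k^{\star}-1} \tfrac{1}{n} \in [t - \triangle \tau_{m,i},\, t).
\]

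Next I would apply Taylor bounds on $\log(1-x)$. The inequality $\log(1-x) \le -x$ immediately gives the upper bound $P(T_m[i] \ge t \mid \mathcal{F}_{\tau_{m,i}}) \le \exp(-f(i)(t - \triangle \tau_{m,i})) = P(T[i] \ge t - \triangle \tau_{m,i})$. For the lower bound the hypothesis $f(i) \triangle \tau_{m,i} \le \tfrac12$ forces $f(i)/n \le \tfrac12$ throughout the product, so the sharper bound $\log(1-x) \ge -x - x^2$ becomes available. Using the key summability $\sum_{n \ge k_0} n^{-2} \le 2/k_0 = 2 \triangle \tau_{m,i}$, which localises the quadratic remainder independently of $t$, I obtain
\[
P\bigl(T_m[i] \ge t \,\big|\, \mathcal{F}_{\tau_{m,i}}\bigr) \ge \exp\bigl(-f(i) t - 2 f(i)^2 \triangle \tau_{m,i}\bigr) = P\bigl(T[i] \ge t + 2 f(i) \triangle \tau_{m,i}\bigr),
\]
so that $\eta := 2$ suffices.

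To realise the coupling I would finally enlarge the probability space by an independent family $(U_i)_{i \in \N_0}$ of uniform $[0,1]$ variables and use the quantile transform: set $T[i] := -\log(1 - U_i)/f(i)$, and let $T_m[i]$ be the $U_i$-quantile of its conditional distribution function given $\tau_{m,i}$. Monotonicity of the inverse distribution functions translates the stochastic dominances above into the claimed almost-sure bounds on the event $\{f(i) \triangle \tau_{m,i} \le 1/2\}$, while using a separate $U_i$ for each index yields the asserted joint independence of the family $((T[i], T_m[i]) \colon i \in \N_0)$. The only delicate point is to make the Taylor remainder in the lower bound independent of $t$; this is precisely what the summability $\sum_{n \ge k_0} n^{-2} = O(1/k_0)$ achieves, and the hypothesis $f(i) \triangle \tau_{m,i} \le 1/2$ is exactly what is needed for the quadratic bound $\log(1-x) \ge -x - x^2$ to be valid throughout the product.
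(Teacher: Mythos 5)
Your proof is correct and follows the approach the paper itself defers to (Lemma~4.1 of Dereich--M\"orters~2009): condition on $\tau_{m,i}=\psi(k_0)$, write the conditional survival function of $T_m[i]$ as $\prod_{n=k_0}^{k^*-1}(1-f(i)/n)$, sandwich it between shifted exponential tails using $-x-x^2\le\log(1-x)\le -x$ (the hypothesis $f(i)/k_0\le\tfrac12$ ensures $f(i)/n\le\tfrac12$ along the product, and $\sum_{n\ge k_0}n^{-2}\le 2/k_0$ localises the quadratic remainder), and realise the resulting stochastic dominances as an almost-sure coupling via the quantile transform with fresh uniforms $U_i$. Your explicit constant $\eta=2$ is uniform in $m$, as required. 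One small caveat: your construction gives $(T[i])$ i.i.d.\ exponentials, preserves the joint law of $(T_m[i])$ as the sojourn times of $\Z[m,\cdot]$, and builds each pair $(T[i],T_m[i])$ from fresh randomness conditionally on $\mathcal{F}_{\tau_{m,i}}$; this is exactly what the downstream application needs, but it is not literal joint independence of the pairs across $i$ (nor could it be, since the conditional law of $T_m[i]$ depends on $\tau_{m,i}$, hence on $T_m[0],\dots,T_m[i-1]$). The word ``independent'' in the lemma, inherited from the cited source, should be read in this sequential, conditional sense rather than as unconditional independence of the pairs.
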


\noindent
By definition $\triangle \tau_{m,i} \le m^{-1}$. Hence, Lemma~\ref{lem:timeCoup} yields a coupling such that when $f(k)/m \le \sfrac12 $, then
\[
\sum_{i=0}^k T[i] - k \eta f(k)/m \le \sum_{i=0}^k T_m[i] \le \sum_{i=0}^k T[i] +k/m.
\]
In particular, for $f(k)/m \le \sfrac{1}{2} $, the equivalence~\eqref{eq:degViaT} implies
\begin{align*}
P\Big(\sum_{i=0}^{k} T[i] \ge \psi(n+1)-\psi(m)+\eta k f(k)/m  \Big)&\le  \P(\Z[m,n] \le k)\\
& \le P\Big(\sum_{i=0}^{k} T[i] \ge \psi(n+1)-\psi(m)+ k/m  \Big).
\end{align*}
If $m\le n$ and $m-\vartheta n=O(1)$ for some $\vartheta \in (0,1]$, then $\psi(n+1)-\psi(m) =\sum_{j=m}^{n} j^{-1}=-\log \vartheta +o(1)$. Hence, for $m \le n$, $m-\vartheta n=O(1)$ and $k=O(\log n)$,
\begin{equation}\label{eq:degDistrApp}
\P(\Z[m,n]\le k)=P\Big(\sum_{i=0}^k T[i] \ge - \log \vartheta +o(1)\Big),
\end{equation}
where the random null sequence $o(1)$ is bounded by a deterministic null sequence of order $O({(\log n)^2}/{n})$.
\pagebreak[3]

We proceed by estimating the distribution function of $\sum_{i=0}^k T[i]$. 
The following identity for the incomplete beta function will be of use.

\begin{lemma}\label{lem:aux_sum}
Let $a>0$, $c>0$ and $k \in \N_0$. Then
\[
\sum_{i=0}^k \binom{k}{i} \frac{(-a)^i}{i+c} = a^{-c} \int_0^a x^{c-1} (1-x)^k \, dx.
\]
\end{lemma}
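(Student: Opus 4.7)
The plan is to evaluate the right-hand side directly by expanding the integrand via the binomial theorem and integrating term by term, recovering the left-hand side.

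First I would use the binomial theorem to write
\[
(1-x)^k = \sum_{i=0}^k \binom{k}{i}(-x)^i = \sum_{i=0}^k \binom{k}{i}(-1)^i x^i,
\]
which is a finite sum, so there is no convergence issue and we may interchange summation and integration without comment. Substituting into the right-hand side gives
\[
a^{-c}\int_0^a x^{c-1}(1-x)^k\,dx = a^{-c}\sum_{i=0}^k \binom{k}{i}(-1)^i \int_0^a x^{c+i-1}\,dx.
\]

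Next I would evaluate each inner integral: since $c>0$ and $i\ge 0$ we have $c+i>0$, so
\[
\int_0^a x^{c+i-1}\,dx = \frac{a^{c+i}}{c+i}.
\]
Substituting back and absorbing $a^{-c}\cdot a^{c+i}=a^i$ yields
\[
a^{-c}\int_0^a x^{c-1}(1-x)^k\,dx = \sum_{i=0}^k \binom{k}{i}\frac{(-1)^i a^i}{c+i} = \sum_{i=0}^k \binom{k}{i}\frac{(-a)^i}{i+c},
\]
which is the left-hand side. There is no genuine obstacle here; the identity is a routine exercise in switching sum and integral, and the only hypothesis actually used is $c>0$ to guarantee integrability at the left endpoint. (The parameter $a$ may even be taken complex, but for the application only $a>0$ is needed.) Thus the proof is immediate.
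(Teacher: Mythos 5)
Your proof is correct and is essentially the same computation as the paper's: both rest on expanding $(1-x)^k$ by the binomial theorem and integrating termwise, with $c>0$ ensuring integrability at $0$. The only cosmetic difference is direction: the paper starts from the left-hand side, differentiates $x^c\sum_i\binom{k}{i}\frac{(-x)^i}{i+c}$ to recognize $x^{c-1}(1-x)^k$, and then integrates back, whereas you expand the right-hand side directly; this is the same argument read forwards rather than backwards.
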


\begin{proof}
Denote the left-hand side by $\theta(k,a,c)$. For $x >0$, we have
\[
\frac{\partial}{\partial x} \big[x^c \theta(k,x,c)\big] = \frac{\partial}{\partial x}\Big[ \sum_{i=0}^k \binom{k}{i} (-1)^i \frac{x^{i+c}}{i+c}\Big] = \sum_{i=0}^k \binom{k}{i} (-1)^i x^{i+c-1}= x^{c-1} (1-x)^k.
\]
Integrating both sides between $0$ and $a$ and dividing by $a^{c}$, we obtain the claim.
\end{proof}

\begin{lemma}\label{lem:distZt}
For $k \in \N_0$ and $t \ge 0$,
\begin{equation}\label{eq:distZt}
P(Z_t \ge k+1)= P\Big(\sum_{i=0}^{k} T[i] \le t\Big) = B\big(k+1,\tfrac{\beta}{\gamma}\big)^{-1} \int_{e^{-\gamma t}}^1 x^{\frac{\beta}{\gamma}-1} (1-x)^k \, dx.
\end{equation}
\end{lemma}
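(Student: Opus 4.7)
The plan is the following. The first equality is immediate from the construction: under $P$ the process $Z$ is a pure jump Markov chain whose $(i+1)$-st jump time equals $\sum_{j=0}^{i} T[j]$, so the events $\{Z_t\ge k+1\}$ and $\{\sum_{i=0}^{k} T[i]\le t\}$ coincide. All the work is therefore in computing the distribution function of the sum $S_k:=\sum_{i=0}^{k} T[i]$ of independent exponentials with pairwise distinct rates $\alpha_i:=\gamma i+\beta$.

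I would proceed by partial-fraction decomposition of the Laplace transform
\[
E\bigl[e^{-\lambda S_k}\bigr]=\prod_{i=0}^{k}\frac{\alpha_i}{\alpha_i+\lambda},
\]
which yields the standard hypoexponential density $\rho_{S_k}(s)=\sum_{i=0}^{k}c_i e^{-\alpha_i s}$ with $c_i=\alpha_i\prod_{j\neq i}\alpha_j/(\alpha_j-\alpha_i)$. Using $\alpha_j-\alpha_i=\gamma(j-i)$, the elementary identity $\prod_{j\neq i,\,0\le j\le k}(j-i)=(-1)^i\, i!\,(k-i)!$, and the telescoping product $\prod_{j=0}^{k}(j+\beta/\gamma)=\Gamma(k+1+\beta/\gamma)/\Gamma(\beta/\gamma)$, this simplifies to
\[
c_i=\gamma\,(-1)^i\binom{k}{i}\,\frac{\Gamma(k+1+\beta/\gamma)}{\Gamma(\beta/\gamma)\,k!}.
\]

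Integrating the density on $[0,t]$ then produces
\[
P(S_k\le t)=\frac{\Gamma(k+1+\beta/\gamma)}{\Gamma(\beta/\gamma)\,k!}\sum_{i=0}^{k}\binom{k}{i}\frac{(-1)^i\bigl(1-e^{-\alpha_i t}\bigr)}{i+\beta/\gamma}.
\]
At this point I would invoke Lemma~\ref{lem:aux_sum} twice with $c=\beta/\gamma$: once with $a=1$ to identify $\sum_{i=0}^{k}(-1)^i\binom{k}{i}/(i+\beta/\gamma)=B(\beta/\gamma,k+1)$, and once with $a=e^{-\gamma t}$ to rewrite $\sum_{i=0}^{k}\binom{k}{i}(-e^{-\gamma t})^i/(i+\beta/\gamma)$ as $e^{\beta t}\int_{0}^{e^{-\gamma t}}x^{\beta/\gamma-1}(1-x)^k\,dx$. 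Together with the standard identity $\Gamma(k+1+\beta/\gamma)/(\Gamma(\beta/\gamma)\,k!)=B(k+1,\beta/\gamma)^{-1}$, the constant contribution collapses to $1$ and the remainder equals $B(k+1,\beta/\gamma)^{-1}$ times the complementary beta integral over $[e^{-\gamma t},1]$, which is exactly the claimed expression.

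The calculation is entirely routine. The one step that needs bookkeeping care is the simplification of $c_i$, and in particular tracking the sign $(-1)^i$ so that it matches the sign inside Lemma~\ref{lem:aux_sum} and both sums can be read off as (complete, respectively incomplete) beta integrals.
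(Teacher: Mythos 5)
Your proof is correct and takes essentially the same route as the paper: both derive the hypoexponential density for $\sum_{i=0}^k T[i]$, simplify the coefficient $c_i$ using $f(j)=\gamma j+\beta$ to get $(-1)^i\binom{k}{i}/(i+\beta/\gamma)$ times a beta-function prefactor, integrate, and apply Lemma~\ref{lem:aux_sum} to the resulting sum. The only cosmetic difference is that the paper cites Feller's formula for the density directly while you rederive it via partial fractions of the Laplace transform, and the paper writes the prefactor as $\tfrac{\beta}{\gamma}\prod_{j=1}^k f(j)/(\gamma j)$ rather than in Gamma-function form; these are equivalent.
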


\begin{proof}
Let $k \in \N_0$. The probability density for $\sum_{i=0}^k T[i]$ is given by (see for example Problem 12, Chapter 1 in \cite{Fel71})
\[
t \mapsto \sum_{i=0}^k  \frac{\prod_{j=0, j \not=i}^k f(j)}{\prod_{j=0, j \not= i}^{k} (f(j)-f(i))} f(i) e^{-f(i) t} \mathbbm{1}_{[0,\infty)}(t).
\]
Using $f(j)=\gamma j +\beta$, we can rewrite for all $i \in \{0,\ldots,k\}$,
\[
\frac{\prod_{j=0, j \not=i}^k f(j)}{\prod_{j=0, j \not= i}^{k} (f(j)-f(i))}= \frac{ \frac{\beta}{f(i)} \frac{k!}{k!} \prod_{j=1}^k f(j)}{\gamma^k (-1)^i i! (k-i)!}=\frac{\beta}{\gamma}\Big(\prod_{j=1}^k \frac{f(j)}{\gamma j}\Big) \binom{k}{i} \frac{(-1)^i}{i+\frac{\beta}{\gamma}}.
\]
We obtain
\[
P\Big(\sum_{i=0}^{k} T[i] \le t\Big)=\frac{\beta}{\gamma} \Big(\prod_{j=1}^k \frac{f(j)}{\gamma j}\Big) \sum_{i=0}^k  \binom{k}{i} \frac{(-1)^i}{i+\frac{\beta}{\gamma}} \big(1-e^{-\beta t} e^{-\gamma i t}\big).
\]
The factor in front of the sum equals $B(k+1,\frac{\beta}{\gamma})^{-1}$. Thus, it remains to show that the sum agrees with the integral in (\ref{eq:distZt}). Using Lemma~\ref{lem:aux_sum}, we derive
\begin{align*}
\sum_{i=0}^k  \binom{k}{i} \frac{(-1)^i}{i+\frac{\beta}{\gamma}} \big(1-e^{-\beta t} e^{-\gamma i t}\big)&= \int_0^1 x^{\frac{\beta}{\gamma}-1} (1-x)^k \, dx- e^{-\beta t} (e^{-\gamma t })^{-\frac{\beta}{\gamma}} \int_0^{e^{-\gamma t}} x^{\frac{\beta}{\gamma}-1} (1-x)^k \, dx\\
&= \int_{e^{-\gamma t}}^1 x^{\frac{\beta}{\gamma}-1} (1-x)^k \, dx. \qedhere
\end{align*}
\end{proof}

Now everything is prepared for the proof of Theorem~\ref{thm:degrees}. % We begin with the mean of the empirical degree distribution.

\begin{proposition}\label{pro:ExpDeg}
For all $k \in \N_0$
\[
\E\big[X_{\ge k+1}^{\eps}(n)\big]\to \int_{\eps}^1 \frac{1}{1-\eps} B\big(k+1,\tfrac{\beta}{\gamma}\big)^{-1} \int_{y^{\gamma}}^1 x^{\frac{\beta}{\gamma} -1} (1-x)^k \, dx \, dy \qquad \text{as }n \to \infty. 
\]
\end{proposition}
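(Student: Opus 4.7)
The plan is to express $\E[X_{\ge k+1}^{\eps}(n)]$ as an arithmetic average of the marginal probabilities $\P(\Z[m,n] \ge k+1)$, replace each summand by its deterministic asymptotic value, and recognise the result as a Riemann sum. The starting identity
\[
\E[X_{\ge k+1}^{\eps}(n)] = \frac{1}{n - \fst} \sum_{m = \fst + 1}^{n} \P(\Z[m,n] \ge k+1)
\]
reduces the task to a uniform asymptotic analysis of $\P(\Z[m,n] \ge k+1)$ as $m, n \to \infty$ with $m/n$ ranging over the compact interval $[\eps, 1]$.

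For fixed $k$ the hypothesis $k = O(\log n)$ of \eqref{eq:degDistrApp} is automatic, and applying that formula with $\vartheta = m/n$ gives, uniformly in $m \in \{\fst+1, \dots, n\}$,
\[
\P(\Z[m,n] \le k) = P\Big(\sum_{i=0}^{k} T[i] \ge -\log(m/n) + r_{m,n}\Big),
\]
with $|r_{m,n}|$ bounded by a deterministic sequence of order $O((\log n)^2/n)$. Lemma~\ref{lem:distZt} applied at $t = -\log(m/n)$, so that $e^{-\gamma t} = (m/n)^\gamma$, then identifies the unperturbed limit as
\[
g_k(y) := B\big(k+1, \tfrac{\beta}{\gamma}\big)^{-1} \int_{y^\gamma}^{1} x^{\beta/\gamma - 1}(1-x)^k \, dx,
\]
which is precisely the integrand of the target formula. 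Since $\sum_{i=0}^{k} T[i]$ is a finite convolution of exponentials, its distribution has a bounded density on $[0,\infty)$ and thus a uniformly Lipschitz distribution function; this upgrades the $O((\log n)^2/n)$ shift in the argument to a uniform bound
\[
\sup_{\fst < m \le n} \big| \P(\Z[m,n] \ge k+1) - g_k(m/n) \big| \longrightarrow 0.
\]

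It then remains to observe that $g_k$ is continuous on the compact interval $[\eps,1]$, so the average $\tfrac{1}{n-\fst}\sum_{m=\fst+1}^{n} g_k(m/n)$ is a Riemann sum converging to $\tfrac{1}{1-\eps}\int_{\eps}^{1} g_k(y)\, dy$, which is the claimed expression. The one delicate point of the argument is the uniformity of the error in \eqref{eq:degDistrApp} across all $m \in \{\fst+1,\dots,n\}$: once a single deterministic $o(1)$ sequence is known to control $r_{m,n}$ (as is already implicit in the derivation of \eqref{eq:degDistrApp} from Lemma~\ref{lem:timeCoup}), the rest of the proof is merely a combination of the explicit density identity of Lemma~\ref{lem:distZt} with standard Riemann-sum convergence.
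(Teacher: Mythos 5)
Your argument is correct, and it is a genuinely different route from the paper's. The paper avoids committing to any uniformity claim in \eqref{eq:degDistrApp} by first discretising $\{\fst+1,\dots,n\}$ into blocks of length roughly $\delta\eps n$ and using the monotonicity coupling of Lemma~\ref{lem:degreeCoup} to sandwich $\P(\Z[m,n]>k)$ between the corresponding probability for the block endpoints; \eqref{eq:degDistrApp} is then applied once per block with a \emph{fixed} $\vartheta$, and the limit $\delta\downarrow 0$ recovers the integral. You instead apply \eqref{eq:degDistrApp} directly at the moving parameter $\vartheta=m/n$, observing that the two-sided coupling bound from Lemma~\ref{lem:timeCoup} (the display preceding \eqref{eq:degDistrApp}) produces an error term that is uniformly $O(1/(\eps n))$ over $m\in\{\fst+1,\dots,n\}$ for fixed $k$, and that the hypoexponential distribution of $\sum_{i=0}^k T[i]$ has a bounded density, so its c.d.f.\ is Lipschitz — this converts the uniform argument shift into a uniform probability bound. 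You then finish with a standard Riemann sum. The trade-off: the paper's version never needs to argue uniformity of the $o(1)$ in \eqref{eq:degDistrApp} across $m$ (the monotone coupling does the work), at the cost of an auxiliary parameter $\delta$ and a slightly more elaborate bookkeeping; your version bypasses Lemma~\ref{lem:degreeCoup} entirely and is shorter, but requires one to check that the $O((\log n)^2/n)$ error bound stated after \eqref{eq:degDistrApp} is indeed uniform in $m$ (which it is, since every term there is controlled by $m\ge\fst+1$), and to supply the bounded-density / Lipschitz observation explicitly, which the paper does not need. Both proofs are sound.
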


\begin{proof}
Let $\delta>0$, $\Delta=\lfloor \delta \eps n\rfloor$, $N=1+\lfloor \frac{n-\fst}{\Delta} \rfloor$, $m_j=\fst+1+j \Delta$ for $j=0,\ldots, N-1$, $m_N=n+1$, $\Delta_j=\Delta$ for $j=1,\ldots,N-1$ and $\Delta_N=m_N-m_{N-1} \in [0,\Delta)$. 
Combining Lemma~\ref{lem:degreeCoup} with \eqref{eq:degDistrApp}, we obtain
\begin{align*}
\frac{1}{n-\fst} \sum_{m=\fst+1}^n \P(\Z[m,n]>k) &\le \frac{1}{n-\fst} \sum_{j=0}^{N-1} \Delta_{j+1} \P(\Z[m_j,n] >k) \\
&\le \frac{1}{n -\fst} \sum_{j=0}^{N-1} \delta \eps n\, P\Big(\sum_{i=0}^k T[i] \le - \log(\eps +j \delta \eps) +o(1)\Big),
\end{align*}
and
\[
\limsup_{n \to \infty} \frac{1}{n-\fst} \sum_{m=\fst+1}^n \P(\Z[m,n]>k) \le \sum_{j=0}^{\lceil \frac{1-\eps}{\delta \eps} \rceil}  \frac{\delta \eps}{1-\eps} P\Big(\sum_{i=0}^k T[i] \le -\log\big(\eps +j \frac{\delta \eps}{1-\eps} (1-\eps)\big)\Big).
\]
Taking $\delta \to 0$ on the right-hand side, we conclude
\begin{equation}\label{ubAsympDeg}
\limsup_{n \to \infty} \frac{1}{n-\fst} \sum_{m=\fst+1}^n \P(\Z[m,n]>k) \le \int_0^1 P\Big(\sum_{i=0}^k T[i] \le -\log\big(\eps +y (1-\eps)\big)\Big) \, dy.
\end{equation}
Similarly,
\begin{align*}
\frac{1}{n-\fst} \sum_{m=\fst+1}^n \P(\Z[m,n]>k )& \ge \frac{1}{n -\fst} \sum_{j=1}^N \Delta_j \P(\Z[m_j-1,n]>k)\\
& \ge \frac{1}{n-\fst} \sum_{j=1}^{N-1} \Delta P\Big(\sum_{i=0}^k T[i] \le - \log(\eps +j \delta \eps)+o(1)\Big),
\end{align*}
and as above we see that $\liminf$ satisfies the reverse inequality in~\eqref{ubAsympDeg}. Lemma~\ref{lem:distZt} yields the claim.
\end{proof}

Dereich and M\"orters (pp~1238--1239 in \cite{DerMoe09}) give a simple argument based on Chernoff's inequality to upgrade the convergence of the expected empirical degree distribution to convergence of the empirical degree distribution itself. The proof remains valid for the damaged network and is therefore omitted.
\smallskip

To establish the claimed tail behaviour of $\mu$, we consider the large $k$ asymptotics of $P(Z_t \ge k+1)$ in~\eqref{eq:distZt}. By Stirling's formula, $B(k+1,\beta/\gamma)^{-1} \asymp k^{\beta/\gamma}$, where we write $a(k) \asymp b(k)$ if there exist constants $0< c\le C<\infty$ such that $ca(k) \le b(k) \le Ca(k)$ for all large $k$.
Moreover, 
\begin{align*}
\frac{1}{1-\eps} \int_{\eps}^1 \int_{y^{\gamma}}^1 x^{\frac{\beta}{\gamma} -1} (1-x)^k \, dx \, dy \asymp\int_{\eps}^1 \int_{y^{\gamma}}^1 (1-x)^k \, dx \, dy=\frac{1}{k+1} \int_{\eps}^1 (1-y^{\gamma})^{k+1} \, dy \asymp \frac{(1-\eps^{\gamma})^{k+1}}{(k+1)^2}.
\end{align*}
In the first estimate we used that $x^{\frac{\beta}{\gamma}-1}$ is bounded from zero and infinity; in the second we employed Laplace's method (see for example Section~3.5 of \cite{Mil06}). In particular, $\mu^{\eps}$ has the stated tails. To complete the proof of Theorem~\ref{thm:degrees}, it remains to derive the asymptotic behaviour of the maximal indegree. The statement follows from the next two lemmas.

\begin{lemma}[Upper bound]\label{lem:degUpBd}
Let $c >-\frac{1}{\log(1 - \eps^{\gamma})}$. Then, 
\[
\P\big(\max_{m \in \vseteps_n} \Z[m,n] \le c \log n\big) \to 1 \qquad \mbox{ as $n\to\infty$.}
\]
\end{lemma}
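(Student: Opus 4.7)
The plan is to use a union bound on $m \in \vseteps_n$ together with a tail estimate for $\Z[m,n]$ that comes from the coupling with the sum $\sum_{i=0}^{k}T[i]$ of independent exponential clocks introduced just before the statement.

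First I would set $k_n:=\lfloor c\log n\rfloor$ and note that since $c>-1/\log(1-\eps^{\gamma})$, we have $c\log(1-\eps^{\gamma})<-1$, so there exists $\delta>0$ with $c\log(1-\eps^{\gamma})\le -1-\delta$. The coupling \eqref{eq:degViaT} combined with Lemma~\ref{lem:timeCoup} gives, uniformly in $m\in\vseteps_n$,
\[
\P(\Z[m,n]>k_n)\le P\Big(\sum_{i=0}^{k_n}T[i]\le \psi(n+1)-\psi(m)+k_n/m\Big).
\]
For $m\ge \fst+1\ge \eps n$ one has $\psi(n+1)-\psi(m)=\log(n/m)+O(1/m)$ and $k_n/m=O((\log n)/n)$, and since $m/n\ge \eps+O(1/n)$, the bound on the right-hand side is
\[
P\Big(\sum_{i=0}^{k_n}T[i]\le -\log(m/n)+o(1)\Big).
\]
The main point is that the $o(1)$ can be bounded by a deterministic sequence independent of $m\in\vseteps_n$.

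Next I would apply Lemma~\ref{lem:distZt} with $t=-\log(m/n)+o(1)$, so $e^{-\gamma t}=(m/n)^{\gamma}(1+o(1))\ge \eps^{\gamma}(1+o(1))$. This yields
\[
\P(\Z[m,n]>k_n)\le B\!\big(k_n{+}1,\tfrac{\beta}{\gamma}\big)^{-1}\int_{\eps^{\gamma}(1+o(1))}^{1}x^{\frac{\beta}{\gamma}-1}(1-x)^{k_n}\,dx.
\]
Since $(1-x)^{k_n}$ is decreasing in $x$, I would bound the integrand by its value at the lower endpoint, giving $(1-\eps^{\gamma}+o(1))^{k_n}$ times the harmless constant $\int_{\eps^{\gamma}}^{1}x^{\frac{\beta}{\gamma}-1}\,dx$. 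The factor $(1+o(1))^{k_n}$ remains bounded because $k_n=O(\log n)$ and the $o(1)$ perturbation is $O((\log n)/n)$. Using Stirling's formula, $B(k_n+1,\beta/\gamma)^{-1}\asymp k_n^{\beta/\gamma}$, hence
\[
\P(\Z[m,n]>k_n)\le C(\log n)^{\beta/\gamma}(1-\eps^{\gamma})^{k_n}\le C(\log n)^{\beta/\gamma}n^{c\log(1-\eps^{\gamma})}\le C(\log n)^{\beta/\gamma}n^{-1-\delta}.
\]

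Finally, a union bound over the at most $n$ vertices in $\vseteps_n$ gives
\[
\P\!\big(\max_{m\in\vseteps_n}\Z[m,n]>c\log n\big)\le n\cdot C(\log n)^{\beta/\gamma}n^{-1-\delta}=o(1),
\]
which is the claim. The only mildly delicate point in this plan is making sure that the error terms in the coupling $\psi(n+1)-\psi(m)+k_n/m=-\log(m/n)+o(1)$ and in the change of the lower integration limit are uniform in $m\in\vseteps_n$; this is straightforward because $m\ge \eps n$ throughout, but it is the step that must be written out carefully to justify moving the $o(1)$ outside the exponential $(1-\eps^{\gamma})^{k_n}$ without losing a power of $n$.
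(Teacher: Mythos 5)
Your argument is correct and reaches the conclusion, but the packaging is genuinely different from the paper's. The paper exploits the independence of the indegrees stated in Lemma~\ref{lem:degreeCoup} together with the stochastic domination in part~(ii) of that lemma: replacing every $\Z[m,n]$, $m\in\vseteps_n$, by an i.i.d.\ copy of $\Z[\fst+1,n]$ that dominates it, it writes
\[
\P\big(\max_{m \in \vseteps_n} \Z[m,n] \le k_n\big) \ge \P\big(\Z[\fst+1,n]\le k_n\big)^{n-\fst},
\]
Taylor-expands the power to $\exp\big(-(n-\fst)\,P\big(\sum_{i\le k_n}T[i]\le -\log\eps+o(1)\big)(1+o(1))\big)$, and then invokes the Laplace-type estimate~\eqref{eq:TiAsymp} for the incomplete beta integral. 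You replace the i.i.d.\ product by a union bound over $m\in\vseteps_n$ and replace the invocation of~\eqref{eq:TiAsymp} by a direct computation from Lemma~\ref{lem:distZt}. Both routes compute the same quantity to leading order (since $1-(1-x)^N$ and $Nx$ agree when $Nx\to 0$), and the uniformity over $m\ge\fst+1$ that your union bound requires is exactly the uniformity behind~\eqref{eq:degDistrApp}, available because $m/n\ge\eps+O(1/n)$; you correctly flag this as the only delicate point. The paper's route has the advantage of reusing~\eqref{eq:TiAsymp}, which it needs anyway for the matching lower bound; yours is self-contained but repeats that Laplace computation inline.

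One small slip: to upper-bound $\P(\Z[m,n]>k_n)$ you need the \emph{lower} bound on $\P(\Z[m,n]\le k_n)$, i.e.\ the threshold $\psi(n+1)-\psi(m)+\eta k_n f(k_n)/m$ from Lemma~\ref{lem:timeCoup}, not $\psi(n+1)-\psi(m)+k_n/m$, which comes from the upper bound on $\P(\Z[m,n]\le k_n)$ and points the wrong way. Since $\eta k_n f(k_n)/m=O((\log n)^2/n)$ is still $o(1)$ uniformly on $\vseteps_n$, the conclusion is unaffected; only the displayed error term should be changed.
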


\begin{proof} Write $k_n=\lfloor c \log n \rfloor$, $\underline{m}=\fst+1$ and $\Delta=n-\fst$. Moreover, let $\Z^m[\underline{m},n]$, $m \le n$, be independent copies of $\Z[\underline{m},n]$. 
Lemma~\ref{lem:degreeCoup} and (\ref{eq:degDistrApp}) yield
\begin{equation}\label{eq:UpBdEst}
\begin{split}
\P\big(\max_{m \in \vseteps_n} \Z[m,n] \le c \log n\big)& \ge \P\big(\max_{m \in \vseteps_n}\Z^m[\underline{m},n] \le k_n\big) = \P\big(\Z[\underline{m},n] \le k_n\big)^{\Delta}\\
& =  P\Big(\sum_{i=0}^{k_n} T[i] \ge -\log \eps +o(1)  \Big)^{\Delta}\\
& = \exp\Big( - \Delta P\Big(\sum_{i=0}^{k_n} T[i] \le -\log \eps +o(1)\Big) (1+o(1)) \Big),
\end{split}
\end{equation}
using a Taylor expansion in the last equality.  As above, uniformly for $t$ in compact subintervals of $(0,\infty)$, 
\[
\int_{e^{-\gamma t}}^1 x^{\frac{\beta}{\gamma}-1} (1-x)^{k} \, dx \asymp \int_{e^{-\gamma t}}^1 (1-x)^{k} \, dx \asymp \exp\Big(k \log(1-e^{-\gamma t}) -\log k \Big).
\]
Thus, Lemma~\ref{lem:distZt} and Stirling's formula yield for $\vartheta \in (0,1)$ and $t=-\log \vartheta+o(1)$, as $k \to \infty$,
\begin{equation}\label{eq:TiAsymp}
P\Big(\sum_{i=0}^{k} T[i] \le t \Big) \asymp \exp\Big( k \log(1-e^{-\gamma t})+(\tfrac{\beta}{\gamma}-1) \log k\Big)= \exp\Big( k \log(1-\vartheta^{\gamma}) (1+o(1))\Big).
\end{equation}
Using this estimate for $k=k_n$ and $\vartheta=\eps$, the exponent on the right-hand side of \eqref{eq:UpBdEst} tends to zero as $n \to \infty$ if
$c > -1/\log(1-\eps^{\gamma})$.
\end{proof}

\begin{lemma}[Lower bound]
Let $c < -\frac{1}{\log(1 - \eps^{\gamma})}$. Then,
\[
\P\big(\max_{m \in \vseteps_n} \Z[m,n] \le c \log n\big) \to 0 \qquad \mbox{ as $n\to\infty$.}
\]
\end{lemma}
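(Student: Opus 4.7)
The plan is to dualize the argument of Lemma~\ref{lem:degUpBd}: since the indegrees $(\Z[m,n]\colon m\le n)$ are independent by Lemma~\ref{lem:degreeCoup}, it suffices to produce a subset of $\vseteps_n$ of size of order $n$ on which each factor $\P(\Z[m,n]\le k_n)$ is strictly smaller than $1$ by a polynomially large enough amount. Given $c<-1/\log(1-\eps^\gamma)$, I would first pick $\alpha\in(\eps,1)$ so close to $\eps$ that still $c<-1/\log(1-\alpha^\gamma)$; this is possible by continuity of $\alpha\mapsto-1/\log(1-\alpha^\gamma)$. Set $\hat m=\lfloor\alpha n\rfloor$ and $k_n=\lfloor c\log n\rfloor$. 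By Lemma~\ref{lem:degreeCoup}(i) there is a coupling under which $\Z[m,n]\ge\Z^m[\hat m,n]$ for all $\fst+1\le m\le\hat m$, with $(\Z^m[\hat m,n])_{m}$ i.i.d.\ copies of $\Z[\hat m,n]$. Restricting the maximum to these labels already gives
\[
\P\bigl(\max_{m\in\vseteps_n}\Z[m,n]\le k_n\bigr)\le\P\bigl(\Z[\hat m,n]\le k_n\bigr)^{\hat m-\fst}.
\]

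Next I would estimate the single-vertex probability using the tools already developed in the proof of Lemma~\ref{lem:degUpBd}. With $\vartheta=\alpha$, the equivalence \eqref{eq:degDistrApp} rewrites $\P(\Z[\hat m,n]\le k_n)$ as $1-P(\sum_{i=0}^{k_n}T[i]\le-\log\alpha+o(1))$, and the asymptotic \eqref{eq:TiAsymp} bounds the subtracted probability from below by $\exp(k_n\log(1-\alpha^\gamma)(1+o(1)))=n^{c\log(1-\alpha^\gamma)+o(1)}$. Using $1-x\le e^{-x}$ together with $\hat m-\fst=(\alpha-\eps)n+O(1)$ then yields
\[
\P\bigl(\max_{m\in\vseteps_n}\Z[m,n]\le k_n\bigr)\le\exp\!\Bigl(-\bigl((\alpha-\eps)+o(1)\bigr)\,n^{1+c\log(1-\alpha^\gamma)+o(1)}\Bigr),
\]
and our choice of $\alpha$ makes $1+c\log(1-\alpha^\gamma)$ strictly positive, so the exponent in $n$ remains bounded below by a positive constant and the right-hand side vanishes as $n\to\infty$.

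The only mildly delicate point is ensuring that the $o(1)$ correction coming from \eqref{eq:TiAsymp} does not spoil the polynomial-in-$n$ lower bound on the exponent. This is exactly why I would take $\alpha$ strictly greater than $\eps$: the strict separation creates a positive gap $1+c\log(1-\alpha^\gamma)>0$ which absorbs the asymptotic error term for all large $n$, whereas the naive choice $\alpha=\eps$ would be critical. This is the same small-perturbation trick underlying the upper bound in Lemma~\ref{lem:degUpBd}, and no further ideas are required.
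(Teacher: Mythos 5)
Your proposal is correct and takes essentially the same approach as the paper: restrict the maximum to the oldest $\approx(\alpha-\eps)n$ retained vertices, apply Lemma~\ref{lem:degreeCoup}(i) to lower-bound them by i.i.d.\ copies of $\Z[\hat m,n]$, and combine \eqref{eq:degDistrApp} and \eqref{eq:TiAsymp} to show the single-vertex probability $P(\sum_{i=0}^{k_n}T[i]\le -\log\alpha+o(1))$ decays only polynomially in $n$ with exponent strictly above $-1$. The paper parameterizes the cutoff as $\alpha=\eps(1+\delta)$ with $\delta$ arbitrary while you choose $\alpha\in(\eps,1)$ directly by continuity, but this is a cosmetic difference; the perturbation device is identical.
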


\begin{proof} The idea of the proof is to restrict the maximum to an arbitrarily small proportion of the oldest vertices.
Let $\delta>0$ and write $k_n:=\lfloor c \log n\rfloor$ , $\Delta= \lfloor \delta \eps n\rfloor$ and $\overline{m}= \fst+\Delta$. Moreover, let $\Z^m[\overline{m},n]$, $m\le n$, be independent copies of $\Z[\overline{m},n]$. %For two real-valued random variables $X,Y$, we write $X \succeq Y$, when $X$ stochastically dominates $Y$. 
According to  Lemma~\ref{lem:degreeCoup} there is a coupling such that
\[
\max_{m \in \vseteps_n} \Z[m,n] \ge \max_{m=\fst+1,\ldots,\fst +\Delta} \Z[m,n] \ge \max_{m=\fst+1,\ldots,\fst +\Delta} \Z^m[\overline{m},n].
\]
Arguing as in \eqref{eq:UpBdEst}, (\ref{eq:degDistrApp}) yields
\begin{align*}
\P\big(\max_{m \in \vseteps_n} \Z[m,n] \le c \log n\big)
%& \le  \P\big(\max_{m=\fst+1,\ldots,\fst +\Delta} \Z^m[\overline{m},n] \le k_n\big)=  \P( \Z[\overline{m},n] \le k_n)^{\Delta}\\
%& = P\Big(\sum_{i=0}^{k_n} T[i] \ge -\log (\eps (1+\delta)) +o(1) \Big)^{\Delta}\\
%& = 
\le \exp\Big( - \Delta P\Big(\sum_{i=0}^{k_n} T[i] \le -\log (\eps (1+\delta)) +o(1) \Big)\big(1+o(1)\big) \Big).
\end{align*}
Now \eqref{eq:TiAsymp} with $\vartheta=\eps (1+\delta)$ implies that the exponent on the right-hand side tends to $-\infty$ if 
$c< -1/\log(1-(\eps (1+\delta))^{\gamma})$. Since $\delta$ was arbitrary, the claim is established.
\end{proof}

\subsection{Distances}\label{sec:distances}
In this section, we study the typical distance between two uniformly chosen vertices in $\grapheps_n$ and prove Theorem~\ref{thm:distances}.
We write $\triangle \Z[m,n]=\Z[m,n+1]-\Z[m,n]$ and, for $m \ge n$, $\Z[m,n]=0$. In the graph, the indegree of vertex $m$ at time $m$ is zero by definition, but we will also use the distribution of the process $(\Z[m,n]\colon n \ge m)$ for different initial  values. 
Formally, the evolution of $\Z[m,\cdot]$ with initial value $k$ is obtained by using the attachment rule $g(l):=f(k+l)$ and we denote its distribution by~$\P^k$, using~$\E^k$ for
the corresponding expectation; we abbreviate~$\P:=\P^0$, $\E:=\E^0$.
We further write $\threshold:=\inf\{n \in \N\colon f(n)/n \le 1\} \vee 2$. 
Note that $\gamma <1$ implies $\threshold \in \N$. 
We observe some facts about the indegree distribution. These are adaptations of results in \cite{DerMoe13}.

\begin{lemma}[Lemma~2.7 in \cite{DerMoe13}]\label{ZmeanEstim} For all $k \in \N_0$ and $m,n\in \N$ with $k \le m$, $\threshold \le m \le n$,
\begin{equation}\label{triangle_bound}
\P^k(\triangle \Z[m,n]=1) \le \frac{f(k)}{(m-1)^{\gamma} n^{1-\gamma}}.
\end{equation}
\end{lemma}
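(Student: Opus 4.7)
The argument is short and follows a standard "reduce to a first-moment estimate, then iterate" pattern. The first step would be to observe that the probability that vertex $n+1$ attaches to vertex $m$, given the graph up to time $n$, equals $(f(\Z[m,n])/n)\wedge 1$, so
\[
\P^k(\triangle \Z[m,n]=1) \;=\; \E^k\!\Bigl[\tfrac{f(\Z[m,n])}{n}\wedge 1\Bigr] \;\le\; \frac{1}{n}\,\E^k\bigl[f(\Z[m,n])\bigr].
\]
It therefore suffices to prove the first-moment bound $\E^k[f(\Z[m,n])] \le f(k)\bigl(n/(m-1)\bigr)^{\gamma}$.

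The heart of the argument is to exploit the affine form $f(l+1)-f(l)=\gamma$ to derive a linear one-step recursion. Conditioning on $\Z[m,n]$ and using the truncation bound $(f(\Z[m,n])/n)\wedge 1 \le f(\Z[m,n])/n$ once more gives
\[
\E^k\bigl[f(\Z[m,n+1])\bigm|\Z[m,n]\bigr] \;=\; f(\Z[m,n]) + \gamma\Bigl(\tfrac{f(\Z[m,n])}{n}\wedge 1\Bigr) \;\le\; f(\Z[m,n])\Bigl(1+\tfrac{\gamma}{n}\Bigr).
\]
Taking expectations and iterating from $n=m$, where $\Z[m,m]=k$ by definition of $\P^k$ so that $f(\Z[m,m])=f(k)$, yields
\[
\E^k\bigl[f(\Z[m,n])\bigr] \;\le\; f(k)\prod_{j=m}^{n-1}\Bigl(1+\tfrac{\gamma}{j}\Bigr).
\]

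To conclude, I would estimate the product using $1+x\le e^x$ together with the harmonic-sum bound $\sum_{j=m}^{n-1} 1/j \le \int_{m-1}^{n-1} dx/x \le \log\bigl(n/(m-1)\bigr)$, which turns the product into $\bigl(n/(m-1)\bigr)^{\gamma}$. Dividing through by $n$ delivers the stated inequality.

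There is no substantial obstacle in the argument: the truly useful observation is simply that affine $f$ produces a linear supermartingale-type recursion for $\E^k[f(\Z[m,\cdot])]$, and that the cap $\wedge 1$ in the connection probability is harmless in the upper direction. The conditions $k\le m$ and $m\ge \threshold$ are not needed for the algebraic manipulations; they appear because they describe the regime in which the lemma will be applied, with $m\ge \threshold$ guaranteeing in particular that the right-hand side $f(k)/[(m-1)^{\gamma}n^{1-\gamma}]$ is genuinely a probability (bounded by $1$) for the relevant range of $n$.
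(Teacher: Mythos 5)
Your proof is correct and essentially matches the paper's argument: the paper observes that $\bigl(f(\Z[m,n])\prod_{i=m}^{n-1}\tfrac{1}{1+\gamma/i}\colon n\ge m\bigr)$ is a martingale, so $\E^k[f(\Z[m,n])]=f(k)\prod_{i=m}^{n-1}(1+\gamma/i)$, and then applies the same product estimate you use; the hypotheses $m\ge\threshold$ and $k\le m$ guarantee $f(\Z[m,n])\le n$, so the cap $\wedge 1$ is never active and the martingale identity is exact. Your supermartingale formulation replaces these equalities with inequalities, which is a touch more robust (it does not need the cap to be inactive) but otherwise the mechanism is identical.
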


\begin{proof}
Observe that $(f(\Z[m,n]) \prod_{i=m}^{n-1} \sfrac{1}{1+\gamma/i}\colon n \ge m)$ is a martingale and therefore
\[
\P^k(\triangle \Z[m,n]=1)=\E^k\Big[\frac{f(\Z[m,n])}{n}\Big]=\frac{f(k)}{n} \prod_{i=m}^{n-1} (1+\gamma/i) \le \frac{f(k)}{(m-1)^{\gamma} n^{1-\gamma}}.\qedhere
\]
\end{proof}

\begin{lemma}[Lemma~2.10 in \cite{DerMoe13}]\label{coupFact} For all $k \in \N_0$, $m,m' \in \N$, $\threshold \le m \le m'$, $k \le m$, there exists a coupling of the process
$(\Z[m,n]\colon n\ge m)$ under the conditional probability $\P^k(\cdot|\triangle \Z[m,m']=1)$ and the process $(\Z[m,n]\colon n\ge m)$ under~$\P^{k+1}$ such that, apart from 
time $m'$, the jump times of the first process are a subset of the jump times of the latter.
\end{lemma}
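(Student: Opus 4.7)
The plan is to identify the conditional law of $(\Z[m,n])_{n\ge m}$ under $\P^k(\cdot\mid \triangle\Z[m,m']=1)$ via a Doob $h$-transform, and then to build a step-by-step coupling with the unconditioned process under $\P^{k+1}$ maintaining a suitable stochastic ordering. Using that $f(\Z[m,n])\prod_{i=m}^{n-1}(1+\gamma/i)^{-1}$ is a martingale (cf.\ the proof of Lemma~\ref{ZmeanEstim}), I would write
\[
h(l,n):=\P^k(\triangle\Z[m,m']=1\mid \Z[m,n]=l) = \frac{f(l)}{m'}\prod_{i=n}^{m'-1}\bigl(1+\gamma/i\bigr) \qquad (n\le m'),
\]
so that the $h$-transformed jump probability at step $n<m'$ from state $l$ to $l+1$ is
\[
\hat p(l,l+1;n) = \frac{f(l)}{n}\cdot \frac{h(l+1,n+1)}{h(l,n)} = \frac{f(l+1)}{n+\gamma},
\]
the jump at step $n=m'$ is forced by the conditioning, and for $n>m'$ the transitions coincide with the unconditioned $\P^k$ rates $f(\cdot)/n$.

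With this in hand I would construct the coupling on a single probability space, driving each step by one uniform random variable. Write $Z_1$ for the conditioned process with $Z_1(m)=k$ and $Z_2$ for the $\P^{k+1}$-process with $Z_2(m)=k+1$, and carry the invariant $Z_2(n)\ge Z_1(n)+1$ for $m\le n\le m'$. The base case is immediate, and for $m\le n<m'$ the invariant together with monotonicity of $f$ and $\gamma\ge 0$ yields
\[
\hat p(Z_1(n),Z_1(n)+1;n) = \frac{f(Z_1(n)+1)}{n+\gamma} \;\le\; \frac{f(Z_2(n))}{n} = p(Z_2(n),Z_2(n)+1;n),
\]
so the driving uniforms can be arranged so that a jump of $Z_1$ at step $n$ forces a jump of $Z_2$, and the invariant is preserved because the gap $Z_2-Z_1$ never decreases.

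At $n=m'$ the jump of $Z_1$ is mandatory while $Z_2$ may or may not jump; this is the single step excluded from the subset claim. The invariant $Z_2(m')\ge Z_1(m')+1$ ensures $Z_2(m'+1)\ge Z_2(m')\ge Z_1(m')+1 = Z_1(m'+1)$, so we enter the post-$m'$ regime with $Z_2\ge Z_1$. For $n>m'$ both chains have rate $f(\cdot)/n$ at their current value, and the standard monotone coupling (same uniform driver) preserves $Z_2\ge Z_1$ and embeds the jumps of $Z_1$ into those of $Z_2$ thereafter.

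The main obstacle is the clean $h$-transform identity $\hat p(l,l+1;n)=f(l+1)/(n+\gamma)$: only because $f$ is affine with slope $\gamma$ does the ratio $h(l+1,n+1)/h(l,n)$ collapse to $f(l+1)/[f(l)(1+\gamma/n)]$, and only then does the factor $(1+\gamma/n)^{-1}$ precisely absorb the shift from the rate-$f(l)/n$ in the original process to the rate-$f(l+1)/(n+\gamma)$ in the conditioned one, so that the latter is dominated by the unconditional rate at the strictly larger state $Z_2(n)$. Once that identity is secured, the invariant book-keeping through the forced jump at $m'$ is routine.
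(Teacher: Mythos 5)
Your proof is correct: the Doob $h$-transform computation is right (the martingale from Lemma~\ref{ZmeanEstim} gives exactly $h(l,n)=\frac{f(l)}{m'}\prod_{i=n}^{m'-1}(1+\gamma/i)$, whence $\hat p(l,l+1;n)=f(l+1)/(n+\gamma)$ for affine $f$), the invariant $Z_2(n)\ge Z_1(n)+1$ on $\{m,\dots,m'\}$ is preserved by the common-uniform coupling since $f(Z_1(n)+1)/(n+\gamma)\le f(Z_2(n))/n$, and the degradation to $Z_2\ge Z_1$ across the forced jump at $m'$ is exactly what the lemma's "apart from time $m'$" caveat accommodates. The paper omits the proof, deferring to Lemma~2.10 of Dereich--M\"orters (2013), and your $h$-transform plus monotone-coupling argument is the natural reconstruction of that omitted proof.
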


The proof of the lemma is similar to the proof of Lemma~2.10 in \cite{DerMoe13} and we omit it. After these preliminary results, we now begin our analysis of typical distances in the network $(\grapheps_n\colon n\in\N)$. Recall, that for this type of questions, we consider $\grapheps_n$ to be an undirected graph. For $v, w \in \vseteps_n$ and $h \in \N_0$, let
\[
\pathset_h(v,w):=\{(v_0,\ldots,v_h)\colon v_i \in \vseteps_n, v_i \not=v_j \mbox{ for }  \, i \not= j, v_0=v, v_h=w\}
\]
be the set of all self-avoiding paths of length $h$ between $v$ and $w$, and
$\pathset_h(v)=\{\path\colon \path \in \pathset_h(v,w) \text{ for some }w\in \vseteps_n\}$
the set of all self-avoiding paths of length $h$ starting in $v$.
%Notice that $\pathset_0(v,w)=\emptyset$ if $v\not=w$, $\pathset_0(v,v)=\{(v)\}$, and $\pathset_h(v,v)=\emptyset$ for all $h \in \N$.

\begin{definition}
Let $\theta \in (0,\infty)$ and $\graph=(\vset,\eset)$ be an undirected graph with $\vset \subseteq \N$. A self-avoiding path $\path =(v_0,\ldots,v_h)$ in $\graph$ is $\theta$-admissible (or  admissible) if, for all $i \in \{1,\ldots,h\}$, we have $\{v_{i-1},v_i\} \in \eset$ and
\begin{equation}\label{def:admiss}
\big|\big\{w \in \vset\colon v_{i-1} <w\le v_i, \{v_{i-1},w\} \in \eset\big\}\big| \le \theta .
\end{equation}
\end{definition}
Note that \eqref{def:admiss} is automatically satisfied if $v_i<v_{i-1}$. In the graph~$\grapheps_n$, condition \eqref{def:admiss} can be written as $\Z[v_{i-1},v_i] \le \theta$.
We further denote, for $v,w \in \vseteps_n$, $h \in \N_0$ and $\theta \in (0,\infty)$,
\begin{align*}
N_{h}^{\theta}(v,w)&:=\big|\{\path \in \pathset_h(v,w)\colon \path \,\text{is }\theta\text{-admissible in }\grapheps_n\}\big|,\\
N_{h}^{\theta}(v)&:=\big|\{\path \in \pathset_h(v)\colon \path\, \text{is }\theta\text{-admissible in }\grapheps_n\}\big|,
\end{align*}
and, for $h >0$, $N_{\le h}^{\theta}(v)=\sum_{k=0}^{\lfloor h\rfloor} N_{k}^{\theta}(v)$, $N_{\le h}^{\theta}(v,w)=\sum_{k=0}^{\lfloor h\rfloor} N_{k}^{\theta}(v,w)$.  The dependence
of $\pathset_h(v,w)$, $\pathset_h(v)$, $N_h^{\theta}(v,w)$ etc.\ on~$n$ is suppressed in the notation, but it will always be clear from the context which graph is considered.
We write $\IBP^{\eps}(f)$ for the idealized branching process with type space $[\log \eps,0]\times \{\leftp,\rightp\}$ generated with attachment rule $f$ if we want to emphasize $f$ and $\eps$. The proof of the following lemma is deferred to Section \ref{sec:domi}. 

\begin{lemma}\label{lem:ExpPathGTree}
Let $\delta>0$ such that $\gamma (1+\delta) <1$, $\underline{\eps}\in (0,\eps)$, and $(\theta_n \colon n \in \N)$ a sequence of positive numbers
with $\theta_n=o(n)$. For all sufficiently large $n$, $v_0 \in \vseteps_n$, and $h \in \N_0$,
\[
\E\big[N_h^{\theta_n}(v_0)\big] \le E_{(s_n(v_0),\leftp)}\big[\big|\IBP_h^{\underline{\eps}}((1+\delta)f)\big|\big] \qquad \text{where }s_n(v_0):=-\sum_{k=v_0}^{n-1} \frac{1}{k}.
\]
\end{lemma}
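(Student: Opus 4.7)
The plan is to expand $\E[N_h^{\theta_n}(v_0)]$ as an ordered sum over all $(h+1)$-tuples of distinct candidate vertices and to bound the joint probability that a given tuple forms an admissible path by a product of one-step ``attachment'' probabilities, each of which matches, to leading order, the expected offspring density of a single particle in the $\IBP$. The independence of the processes $(\Z[m,\cdot])_{m\le n}$ across $m$ (Lemma~\ref{lem:degreeCoup}) decouples contributions from different vertices, and the coupling of Lemma~\ref{coupFact} converts each conditioning on a previously-realised edge into a deterministic increase of the starting indegree by one unit. Since $f(k+l)\le f(k)+\gamma l$, the accumulated conditionings inflate the effective attachment rule by at most one additional unit per step, which for $n$ sufficiently large can be absorbed into a per-step multiplicative factor of at most $1+\delta$.

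To translate into branching-process coordinates I use the time change $s_n(v):=-\sum_{k=v}^{n-1}k^{-1}$, which embeds $\vseteps_n$ essentially into $[\log\eps,0]$. A step from $v_{i-1}$ to an older child $v_i<v_{i-1}$ is a left-offspring event with mark~$\rightp$: Lemma~\ref{ZmeanEstim} bounds its probability by $\beta/((v_i-1)^\gamma(v_{i-1}-1)^{1-\gamma})$, and the change of variables $\sum_{v_i}\,\cdot\approx\int\cdot\,v_i\,ds_n(v_i)$ turns the corresponding sum into the density $\beta e^{(1-\gamma)t}\,dt$ on $t\in[\log\eps-s_n(v_{i-1}),0]$, matching exactly the left-offspring intensity in the $\IBP$. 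A step to a younger child $v_i>v_{i-1}$ is a right-offspring event with mark~$\leftp$: the edge corresponds to a jump of $\Z[v_{i-1},\cdot]$ at step $v_i$, and after the artificial-time change $\psi(v_i)-\psi(v_{i-1})\to t$ the law of these jumps coincides with that of $Z$ or $\hat Z$ depending on the mark of $v_{i-1}$, which is precisely the $\IBP$ right-offspring law. The admissibility bound $\Z[v_{i-1},v_i]\le\theta_n$ together with $\theta_n=o(n)$ is used here to ensure that $f(\Z[v_{i-1},v_i])/v_i$ stays within a factor $1+\delta$ of the continuous jump rate, and the shrinking $\eps\to\underline{\eps}$ provides the room to discard the $O(1)$ contribution of boundary vertices near $\lfloor\eps n\rfloor$.

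Denoting by $\mathcal{A}$ the mean offspring operator of $\IBP^{\underline{\eps}}((1+\delta)f)$, the per-step bound $\sum_{v_i}\,\cdot\le \mathcal{A}\mathbf{1}(s_n(v_{i-1}),\mar(v_{i-1}))$ iterates to
\[
\E[N_h^{\theta_n}(v_0)]\le \mathcal{A}^h\mathbf{1}\big(s_n(v_0),\leftp\big)=E_{(s_n(v_0),\leftp)}\big[\big|\IBP_h^{\underline{\eps}}((1+\delta)f)\big|\big],
\]
with the root mark $\leftp$ reflecting that $v_0$ has no prior vertex on its left in the path. The main obstacle will be the bookkeeping of the successive conditionings: a naive union-style argument would produce an uncontrolled compounding of the small corrections along the path, whereas what is needed is a clean per-step factor $1+\delta$ uniform in $h$. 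This is exactly what Lemma~\ref{coupFact} enables, by upgrading each conditional edge event into an unconditional stochastic domination by a process with starting indegree increased by one, so that the global bound follows from a straightforward iteration of the one-step estimate.
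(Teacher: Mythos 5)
Your proposal follows essentially the same route as the paper: the one-step edge-probability bound via Lemma~\ref{ZmeanEstim}, the use of Lemma~\ref{coupFact} to convert the single relevant conditional event into an unconditional one with indegree shifted by one, the time change $s_n$ to pass to branching coordinates, and the inflation of the attachment rule by $(1+\delta)$ (together with $\theta_n=o(n)$ and the retreat to $\underline{\eps}<\eps$) to absorb the discrete-to-continuous discrepancy. The one structural difference is that the paper routes the comparison through an intermediate finite-$n$ labelled tree $\Ftree_n^{\eps,\theta_n}(v_0)$ (Lemma~\ref{lem:graphTreeDomi}), which cleanly separates the combinatorial bookkeeping of conditional edge events (the Figure-4 casework, encoded via marks) from the probabilistic discrete-vs-continuous domination; you compress these into a single pass, which is feasible but interleaves the two kinds of estimate and would make the write-up harder to audit. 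One imprecision worth flagging: the right-offspring law in the network does \emph{not} coincide with that of $Z$ or $\hat Z$ after the time change --- what is actually proved is a stochastic domination by the jump process driven by the inflated rule $(1+\delta)f$ (e.g.\ via the inequality $e^{-\varrho}\le 1-\eta$ once $\varrho(1-\varrho/2)\ge\eta$), and this is exactly where $\theta_n=o(n)$ and $\gamma(1+\delta)<1$ enter; you do identify that role correctly, so this reads as loose language rather than a genuine gap.
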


We are now in the position to prove Theorem~\ref{thm:distances}.

\begin{proof}[Proof of Theorem~\ref{thm:distances}]
Let $v,w \in \vseteps_n$, $h \in \N$. With $\theta_n:=(\log n)^2$, \eqref{eq:MaxDegThm} yields
\begin{equation}\label{eq:dist1part}
\begin{split}
\P\big(\dist_{\grapheps_n}(v,w) \le h\big)&\le \P\Big(\dist_{\grapheps_n}(v,w) \le h, \max_{m \in \vseteps_n} \Z[m,n] \le \theta_n\Big)+ \P\Big(\max_{m \in \vseteps_n}\Z[m,n] >\theta_n\Big)\\
& \le \P\big(N_{\le h}^{\theta_n}(v,w) \ge 1\big)+o(1),
\end{split}
\end{equation}
where the error is uniform in $v,w$ and $h$. 
Markov's inequality yields, for every $v,w \in \vseteps_n$ with $v \not =w$ and for every $h \in \N$,
\begin{equation}\label{eq:NlehProb}
\P\big(N_{\le h}^{\theta_n}(v,w) \ge 1\big) \le  \E\big[N_{\le h}^{\theta_n}(v,w)\big] =\sum_{k=1}^h \sum_{\path \in \pathset_k(v,w)}\P\big(\path\, \text{is }\theta_n\text{-admissible in }\grapheps_n\big).
\end{equation}
We write $w_i^{+}:=v_{i-1}\vee v_i$, $w_i^{-}:=v_{i-1} \land v_i$ and $\mathcal{E}_i:=\{\triangle \Z[w_i^{-},w_i^+-1]=1, \Z[v_{i-1},v_i]\le \theta_n\}$ for every 
$i\in \{1,\ldots, k\}$.  Then we have
\begin{equation}\label{eq:admissEstim}
\begin{split}
\P(\path \text{ is }\theta_n\text{-admissible in } \grapheps_n)&=\P\Big(\bigcap_{i=1}^k\big\{\triangle \Z[w_i^{-},w_i^+-1]=1, \Z[v_{i-1},v_i]\le \theta_n\big\}\Big)\\
&= \P\Big(\mathcal{E}_{k}\Big|\bigcap_{i=1}^{k-1} \mathcal{E}_i\Big)\,\P((v_0,\ldots,v_{k-1}) \text{ is }\theta_n\text{-admissible in } \grapheps_n).
\end{split}
\end{equation}
To estimate the probability $\P(\mathcal{E}_{k}|\bigcap_{i=1}^{k-1} \mathcal{E}_i)$, we first note that the only edge in the self-avoiding path $\path$ on whose presence
the event $\{v_{k-1},v_k\}\in\eseteps_n$ can depend is $\{v_{k-2},v_{k-1}\}$. The possible arrangements of these two edges are sketched in Figure~4. When $v_{k-2}<v_{k-1}$ (cases A,B,C in Figure 4), then we, in addition, have knowledge of edges whose left vertex is $v_{k-2}$. However, these are always independent of $\{v_{k-1},v_k\}$. If $v_{k-1}<v_k$ (cases A,D,E in Figure 4), then event $\mathcal{E}_k$ requires that $\Z[v_{k-1},v_k]\le \theta_n$. Since edges with left vertex $v_{k-1}$ depend only on edges whose left vertex is also $v_{k-1}$, the only relevant conditioning occurs in cases D and E in Figure 4 by requiring $\{v_{k-1},v_{k-2}\}$ to be present.

\begin{center}
\begin{tikzpicture}
[vertex/.style={circle,draw=black,fill=black,thick,inner sep=0pt,minimum size=1mm}]

\foreach \y in {0,1}
\foreach \x in {0,1,2}
{
\draw (\x*4.5,\y*2) rectangle (\x*4.5+4.5,\y*2+2); %box
\draw[semithick] (\x*4.5,\y*2)+ (0.3,0.7) -- (\x*4.5+4.5-0.3,\y*2+0.7); %line
\node[vertex] (\x-\y-A) at (\x*4.5+0.3+0.5,\y*2+0.7)  {}; %leftmost vertex. 
\node[vertex] (\x-\y-B) at (\x*4.5+0.3+1.95,\y*2+0.7)  {}; %middle vertex
\node[vertex] (\x-\y-C) at (\x*4.5+4.5-0.3-0.5,\y*2+0.7)  {}; %rightmost vertex
\node[circle,draw=black,inner sep=0pt,minimum size=4mm] (\x-\y-D) at (\x*4.5+0.3,\y*2+1.7){}; %for label
}

%twice to the right

\draw[semithick] (0-1-A) to [bend left=70]  (0-1-B); %edges
\draw[semithick] (0-1-B) to [bend left=70]  (0-1-C);
\node [below= 3pt] at (0-1-A) {\small{$v_{k-2}$}}; %vertex labels
\node [below= 3pt] at (0-1-B) {\small{$v_{k-1}$}};
\node [below= 3pt] at (0-1-C) {\small{$v_{k}$}};

\draw[red,dashed] (0-1-A) to [bend left=40]  +(0.5,0);%edges to check number of right-neighbours
\draw[red,dashed] (0-1-A) to [bend left=50]  +(0.8,0);
\draw[red,dashed] (0-1-A) to [bend left=60]  +(1,0);

\draw[red,dashed] (0-1-B) to [bend left=40]  +(0.5,0);
\draw[red,dashed] (0-1-B) to [bend left=50]  +(0.8,0);
\draw[red,dashed] (0-1-B) to [bend left=60]  +(1,0);

\node at (0-1-D) {A};

%left-right forward

\draw[semithick] (0-0-A) to [bend left=70]  (0-0-C); %edges
\draw[semithick] (0-0-A) to [bend left=60]  (0-0-B);
\node [below= 3pt] at (0-0-A) {\small{$v_{k-1}$}}; %vertex labels
\node [below= 3pt] at (0-0-B) {\small{$v_{k-2}$}};
\node [below= 3pt] at (0-0-C) {\small{$v_{k}$}};

\draw[red,dashed] (0-0-A) to [bend left=40]  +(0.8,0);%edges to check number of right-neighbours
\draw[red,dashed] (0-0-A) to [bend left=60]  +(2,0);

\node at (0-0-D) {D};

%twice to the left

\draw[semithick] (2-0-A) to [bend left=70]  (2-0-B);
\draw[semithick] (2-0-B) to [bend left=70]  (2-0-C);
\node [below= 3pt] at (2-0-A) {\small{$v_{k}$}};
\node [below= 3pt] at (2-0-B) {\small{$v_{k-1}$}};
\node [below= 3pt] at (2-0-C) {\small{$v_{k-2}$}};

\node at (2-0-D) {F};

%right-left forward

\draw[semithick] (1-1-A) to [bend left=70]  (1-1-C); %edges
\draw[semithick] (1-1-B) to [bend left=60]  (1-1-C);
\node [below= 3pt] at (1-1-A) {\small{$v_{k-2}$}}; %vertex labels
\node [below= 3pt] at (1-1-B) {\small{$v_{k}$}};
\node [below= 3pt] at (1-1-C) {\small{$v_{k-1}$}};

\draw[red,dashed] (1-1-A) to [bend left=40]  +(0.8,0);%edges to check number of right-neighbours
\draw[red,dashed] (1-1-A) to [bend left=50]  (1-1-B);
\draw[red,dashed] (1-1-A) to [bend left=60]  +(2,0);

\node at (1-1-D) {B};

%right-left backward

\draw[semithick] (2-1-A) to [bend left=70]  (2-1-C); %edges
\draw[semithick] (2-1-B) to [bend left=60]  (2-1-C);
\node [below= 3pt] at (2-1-A) {\small{$v_{k}$}}; %vertex labels
\node [below= 3pt] at (2-1-B) {\small{$v_{k-2}$}};
\node [below= 3pt] at (2-1-C) {\small{$v_{k-1}$}};

\draw[red,dashed] (2-1-B) to [bend left=30]  +(0.5,0);%edges to check number of right-neighbours
\draw[red,dashed] (2-1-B) to [bend left=40]  +(0.8,0);
\draw[red,dashed] (2-1-B) to [bend left=50]  +(1,0);

\node at (2-1-D) {C};

%left-right backward

\draw[semithick] (1-0-A) to [bend left=70]  (1-0-C); %edges
\draw[semithick] (1-0-A) to [bend left=60]  (1-0-B);
\node [below= 3pt] at (1-0-A) {\small{$v_{k-1}$}}; %vertex labels
\node [below= 3pt] at (1-0-B) {\small{$v_{k}$}};
\node [below= 3pt] at (1-0-C) {\small{$v_{k-2}$}};

\draw[red,dashed] (1-0-A) to [bend left=30]  +(0.5,0);%edges to check number of right-neighbours
\draw[red,dashed] (1-0-A) to [bend left=40]  +(0.8,0);
\draw[red,dashed] (1-0-A) to [bend left=50]  +(1,0);

\node at (1-0-D) {E};

\end{tikzpicture}\\

{\small {\bf Figure 4:} Possible interactions of two edges on a self-avoiding path. The red, dashed edges have to be considered to decide if the number of right-neighbours is small enough to declare the path admissible.}
\end{center}

We deduce
\[
\P\Big(\mathcal{E}_k\Big|\bigcap_{i=1}^{k-1} \mathcal{E}_i \Big)=\begin{cases}
\P(\triangle \Z[w_k^{-},w_k^+-1]=1, \Z[v_{k-1},v_k]\le \theta_n) & \text{in A,B,C and F,}\\
\P(\triangle \Z[v_{k-1},v_k-1]=1, \Z[v_{k-1},v_k]\le \theta_n| \triangle \Z[v_{k-1},v_{k-2}-1]=1) & \text{in D and E.}
\end{cases}
\]
Using Lemma \ref{coupFact} and \eqref{triangle_bound}, we can bound the probability in both cases by $f(1)/(\eps n)$. Combining this estimate with \eqref{eq:NlehProb} and \eqref{eq:admissEstim}, we obtain
\[
\P\big(N_{\le h}^{\theta_n}(v,w) \ge 1\big)\le \sum_{k=1}^h \sum_{\path \in \pathset_{k-1}(v)} \frac{f(1)}{\eps n} \P(\path \text{ is }\theta_n\text{-admissible in }\grapheps_n)= \frac{f(1)}{\eps n} \sum_{k=0}^{h-1}\E\big[N_{k}^{\theta_n}(v)\big].
\]
Lemma~\ref{lem:ExpPathGTree} yields for small $\bar{\delta}>0$ and $\underline{\eps}:=\eps -\bar{\delta} >0$, 
\[
\P\big(N_{\le h}^{\theta_n}(v,w) \ge 1\big) \le \frac{f(1)}{\eps n}\, \sum_{k=0}^{h-1} E_{(s_n(v),\leftp)}\Big[\big|\IBP_k^{\underline{\eps}}((1+\bar{\delta})f)\big|\Big]. 
\]
We denote by $\bar{\rho}$ the spectral radius of the operator $A$ associated to $\IBP^{\underline{\eps}}((1+\bar{\delta})f)$ and by $\bar{\eigenfct}$ the corresponding eigenfunction. Choose a constant $C$ such that $C \ge{\max_{\type}\bar{\eigenfct}(\phi)}/{\min_{\type}\bar{\eigenfct}(\type)}$ for all sufficiently small $\bar{\delta}$. This is possible since the eigenfunctions are continuous in $\bar{\delta}$ (this can be seen along the lines of Note 3 to Chapter II on pages 568-569 of \cite{Kat95}). Furthermore, by the continuity of the spectral radius with respect to the operator (see Chapter II.5 in \cite{Kat95}) and, since $\rho_{\eps}(A)>1$ by assumption, $\bar{\rho}>1$ for all small~$\bar{\delta}$.

\pagebreak[3]
\noindent
Hence, for all $v,w \in \vseteps_n$, $v \not=w$, 
\begin{align*}
\P\big(N_{\le h}^{\theta_n}(v,w) \ge 1\big)& \le \frac{f(1)}{\eps n} \sum_{k=0}^{h-1} \frac{1}{\min_{\type} \bar{\eigenfct}(\type)} E_{(s_n(v),\leftp)}\big[\sum_{\heap{x \in\IBP^{\underline{\eps}}((1+\bar{\delta})f)}{|x|=k}}\bar{\eigenfct}(\type)\big]=\frac{f(1)}{\eps n} \sum_{k=0}^{h-1}\bar{\rho}^k  \frac{\bar{\eigenfct}(s_n(v),\leftp)}{\min_{\type} \bar{\eigenfct}(\type)}\\
& \le  \frac{f(1) C}{\eps n} \frac{\bar{\rho}^h}{\bar{\rho}-1}=\frac{f(1) C}{\eps (\bar{\rho}-1)} \exp\big(h \log\bar{\rho}-\log n\big).
\end{align*}
In particular, for $\delta>0$ and $h_n :=(1-\delta^2)\frac{\log n}{\log \bar{\rho}}$, we showed that 
\[
\sup_{v,w \in \vseteps_n, v \not=w} \P(N_{\le h_n}^{\theta_n}(v,w) \ge 1)=o(1).
\]
For independent, uniformly chosen vertices $V_n,W_n$ in $\vseteps_n$, we have $V_n \not=W_n$ with high probability. According to \eqref{eq:dist1part}, this implies $\P(\dist_{\grapheps_n}(V_n,W_n) \le h_n)=o(1)$. Choosing $\bar{\delta}$ so small that $\log \bar{\rho} \le (1+\delta)\log \rho_{\eps}(A)$, it follows that, with high probability,
\[
\dist_{\grapheps_n}(V_n,W_n) \ge (1-\delta^2)\frac{\log n}{\log \bar{\rho}} \ge (1-\delta)\frac{\log n}{\log \rho_{\eps}(A)}.\qedhere
\]
\end{proof}

\section{Approximation by a branching process}\label{sec:coup}
In this section, we compare the connected components in the network to the multitype branching process described in Section~\ref{sec:BP_def}. We begin by coupling the local neighbourhood of a uniformly chosen vertex to the $\IBP$ in Sections \ref{sec:network_tree} and \ref{sec:tree_IBP}. This local consideration allows us to draw conclusions about the existence or nonexistence of the giant component from knowledge of the branching process, see Section~\ref{sec:size_giant}. For the analysis of the typical distances in the network, knowing the local neighbourhood is insufficient. We show in Section \ref{sec:domi} that a slightly larger $\IBP$ dominates the network globally in a suitable way.

\subsection{Coupling the network to a tree}\label{sec:network_tree}
The proof of the coupling follows the lines of~\cite{DerMoe13} for the undamaged network, but unfortunately
we cannot use their results directly as the coupling in  \cite{DerMoe13} makes extensive use of vertices which are removed 
in the damaged network. Note however that the removal of the old vertices significantly reduces the risk of circles in the 
local neighbourhood of a vertex and, therefore, the coupling here will be successful for much longer than the coupling 
in~\cite{DerMoe13}.\smallskip 

In the first step, we couple the neighbourhood of a vertex in $\grapheps_n$ to a labelled tree. We consider trees where every vertex $v$ is equipped with a `tag' in $\vseteps_n$ and a 
`mark' $\mar \in \vseteps_n \cup \{\leftp\}$. We will use the same notation for vertex and tag to emphasize the similarity between the tree and the network, where for a vertex in the network the time at which it was added serves as its tag. For $v_0 \in \vseteps_n$, let $\tree_n(v_0)$ be the random tree with root $v_0$ of label $(v_0,\leftp)$ constructed as follows: Every vertex $v$ produces independently offspring to the left, i.e., with tag $u \in \{\fst+1,\ldots,v-1\}$ with probability
\[
\P(v \text{ has a descendant with tag } u)=\P(\triangle \Z[u,v-1]=1).
\]
All offspring on the left are of mark $v$. Moreover, independently, $v$ produces descendants to its right (i.e.\ with tag at least $v+1$), which are all of mark $\leftp$. The distribution of the cumulative sum of the right descendants depends on the mark of $v$. When $v$ is of mark $\mar=\leftp$, then the cumulative sum is distributed according to the law of $(\Z[v,u]\colon v+1 \le u \le n)$. When $v$ is of mark $\mar= w \in \vseteps_n$, $w>v$, then the cumulative sum follows the same distribution as $(\Z[v,u]-\mathbbm{1}_{[w,\infty)}(u)\colon v+1\le u \le n)$ conditioned on $\triangle \Z[v,w-1]=1$. The percolated version $\tree_{n,p}(v_0)$ is obtained from $\tree_n(v_0)$ by deleting every particle in $\tree_n(v_0)$ together with its line of descent with probability $1-p$, independently for all particles. In particular, with probability $1-p$ the root $v_0$ is deleted and $\tree_{n,p}(v_0)$ is empty.

\pagebreak[3]

We write $\connect_{n,p}^{\eps}(v_0)$ for the connected component in $\grapheps_n(p)$ containing vertex $v_0$.

\begin{proposition}\label{pro:graph_tree}
Suppose $(c_n \colon n \in \N)$ is a sequence of positive integers with $\lim_{n \to \infty}c_n^2/n = 0$. 
Then there exists a coupling of a uniformly chosen vertex $V_n$ in $\vseteps_n$, graph $\grapheps_n(p)$ and tree $\tree_{n,p}(V_n)$ such that
\[
|\connect_{n,p}^{\eps}(V_n)|\land c_n =|\tree_{n,p}(V_n)|\land c_n \qquad \text{with high probability}.
\]
\end{proposition}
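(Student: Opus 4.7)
The plan is to construct the coupling via a simultaneous BFS exploration of $\connect_{n,p}^{\eps}(V_n)$ and $\tree_{n,p}(V_n)$, matching the two structures vertex by vertex and halting when either $c_n$ vertices have been discovered or a cycle (a ``collision'') appears; I then bound the collision probability by $O(c_n^2/n)=o(1)$ using the hypothesis.

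First, I would set up the joint exploration. Initially both structures consist of $V_n$, with tag $V_n$ and mark $\leftp$, retained with probability $p$. At each step I pick the next discovered-but-unexplored vertex $v$ and jointly reveal (i) all edges of $v$ in $\grapheps_n$ to yet-unexposed vertices, each independently retained with probability $p$, and (ii) the tree offspring of $v$ prescribed by the tree construction using the mark of $v$, each percolated independently with parameter $p$. Every newly revealed graph neighbor is assigned to be a fresh tree offspring of $v$ with matching tag; the mark of the new tree vertex is set to $v$ if it is a left-offspring and to $\leftp$ if it is a right-offspring.

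Second, I would verify that, conditionally on the coupling succeeding so far, the two conditional laws agree. The dynamic construction of the network implies that, given the history, edges from $v$ to older vertices $u \in \{\fst+1,\ldots,v-1\}$ are independent Bernoulli with probabilities $\P(\triangle \Z[u,v-1]=1)$, precisely the probabilities used in the tree construction for left-offspring. The right-offspring of $v$ are controlled by the indegree process $\Z[v,\cdot]$: if $v$ has mark $\leftp$ we use the unconditioned law, and if $v$ has mark $w>v$ we condition on $\triangle \Z[v,w-1]=1$ and remove the already exposed jump at time $w$. Lemma~\ref{coupFact} guarantees that these conditional laws are the ones encoded in the tree. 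Edge percolation with parameter $p$ is applied independently to each edge, so it commutes with both constructions.

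Third, I would bound the collision probability. A collision in the first $c_n$ steps requires some newly revealed graph-edge $\{v,v'\}$ to land on a previously discovered vertex. By Lemma~\ref{ZmeanEstim} applied with initial degree zero, for any pair $v<v'$ in $\vseteps_n$ we have $\P(\{v,v'\}\in\eseteps_n)=\P(\triangle\Z[v,v'-1]=1)\le f(0)/\bigl((v-1)^\gamma(v'-1)^{1-\gamma}\bigr)$, and since $v \ge \fst+1 \ge \eps n$ together with $v'\ge v$ gives $(v-1)^\gamma(v'-1)^{1-\gamma}\ge v-1 \ge \eps n - 1$, this is $O(1/n)$ uniformly. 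Union-bounding over the at most $\binom{c_n}{2}$ pairs of discovered vertices gives a collision probability of $O(c_n^2/n)=o(1)$. On the complementary event both explorations evolve identically, so $|\connect_{n,p}^{\eps}(V_n)| \land c_n =|\tree_{n,p}(V_n)| \land c_n$. The main obstacle will be the distribution-matching step: the asymmetry between left- and right-offspring (whose conditional laws depend sensitively on which edges have already been exposed and on the mark of the current vertex) requires careful bookkeeping, and the marks in the tree together with Lemma~\ref{coupFact} are precisely what is needed to close this loop.
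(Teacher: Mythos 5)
Your high-level strategy—run a BFS exploration, stop at $c_n$ discovered vertices or at a collision, and bound the failure probability by $O(c_n^2/n)$—is the right one and matches the paper's approach. But the distribution-matching step, which you flag as the "main obstacle," is where your argument has a genuine gap, and that is the part that actually carries the proof.

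The assertion "given the history, edges from $v$ to older vertices $u$ are independent Bernoulli with probabilities $\P(\triangle \Z[u,v-1]=1)$" is false. The probability $\P(\triangle\Z[u,v-1]=1)$ is the \emph{unconditional} marginal; once the exploration has checked whether $u$ is a left-neighbour of some previously explored vertex $w>u$ and found no edge, you have learned $\triangle\Z[u,w-1]=0$, which changes the conditional law of the indegree process $\Z[u,\cdot]$ and hence of $\triangle\Z[u,v-1]$. Similarly, the conditional law of right-neighbours of $v$ given the BFS history is the law of $\Z[v,\cdot]$ conditioned on the observed edge to $v$'s parent and on the absence of edges to certain dead vertices, not the unconditional law. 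These discrepancies are not small error terms that a collision union bound can absorb; they are exactly what prevents a naive simultaneous exploration from being a perfect coupling between failure events.

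The paper closes this gap with two ingredients you omit. First, the exploration always proceeds from the \emph{leftmost} active vertex. This ordering is not cosmetic: it forces the structural fact that when exploring $v$, any vertex $u<v$ (other than $v$'s parent) is still veiled, so the network has revealed nothing about $\Z[u,\cdot]$ except via a small, controllable list of "dead right-neighbour" constraints; and it pins down the conditioning for the right-offspring to at most one confirmed edge plus a set of excluded edges. Second, the proof isolates five explicit failure events (denoted Ia, Ib, IIr, II$\ell$a, II$\ell$b in the paper) that enumerate precisely the configurations in which the conditional laws in the tree and in the network cease to agree, and it uses Lemma~\ref{condEstF} to control the effect of conditioning on the absence of up to $c_n$ edges when estimating one of these. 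Conditional on none of these events occurring, the offspring distributions genuinely coincide and the coupling is perfect; each event has probability $O(c_n/n)$ per step, giving the $O(c_n^2/n)$ bound. Your Lemma~\ref{coupFact} reference is relevant to the right-offspring laws, but by itself it does not address the conditioning coming from absent edges, nor does it replace the role of the exploration order.
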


To prove Proposition~\ref{pro:graph_tree}, we define an exploration process which we then use to inductively collect information about the tree and the network on the same probability space. We show that the two discovered graphs agree until a stopping time, which is with high probability larger than $c_n$. After that time, the undiscovered part of the tree and the network can be generated independently of each other.

%Let us first fix a vertex $v_0 \in \grapheps_n$. In both graphs, $\grapheps_n(p)$ and $\tree_{n,p}(v_0)$, $v_0$ is removed by percolation with the same probability. If $v_0$ is not removed, then we  distinguish three categories of vertices.

We begin by specifying the exploration process that is used to explore the connected component of a vertex $v_0$ in a labelled graph $\graph$, like $\connect_{n,p}^{\eps}(v_0)$ or $\tree_{n,p}(v_0)$. We distinguish three categories of vertices:

\begin{itemize}
\item \emph{veiled vertices:} vertices for which we have not yet found a connection to the cluster of $v_0$
\item \emph{active vertices:} vertices for which we already know that they belong to the cluster of $v_0$ but for which we have not yet explored all its immediate neighbours
\item \emph{dead vertices:} vertices which belong to the cluster of $v_0$ and for which all immediate neighbours have been explored
\end{itemize}

%Since $v_0$ is removed in both graphs with the same probability, the case that $v_0$ is removed can be perfectly coupled. If $v_0$ is not removed, then we run the following exploration process. 
At the beginning of the exploration only $v_0$ is active and all other vertices are veiled. In the first exploration step we explore all immediate neighbours of $v_0$, declare $v_0$ as dead and all its immediate neighbours as active. The other vertices remain veiled. We now continue from the active vertex $v$ with the smallest tag and explore all its immediate neighbours apart from $v_0$ from where we just came. The exploration is continued until there are no active vertices left.

We couple the exploration processes of the network and the tree started with $v_0 \in \vseteps_n$ up to a stopping time $\stoptime$, such that up to time $\stoptime$ both explored subgraphs (without the marks) coincide. In particular, the explored part of $\connect_{n,p}^{\eps}(v_0)$ is a tree and every tag has been used at most once by the active or dead vertices in $\tree_{n,p}(v_0)$. The event that at least one of these properties fails is called (E). We also stop the exploration, when either the number of dead and active vertices exceeds $c_n$ or when there are no active vertices left. In this case we say that the \emph{coupling is successful}. If we have to stop as a consequence of (E), we say that the \emph{coupling fails}.

\begin{lemma}\label{coupling_succ1}
Suppose that $p \in (0,1]$ and $(c_n \colon n \in \N)$ satisfies $\lim_{n \to \infty} c_n^2/n= 0$. Then
\[
\lim_{n \to \infty} \sup_{v_0 \in \vseteps_n} \P\big(\text{the coupling of }\connect_{n,p}^{\eps}(v_0) \text{ and } \tree_{n,p}(v_0) \text{ fails}\big)=0.
\]
\end{lemma}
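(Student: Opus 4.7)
The plan is to realise an exploration-based coupling of $(\grapheps_n(p), \tree_{n,p}(v_0))$ on a single probability space and to bound the failure probability by a union bound of order $c_n^2/(\eps n)$, which tends to zero under the hypothesis $c_n^2/n\to 0$.

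I would run the exploration process simultaneously in graph and tree, driven by shared randomness: an independent Bernoulli$(p)$ random variable for each vertex of $\vseteps_n$ (governing percolation on both sides), together with edge/offspring indicators generated on demand. At each step, we select the active vertex $v$ with smallest tag and reveal its connections. For every $u\in\vseteps_n$ with $u<v$ whose connection to $v$ has not been determined previously, we sample a single Bernoulli with success probability $\P(\triangle\Z[u,v-1]=1)$; a success places the edge $\{u,v\}$ in the graph and declares $u$ a left-descendant of $v$ in the tree. For right-neighbours of $v$, we generate one realisation of $(\Z[v,k])_{k>v}$ from the conditional law imposed by the information revealed about $\Z[v,\cdot]$ so far, and use its jump times simultaneously as right-edges in the graph and right-offspring in the tree. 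A vertex is traversed only if its retention Bernoulli is $1$. Under this joint construction, the explored components of $\grapheps_n(p)$ and $\tree_{n,p}(v_0)$ coincide (as unmarked, unlabelled graphs) for as long as every newly discovered tag is fresh, i.e.\ not already in the set $\mathcal{V}$ of previously discovered tags.

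For the union bound, failure at a given step occurs iff a newly sampled neighbour's tag lies in $\mathcal{V}\setminus\{\mathrm{parent}(v)\}$, where $\mathrm{parent}(v)$ is the vertex whose exploration first revealed $v$. The conditional probability that a specific $u'\in\mathcal{V}\setminus\{\mathrm{parent}(v)\}$ is rediscovered by $v$ is bounded, when $u'<v$, by Lemma~\ref{ZmeanEstim}:
\[
\P\big(\triangle\Z[u',v-1]=1\big)\le \frac{\beta}{(u'-1)^{\gamma}(v-1)^{1-\gamma}}\le\frac{\beta}{\eps n-1},
\]
since $u'-1$ and $v-1$ are both at least $\lfloor\eps n\rfloor$. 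For $u'>v$ the corresponding probability is a conditional version of $\P(\triangle\Z[v,u'-1]=1)$, and Lemma~\ref{coupFact} dominates it by $\P^1(\triangle\Z[v,u'-1]=1)\le(\gamma+\beta)/(\eps n-1)$. Because $|\mathcal{V}|\le c_n$ at every step and the exploration stops after at most $c_n$ steps, summing the per-pair bound $O(1/(\eps n))$ over all step–tag pairs yields
\[
\P\big(\text{coupling fails}\big)\le\frac{Cc_n^2}{\eps n}\longrightarrow 0,
\]
uniformly in $v_0\in\vseteps_n$.

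The principal obstacle is the control of the accumulated conditioning in the union-bound step. Prior to exploring $v$, the joint law has been conditioned on many previously revealed Bernoullis. Conditionings on edge-absences can only decrease subsequent edge probabilities, hence are harmless for upper bounds. The only edge-presence conditioning involving $\Z[v,\cdot]$ that is available before $v$ is explored is the one recording that $\{v,\mathrm{parent}(v)\}\in\eseteps_n$ when $\mathrm{parent}(v)>v$; via Lemma~\ref{coupFact} this domesticates into a stochastic domination by the unconditioned process starting from initial value $1$, inflating single-edge probabilities by the constant factor $f(1)/f(0)=(\gamma+\beta)/\beta$. All other accumulated conditionings concern distinct indegree processes $\Z[u,\cdot]$ and do not interfere with the estimate for $\Z[v,\cdot]$, so the overall bound $O(c_n^2/(\eps n))$ survives and yields the claim.
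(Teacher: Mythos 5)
Your exploration scheme and the union bound of order $c_n^2/(\eps n)$ are the right ingredients, and your invocation of Lemma~\ref{ZmeanEstim} and Lemma~\ref{coupFact} is exactly what the paper uses. However, there is a genuine gap in the construction of the coupling itself.

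You sample the left edge $\{u,v\}$ of the graph from the \emph{unconditioned} Bernoulli with parameter $\P(\triangle\Z[u,v-1]=1)$. But by the time $v$ is explored, the exploration has already conditioned the law of $\Z[u,\cdot]$: every dead $d$ with $d>u$ that checked $u$ as a potential left neighbour and failed contributes the conditioning $\triangle\Z[u,d-1]=0$. The correct conditional probability on the graph side is therefore strictly smaller, and a graph sampled from the unconditioned marginals does \emph{not} have the law of $\grapheps_n(p)$. You acknowledge the accumulated conditioning in your last paragraph, but the claim that ``conditionings on edge-absences only decrease probabilities, hence are harmless for upper bounds'' only rescues the union-bound estimate; it does not rescue the fact that your left-neighbour samples are not distributed as the conditional graph edges, so the object you build on the graph side is not a valid copy of the percolated damaged network. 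A parallel issue affects the right side: you draw one realisation of $(\Z[v,k])_{k>v}$ ``from the conditional law imposed by the information revealed about $\Z[v,\cdot]$,'' but the tree's right-offspring distribution is conditioned only on the mark, not on past absences, so using the same realisation for both would not give the tree the law of $\tree_{n,p}(v_0)$.

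The paper resolves exactly this tension via its event Ib: the moment a left descendant $u$ of $v$ is found in the tree, one \emph{peeks ahead} at $u$'s right descendants and checks whether any of them coincides with a dead tag; the coupling is aborted on that event. On the complement of Ib, the tree's left-offspring distribution, conditioned on no dead right descendants, agrees with the graph's conditioned distribution of veiled left neighbours of the leftmost active vertex, so the two can be perfectly coupled. This anticipatory check is what reconciles the unconditioned tree law with the conditioned graph law, and it is precisely the failure mode that does not appear in your catalogue, where the only declared failure is ``a newly sampled neighbour's tag lies in $\mathcal{V}\setminus\{\mathrm{parent}(v)\}$.'' Without Ib, the proof does not establish that the explored subgraphs coincide in law.

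Once the construction is repaired (use conditional laws throughout, add the Ib-type check, and for the right step on marked vertices invoke Lemma~\ref{condEstF} to control the additional conditioning), the per-step failure probability is indeed $O(c_n/(\eps n))$ over $c_n$ steps, and your $c_n^2/n\to 0$ conclusion follows, as in the paper.
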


In the proof of Lemma~\ref{coupling_succ1} we make use of the following result.

\begin{lemma}[Lemma~2.12 in \cite{DerMoe13}]\label{condEstF} 
Let $(c_n \colon n \in \N)$ satisfy $\lim_{n \to \infty}c_n/n= 0$. Then there exists a constant $C_{\ref{condEstF}}>0$ such that for all sufficiently large $n$, for all disjoint sets $\mathcal{I}_0, \mathcal{I}_1 \subseteq \vseteps_n$ with $|\mathcal{I}_0|\le c_n$ and $|\mathcal{I}_1|\le 1$, and for all $u,v \in \vseteps_n$
\begin{align*}
\P\big(\triangle \Z[v,u]=1\,\big|\,\triangle \Z[v,i]=1 \mbox{ for }  i \in \mathcal{I}_1,& \triangle \Z[v,i]=0 \mbox{ for }  i \in \mathcal{I}_0\big)\\
& \le C_{\ref{condEstF}} \P\big(\triangle \Z[v,u]=1\,\big|\,\triangle \Z[v,i]=1 \mbox{ for }  i \in \mathcal{I}_1\big).
\end{align*}
\end{lemma}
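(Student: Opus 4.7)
\textbf{Proof plan for Lemma~\ref{condEstF}.} The key observation is that conditioning on non-events $\{\triangle \Z[v,i]=0\}$ only \emph{suppresses} the indegree of $v$; hence such extra conditioning cannot inflate the probability of a further jump at time $u$, and in fact the bound should hold with $C_{\ref{condEstF}}=1$. To formalise this, write $A=\{\triangle \Z[v,u]=1\}$, $B=\bigcap_{i\in\mathcal{I}_1}\{\triangle\Z[v,i]=1\}$, and $C=\bigcap_{i\in\mathcal{I}_0}\{\triangle\Z[v,i]=0\}$. By Bayes' rule,
\[
\P(A\mid B\cap C)=\P(A\mid B)\,\frac{\P(C\mid A\cap B)}{\P(C\mid B)},
\]
so it suffices to bound the likelihood ratio $R:=\P(C\mid A\cap B)/\P(C\mid B)$ by an absolute constant.

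First I would use Lemma~\ref{coupFact} once to absorb the (at most single-element) conditioning $B$: under $\P(\cdot\mid B)$ the process $(\Z[v,n]\colon n\ge v)$ is coupled with a chain started from indegree $|\mathcal{I}_1|$ rather than $0$, whose jump set is a superset apart from the at most one time in $\mathcal{I}_1$. After this reduction $B$ no longer figures in the conditioning and the indegree chain retains the Markov property. Splitting $\mathcal{I}_0=\mathcal{I}_0^<\cup\mathcal{I}_0^>$ according to whether $i<u$ or $i>u$, and writing $C=C^<\cap C^>$ accordingly, I would decompose
\[
R=\frac{\P(C^<\mid A\cap B)}{\P(C^<\mid B)}\cdot\frac{\P(C^>\mid A\cap B\cap C^<)}{\P(C^>\mid B\cap C^<)}
\]
and bound each factor separately.

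For the second factor I would use that, conditional on the indegree trajectory up to time $u-1$, the value of $\Z[v,u]$ under the numerator is exactly one larger than under the denominator, since $A$ forces an extra jump at time $u$. The event $C^>$ factorises, conditional on a trajectory, as a product of non-jump Bernoullis with parameters $1-f(\Z[v,i-1])/i$, and this product is non-increasing in the trajectory values; stochastic monotonicity therefore yields that the second factor is at most~$1$. For the first factor, apply Bayes' rule again to rewrite it as $\P(A\mid B\cap C^<)/\P(A\mid B)$. Conditioning additionally on $C^<$ reweights the law of $\Z[v,u-1]$ by factors $1-f(k)/i$, which are decreasing in $k$, so $\Z[v,u-1]$ is stochastically smaller under $B\cap C^<$; since $\P(A\mid\mathcal{F}_{u-1})=f(\Z[v,u-1])/u$ and $f$ is non-decreasing, the ratio is again at most~$1$.

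The main obstacle is the stochastic monotonicity of the conditioned indegree chain: it must be proved from the monotonicity of $f$ by a coupling that replays the non-jumps on the same underlying Bernoulli sequence, so that the unconditioned chain dominates the conditioned one pathwise. A smaller technical point is the placement of the single time in $\mathcal{I}_1$, which may lie on either side of $u$; this is handled uniformly by the initial reduction via Lemma~\ref{coupFact}, after which the argument above is insensitive to the location of $j^\ast$. The resulting constant can be taken to equal~$1$, which is certainly admissible as $C_{\ref{condEstF}}$.
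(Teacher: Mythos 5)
Your proposal takes a genuinely different route from the paper and aims for a strictly sharper conclusion ($C_{\ref{condEstF}}=1$), but it has a gap at exactly the point you flag as the ``main obstacle''. The paper's own argument is deliberately crude: it bounds
\[
\P(A\mid B\cap C) \le \frac{\P(A\mid B)}{\P(C\mid B)},
\]
reducing everything to a \emph{lower} bound on $\P(C\mid B)$. That lower bound is obtained by Lemma~\ref{coupFact} (pathwise domination of the conditioned chain by $\P^1$), the Harris inequality for the positively associated non-jump events under $\P^1$, and then \eqref{triangle_bound}, giving $\P(C\mid B)\ge (1-f(1)/(\eps n))^{c_n}\to 1$. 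The hypothesis $c_n/n\to 0$ is used here to make this one-sided estimate suffice.

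Your Bayes decomposition $\P(A\mid B\cap C)=\P(A\mid B)\cdot \P(C\mid A\cap B)/\P(C\mid B)$ instead asks for $\P(C\mid A\cap B)\le \P(C\mid B)$, a \emph{conditional} negative-association statement between a jump event $A$ and non-jump events $C$ under the measure $\P(\cdot\mid B)$. Two things go wrong. First, the ``absorption'' of $B$ via Lemma~\ref{coupFact} is not a distributional reduction: that coupling provides a one-sided domination of the $\P(\cdot\mid B)$-process by $\P^1$, so it can be used to bound $\P(C\mid B)$ from below (as the paper does), but it cannot be used to simultaneously replace $\P(\cdot\mid B)$ by $\P^1$ in both the numerator and the denominator of your ratio $R$; doing so changes $R$ in an uncontrolled direction. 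Second, even setting $B$ aside, the required monotonicity is delicate: in the realization of the indegree chain by i.i.d.\ uniforms, the jump event $A$ is decreasing and $C$ is increasing, so Harris gives $\P(A\cap C)\le\P(A)\P(C)$ unconditionally, but conditioning on the (decreasing) event $B$ does not preserve the product structure on which Harris relies, and the transition kernel of the chain conditioned on a future jump at $j^\ast$ involves ratios of backward-Kolmogorov terms whose monotonicity in the current state is not immediate. You correctly identify this as the crux, but it is a substantive missing ingredient, not a technicality; the paper's route avoids it entirely by never needing an association inequality for the conditioned process.
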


\begin{proof} We have
\begin{align*}
\P\big(\triangle \Z[v,u]=1\,\big| \,\triangle \Z[v,i]=1 \mbox{ for }  i \in \mathcal{I}_1,& \triangle \Z[v,i]=0 \mbox{ for }  i \in \mathcal{I}_0\big)\\
&\le \frac{\P(\triangle \Z[v,u]=1\,|\,\triangle \Z[v,i]=1 \mbox{ for }  i \in \mathcal{I}_1)}{\P(\triangle \Z[v,i]=0 \mbox{ for }  i \in \mathcal{I}_0\,|\,\triangle \Z[v,i]=1 \mbox{ for }  i \in \mathcal{I}_1)}.
\end{align*}
With $n$ so large that $\fst \ge \threshold$, Lemma~\ref{coupFact} and (\ref{triangle_bound}) imply that
\begin{align*}
\P(\triangle \Z[v,i]=0 \mbox{ for }  i \in \mathcal{I}_0\,|\,&\triangle \Z[v,i]=1 \mbox{ for }  i \in \mathcal{I}_1)\ge \P^1(\triangle \Z[v,i]=0 \mbox{ for }  i \in \mathcal{I}_0)\\
&\ge \prod_{i \in \mathcal{I}_0} \P^1(\triangle \Z[v,i]=0)\ge \prod_{i \in \mathcal{I}_0} \Big(1-\frac{f(1)}{(v-1)^{\gamma}i^{1-\gamma}}\Big) \ge \Big(1-\frac{f(1)}{\eps n}\Big)^{c_n}.
\end{align*}
Since $c_n/n$ tends to zero as $n \to \infty$, the right-hand side converges to one.
\end{proof}

\begin{proof}[Proof of Lemma~\ref{coupling_succ1}] We assume that $n$ is so large that $\fst\ge \threshold$.
To distinguish the exploration processes, we use the term \emph{descendant} for a child in the labelled tree and 
the term \emph{neighbour} in the context of $\grapheps_n(p)$. The $\sigma$-algebra generated by 
the exploration until the completion of step $k$ is denoted~$\F_k$. \smallskip

Since the probability of removing $v_0$ is the same in $\connect_{n,p}^{\eps}(v_0)$ and $\tree_{n,p}(v_0)$, this event can be perfectly coupled. 
If $v_0$ is not removed, then we explore the immediate neighbours of $v_0$ in $\grapheps_n(p)$ and the children of the root $v_0$ in the tree. Again these families 
are identically distributed and can be perfectly coupled. \smallskip

Now suppose that we successfully completed exploration step $k$ and are about to start the next step from 
vertex $v$. At this stage every vertex in the tree can be uniquely referred to by its tag and the subgraphs coincide. Denoting by $\mathfrak{a}$ and $\mathfrak{d}$ 
the set of active and dead vertices, respectively, we have $\mathfrak{a}\not=\emptyset$ and $|\mathfrak{a}\cup\mathfrak{d}|<c_n$.  
We continue by exploring the left descendants and neighbours of $v$. Since we always explore the leftmost active vertex, we cannot encounter a dead or active neighbour 
in this step. However, in the tree $\tree_{n,p}(v_0)$ we may find a dead left descendant (i.e.\ an offspring whose tag agrees with the tag of a dead particle); 
we call this event Ia. On Ia, the vertices in the explored part of $\tree_{n,p}(v_0)$ are no longer uniquely identifiable by their tag and we stop. We have 
\[
\P\big(\text{Ia}\,|\,\F_k\big)=\P\big(\exists d \in \mathfrak{d}\colon \text{$d$ is a left descendant of $v$}\,|\,\F_k\big) 
\le \sum_{d \in \mathfrak{d}} \P(\triangle \Z[d,v-1]=1)\le c_n \frac{f(0)}{\eps n}.
\]
In the first inequality, we used subadditivity, the definition of $\tree_{n,p}(v_0)$ and omitted the event that offspring of $v$ are removed by percolation.
Hence, $\P(\text{Ia})=O(c_n/n)$. In the exploration to the left in the tree, we immediately check if a found left descendant has a right descendant which is dead. We denote this event by Ib and stop the exploration as soon as it occurs. The reason is that in the network this event could not happen since we always explore the leftmost active vertex. Therefore, the distribution of left neighbours agrees with the distribution of the left descendants conditioned on having no dead right descendants and we can couple both explorations such that they agree in this case. To estimate the probability of the adverse event Ib, we use the definition of $\tree_{n,p}(v)$ to obtain
\begin{align*}
\P\big(\text{Ib}\,|\,\F_k\big)&\le\P\big(\exists u \in \mathfrak{d}^c, d \in \mathfrak{d}\colon \text{$u$ is a left descendant of $v$, $d$ is a right descendant of $u$}\,|\,\F_k\big)\\
&\le \sum_{u \in \mathfrak{d}^c}\sum_{d \in \mathfrak{d}} \P(\triangle\Z[u,v-1]=1)\P(\triangle \Z[u,d-1]=1\,|\,\triangle\Z[u,v-1]=1).
\end{align*}
By definition of the exploration process, there are at most $c_n$ dead vertices. Therefore, Lemma~\ref{coupFact} and (\ref{triangle_bound}) yield
\[
\P\big(\text{Ib}|\F_k\big) \le c_n \sum_{u \in \mathfrak{d}^c, u\le v-1} \frac{f(0)}{(u-1)^{\gamma}(v-1)^{1-\gamma}} \frac{f(1)}{\eps n} \le c_n\frac{f(0) f(1)}{\eps n} \frac{1}{(v-1)^{1-\gamma}} \sum_{u=1}^{v-1} u^{-\gamma},
\]
which implies in particular that $\P(\text{Ib})=O(c_n/n)$. \smallskip

We turn to the exploration of right descendants, resp.\ neighbours. When vertex $v$ is of mark $\mar\not= \leftp$, then we already know that $v$ has no right descendants, resp.\ neighbours, in $\mathfrak{d}$ since we checked this when $v$ was discovered. 
We denote the event that a right descendant, resp.\ neighbour, is active by IIr and stop the exploration as soon as this event occurs because the tags in $\tree_{n,p}(v_0)$ are no longer unique, 
resp.\ we found a circle in $\connect_{n,p}^{\eps}(v_0)$. According to Lemma~\ref{condEstF} and (\ref{triangle_bound}),
\begin{align*}
\P\big(\text{IIr}\,\big|\,\F_k\big)&\le\P\big(\exists a \in \mathfrak{a}\colon \triangle \Z[v,a-1]=1\,\big|\, \triangle \Z[v,\mar-1]=1,\triangle\Z[v,d-1]=0\, \mbox{ for }  \,d \in \mathfrak{d}\setminus\{\mar\}, \F_k\big)\\
&\le C_{\ref{condEstF}} \sum_{a \in \mathfrak{a}} \P^1(\triangle \Z[v,a-1]=1)\le C_{\ref{condEstF}} c_n \frac{f(1)}{\eps n}.
\end{align*}
Thus, $\P(\text{IIr})=O(c_n/n)$. Conditional on the event that there are no active vertices in the set of right descendants, resp.\ neighbours, the offspring distribution in tree and network 
agree and can therefore be perfectly coupled. When the vertex $v$ is of mark $\mar=\leftp$, then we have not gained any information about its right descendants, yet. The event that there is a dead or active vertex in the right descendants is denoted by II$\ell$a. We stop when this event occurs and use (\ref{triangle_bound}) to estimate
\[
\P\big(\text{II$\ell$a}|\F_k\big)=\P\big(\exists a \in \mathfrak{a}\cup \mathfrak{d}\colon \text{$a$ is a right descendant of $v$}\big|\F_k\big)\le \sum_{a \in \mathfrak{a}\cup \mathfrak{d}} \P(\triangle \Z[v,a-1]=1)\le c_n \frac{f(0)}{\eps n}.
\]
Thus, $\P(\text{II$\ell$a})=O(c_n/n)$. In $\connect_{n,p}^{\eps}(v_0)$, we know that $v$ has no dead right neighbours as this would have stopped the exploration in the moment when $v$ became active. The event that there are active vertices in the set of right neighbours is denoted by II$\ell$b and we stop as soon as it occurs since a cycle is created. Using again (\ref{triangle_bound}), we find
\begin{align*}
\P\big(\text{II$\ell$b}|\F_k\big)&=\P\big(\exists a \in \mathfrak{a}\colon \triangle \Z[v,a-1]=1\big|\triangle \Z[v,d-1]=0 \text{ for }d \in \mathfrak{d},\F_k\big)\\
&\le \sum_{a \in \mathfrak{a}} \P(\triangle \Z[v,a-1]=1)\le c_n \frac{f(0)}{\eps n}.
\end{align*}
As in case $\mar\not=\ell$, the explorations can be perfectly coupled when the adverse events do not occur. We showed that in every step the coupling fails with a probability bounded by $O(c_n/n)$. As there are at most $c_n$ exploration steps until we end the coupling successfully, the probability of failure is $O(c_n^2/n)=o(1)$. In other words, the coupling succeeds with high probability. 
\end{proof}

\begin{proof}[Proof of Proposition~\ref{pro:graph_tree}]
First, consider the statement for a fixed vertex $v_0$. When the coupling is successful and ends because at least $c_n$ vertices were explored, then $|\connect_{n,p}^{\eps}(v_0)|\ge c_n$ and $|\tree_{n,p}(v_0)|\ge c_n$. If the coupling is successful and ends because there are no active vertices left, then $|\connect_{n,p}^{\eps}(v_0)|=|\tree_{n,p}(v_0)|$ since the subgraphs coincide. Since the coupling is successful with high probability by Lemma~\ref{coupling_succ1}, $|\connect_{n,p}^{\eps}(v_0)|\land c_n=|\tree_{n,p}(v_0)|\land c_n$ with high probability. As Lemma~\ref{coupling_succ1} shows the success of the coupling uniformly in the start vertex, the randomization of the vertex $v_0$ to a uniformly chosen vertex $V_n \in \vseteps_n$ is now straightforward.
\end{proof}

\subsection{Coupling the tree to the $\IBP$}\label{sec:tree_IBP}

Coupling the neighbourhood of a vertex to a labelled tree provides a great simplification of the problem since many dependencies are eliminated. However, the offspring distribution in the tree $\tree_{n,p}(V_n)$ is still complicated and depends on~$n$. Since we are mainly interested in the asymptotic size of the giant component, we now couple the tree to the $\IBP$, which does not 
depend on~$n$ and is much easier to analyse. We denote by $|\mathcal{X}^{\eps}(p)|$ the total progeny of the $\IBP$. Recall the definition of $S^{\eps}$ from \eqref{def:Seps}.

\begin{proposition}\label{pro:graph_IBP}
Let $p \in (0,1]$ and $(c_n \colon n \in \N)$ be a sequence of positive integers with $\lim_{n \to \infty}c_n^3/n = 0$.
Then there exists a coupling of a uniformly chosen vertex $V_n$ in $\vseteps_n$, the graph $\grapheps_{n}(p)$ and the percolated $\IBP$ started with a particle of mark $\leftp$ and 
location~$S^{\eps}$ such that, with high probability,
\[
|\connect_{n,p}^{\eps}(V_n)|\land c_n =|\mathcal{X}^{\eps}(p)|\land c_n.
\]
\end{proposition}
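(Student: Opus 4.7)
The plan is to compose the conclusion of Proposition~\ref{pro:graph_tree} with a second coupling, between the labelled tree $\tree_{n,p}(V_n)$ and the percolated $\IBP$ started from one particle of mark $\leftp$ and location $S^{\eps}$. The natural bridge is the time change $s_n(u):=-\sum_{k=u}^{n-1}1/k \in [\log\eps,0]$, which sends a tag to its location: since $V_n$ is uniform on $\vseteps_n$, the density $1/(1-\eps)$ of $V_n/n$ on $[\eps,1]$ pushes forward under $y\mapsto \log y$ to the density $e^t/(1-\eps)$ of $S^{\eps}$ on $[\log\eps,0]$, so $s_n(V_n)$ converges weakly to $S^{\eps}$ (and this one-dimensional step can itself be coupled with error $o(1)$ by a quantile coupling). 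On a common probability space I would then run the breadth-first exploration of $\tree_{n,p}(V_n)$ jointly with that of the $\IBP$ for at most $c_n$ steps, matching the root, the left-offspring point process, the right-offspring cumulative process, and the Bernoulli$(p)$ percolation marks at each step.

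One step of the coupling proceeds as follows. A tree vertex of tag $v$ produces a left descendant with tag $u\in\{\fst+1,\ldots,v-1\}$ independently with probability $\P(\triangle\Z[u,v-1]=1)$; by the martingale identity used to prove Lemma~\ref{ZmeanEstim} this equals $\beta(v-1)^{-1}\prod_{i=u}^{v-2}(1+\gamma/i)\sim \beta v^{-1}(v/u)^{\gamma}$. Under the push-forward $u\mapsto t=s_n(u)-s_n(v)$, the resulting point process on $[\log\eps-s_n(v),0]$ has intensity converging to $\beta e^{(1-\gamma)t}\,dt$, which is precisely the intensity~\eqref{eq:intensityPi} of the left-offspring Poisson process $\Pi$ of the $\IBP$ at location $s_n(v)$; a standard Bernoulli-to-Poisson coupling controls the total-variation distance by $O(\sum_u u^{-2})=O(1/(\eps n))$ per vertex. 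For the right offspring, the cumulative indegree process $(\Z[v,u]\colon u\ge v)$ (under $\P$ for a mark-$\leftp$ parent and under $\P(\cdot\mid\triangle\Z[v,w-1]=1)$ for a mark-$w$ parent) is coupled via Lemma~\ref{lem:timeCoup}, together with Lemma~\ref{coupFact} in the conditioned case, to the pure-jump processes $Z$, resp.\ $\hat{Z}$, re-parametrised by artificial time on $[0,-s_n(v)]$. As long as the right indegree stays below $c_n$, Lemma~\ref{lem:timeCoup} forces the corresponding jump times to agree up to an error of order $f(c_n)/n=O(c_n/n)$ per offspring.

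The main obstacle is bookkeeping the total error over the whole exploration. Each of the at most $c_n$ explored particles contributes at most $c_n$ offspring, each with a coupling mismatch of order $c_n/n$ from the right-offspring bound above (the left-offspring bound is strictly smaller). A union bound across $O(c_n)$ exploration steps, $O(c_n)$ offspring per step, and $O(c_n/n)$ error per offspring gives a total failure probability of order $c_n^3/n=o(1)$ by hypothesis. The independent Bernoulli$(p)$ percolation marks are perfectly coupled on both sides. On the complement of the failure event the first $c_n$ particles of $\tree_{n,p}(V_n)$ and of $\mathcal{X}^{\eps}(p)$ coincide as percolated labelled trees, so their total progenies truncated at $c_n$ agree; combining this with Proposition~\ref{pro:graph_tree} yields the claimed identity.
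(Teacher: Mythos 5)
Your high-level architecture matches the paper: compose the tree coupling of Proposition~\ref{pro:graph_tree} with a second coupling between $\tree_{n,p}(V_n)$ and the percolated $\IBP$, handled bin-by-bin via the time change $s_n(\cdot)$, couple the root tags with $o(1)$ error, couple left offspring by a Bernoulli--Poisson approximation, and accumulate an $O(c_n^3/n)$ failure probability. This is the right outline. However, the crucial step for the right offspring is not correctly handled, and this is where the paper's proof relies on a different tool than you propose.

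You invoke Lemma~\ref{lem:timeCoup} to conclude that "the corresponding jump times agree up to an error of order $f(c_n)/n$ per offspring" and from this infer a per-offspring disagreement probability of order $c_n/n$. Lemma~\ref{lem:timeCoup} gives only almost-sure deterministic bounds on the holding-time discrepancies $|T_m[i]-T[i]|$, and the relevant quantity is the \emph{cumulative} discrepancy $|\sum_{i<j}T_m[i]-\sum_{i<j}T[i]|$, which, summing $\eta f(i)\triangle\tau_{m,i}\le \eta f(c_n)/(\eps n)$ over $j\le c_n$ states, is of order $c_n^2/n$. This deterministic window is far larger than the grid-cell width of order $1/n$ in artificial time, so containment of both cumulative jump times in the same grid cell is not automatic, and it is not clear how to extract from this a.s. bound a disagreement probability of the claimed order without further information on the \emph{joint law} of $(T_m[i],T[i])$. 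In short, a pathwise time-discrepancy bound does not, by itself, yield a total-variation-style control of the projected offspring process, and the step "per offspring error $c_n/n\Rightarrow$ failure probability $c_n^3/n$ in total" has a genuine gap.

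The paper resolves exactly this by not using Lemma~\ref{lem:timeCoup} for this purpose. Instead it imports two dedicated process-coupling results, stated as Lemmas~\ref{uncondCoup} and~\ref{condCoup} (Lemmas~6.2 and~6.6 of~\cite{DerMoe13}), which couple the continuous-time process $(Z_{s_n(u)-\loc})_u$ (respectively $(\hat Z_{s_n(u)-\loc}-1)_u$, conditioned appropriately) with the discrete cumulative right-offspring process of a tree vertex of tag $v$ so that the two processes disagree before reaching level $H$ with probability at most $C f(H)^2/(v-1)$. Taking $H=c_n$ and $v\ge\eps n$ gives the error $O(c_n^2/n)$ \emph{per explored particle}, and summing over at most $c_n$ explored particles yields $O(c_n^3/n)$. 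If you want to avoid citing those lemmas of~\cite{DerMoe13}, you would need to reprove a statement of that type; Lemma~\ref{lem:timeCoup} and Lemma~\ref{coupFact} alone do not supply it.
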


\begin{proof}[Proof of Proposition~\ref{pro:graph_IBP}] Throughout the proof, suppose that $n$ is so large that $\fst \ge \threshold$.
Instead of coupling the $\IBP$ directly to the network, we couple a projected version of the $\IBP$ to the tree $\tree_{n,p}(V_n)$. As long as the number of particles is preserved under the projection, this is sufficient according to Proposition~\ref{pro:graph_tree}. To describe the projection, we define $\pi_n^{\eps}\colon [\log \eps,0] \to \vseteps_n$ by
\begin{equation}\label{eq:projDef}
\pi_n^{\eps}(\loc)= v \qquad \Leftrightarrow \qquad s_n(v-1) < \loc \le s_n(v),
\end{equation}
where $s_n(v)=-\sum_{k=v}^{n-1} \sfrac{1}{k}$. Since $s_n(\fst) < \log(\fst/n) \le \log \eps$, every location in $[\log \eps,0]$ can be uniquely identified with a tag in $\vseteps_n$ by the map $\pi_n^{\eps}$. The projected $\IBP$ is again a labelled tree: The genealogical tree of the $\IBP$ with its marks is preserved, the location of a particle $x$ is replaced by the
tag $\pi_n^{\eps}(\loc(x))$. 
If $s_n(\fst+1) <\log \eps$, then no particles of the $\IBP$ are projected onto $\fst+1$. 
Moreover, while for $v \ge \fst+3$ an interval of length $1/(v-1)$ is projected onto $v$, for $\fst+2$ only an interval of length at most $s_n(\fst+2)- \log \eps$ is used. This length is positive but may be smaller than $1/(\fst+1)$. As a consequence, the projected $\IBP$ can have unusually few particles at $\fst+1$ and $\fst+2$ and we treat these two tags separately.
\smallskip
 
The exploration of the two trees follows the same procedure as the exploration described in Section~\ref{sec:network_tree} and we declare the coupling successful and stop as soon as either there are no active vertices left or the number of active and dead vertices exceeds $c_n$. Since both objects are trees, as long as the labels for the starting vertices agree, any failure of the coupling comes from a failure in the coupling of the offspring distributions. For simplicity, we consider only the case $p=1$. The generalization to $p \in (0,1]$ is straightforward. 
\pagebreak[3]

We first show that the labels of the starting vertices can be coupled with high probability.  To this end, note that the distribution of $S^{\eps }$ is chosen such that $\exp(S^{ \eps})$ 
is uniformly distributed on $[\eps,1]$. Since $\log \eps \le s_n(\fst+2) \le s_n(v-1) \le s_n(v) \le 0$,  for $v \ge \fst+3$, we obtain
\[
\P\big(\pi_n^{\eps}(S^{\eps})=v\big)=\P\big(e^{s_n(v-1)}<e^{S^{\eps}}\le e^{s_n(v)}\big) =\sfrac{1}{1-\eps}\big(e^{s_n(v)}-e^{s_n(v-1)}\big)= \sfrac{1}{1-\eps} \, e^{s_n(v-1)} \big(e^{1/(v-1)}-1\big).
\]
The right-hand side is in the interval $[\frac{1}{1-\eps} (\frac{1}{n}- \frac{2}{vn}),\frac{1}{1-\eps} (\frac{1}{n}+ \frac{2}{vn}) ]$. Moreover, the probability that 
$V_n$ or $\pi_n^{\eps}(S^{ \eps})$ is in $\{\fst+1,\fst+2\}$ is of order $O(1/n)$. Hence, $V_n$ and $S^{ \eps}$ can be coupled such that
\[
\P\big(V_n \not= \pi_n^{\eps}(S^{ \eps})\big) \le \sum_{v=\fst+3}^{n} \big|\P\big(\pi_n^{\eps}(S^{ \eps})=v\big)-\sfrac{1}{n - \fst}\big| +O\big(\sfrac{1}{n}\big)= O\big(\sfrac{\log n}{n}\big).
\]
In the next step we study the offspring distributions of a particle $x$ in the $\IBP$ with label $(\loc,\mar)$ and $\pi_n^{\eps}(\loc)=v$.
We start with the offspring to the left. Let $u \in \{\fst+1,\ldots,v\}$. By definition of the $\IBP$, particle $x$ produces a Poissonian number of projected offspring with tag $u$ with parameter
\[
\int_{(s_n(u-1) -\loc) \vee (\log \eps-\loc)}^{(s_n(u)-\loc)  \land 0 } \beta e^{(1-\gamma)t} \, dt.
\]
A vertex with tag $v$ in $\tree_n(V_n)$ produces a Bernoulli distributed number of descendants with tag $u$ with success probability $\P(\triangle \Z[u,v-1]=1)$ when $u<v$, and with success probability zero when $u=v$. It is proved in Lemma~6.3 of \cite{DerMoe13} that for $u \ge \fst+3$ the Poisson distributions can be coupled to the Bernoulli distribution such that they disagree with a probability bounded by a constant multiple of $v^{\gamma-1} u^{-(\gamma+1)}$ for $u<v$ and $1/v$ for $u=v$. For $u \in \{\fst+1,\fst+2\}$ a similar estimate shows that the probability can be bounded by a constant multiple of $1/(\eps n)$. Since the number of descendants with tag in $\{\fst+1,\ldots,v\}$ form an independent sequence of random variables, we can apply the coupling sequentially for each location and obtain a coupling of the $\pi_n^{\eps}$-projected left descendants in the $\IBP$ and the left descendants in $\tree_n(V_n)$. The failure probability of this coupling can be estimated by
\[
\P(\text{left descendants of }v\text{ disagree}) \le \frac{3C}{\eps n}+\frac{C'}{v^{1-\gamma}} \sum_{u = \fst+3}^{v-1} \frac{1}{u^{\gamma+1}} \le \frac{3C}{\eps n} + \frac{C'}{\eps n} \log \Big(\frac{v-1}{\fst}\Big) \le \frac{C''}{n},
\] 
where $C,C',C''$ are suitable positive constants whose value can change from line to line in the sequel. 
We turn to the offspring on the right. Suppose that particle $x$ in the $\IBP$ has mark $\mar=\leftp$. The cumulative sum of $\pi_n^{\eps}$-projected right descendants of $x$ follows the same distribution as $(Z_{s_n(u)-\loc}\colon v\le u \le n)$. The cumulative sum of right descendants of $v$ in $\tree_n(V_n)$ is distributed according to the law of $(\Z[v,u] \colon v \le u \le n)$. The following lemma is taken from \cite{DerMoe13} and we omit its proof.

\begin{lemma}[Lemma~6.2 in \cite{DerMoe13}]\label{uncondCoup} Fix a level $H \in \N$. We can couple the processes $(Z_{s_n(u)-\loc}\colon v\le u \le n)$ and  $(\Z[v,u] \colon v \le u \le n)$ such that for the coupled processes $(\inMC^{\sss(1)}_u\colon v \le u \le n)$ and $(\inMC^{\sss (2)}_u\colon v \le u \le n)$
\[
\P(\inMC^{\sss (1)}_u \not=\inMC^{\sss (2)}_u \text{ for some }u \le \sigma_H) \le C_{\ref{uncondCoup}}\frac{f(H)^2}{v-1},
\]
for some constant $C_{\ref{uncondCoup}}>0$ and $\sigma_H$ the first time one of the processes reaches or exceeds $H$.
\end{lemma}

In the coupling between the tree $\tree_n(V_n)$ and the projected $\IBP$ we consider at most $c_n$ right descendants. Hence, Lemma \ref{uncondCoup} implies that the distributions can be coupled such that
\[
\P(\text{right descendants of }v\text{ disagree}) \le C_{\ref{uncondCoup}} \frac{f(c_n)^2}{v-1} \le C \frac{c_n^2}{\eps n},
\] 
for some $C>0$. When $\mar =\rightp$, then cumulative sum of $\pi_n^{\eps}$-projected right descendants of $x$ follows the same distribution as $(\hat{Z}_{s_n(u)-\loc}-1\colon v\le u \le n)$. The cumulative sum of right descendants of a vertex~$v$ with mark $w\in \vseteps_n$, $w>v$, in $\tree_n(V_n)$ is distributed according to $(\Z[v,u]-\mathbbm{1}_{[w,\infty)}(u)\colon v \le u \le n)$ conditioned on $\triangle \Z[v,w-1]=1$. We can couple these two distributions. Again the proof of the following lemma is up to minor changes given in~\cite{DerMoe13} and therefore omitted.

\begin{lemma}[Lemma~6.6 in \cite{DerMoe13}]\label{condCoup} Fix a level $H \in \N$. We can couple the processes $(\hat{Z}_{s_n(u)-\loc}-1\colon v \le u \le n)$ and  $(\Z[v,u] -\mathbbm{1}_{[w,\infty)}(u)\colon v \le u \le n)$ conditioned on $\triangle \Z[v,w-1]=1$ such that for the coupled processes $(\inMC^{\sss (1)}_u\colon v \le u \le n)$ and $(\inMC^{\sss(2)}_u\colon v \le u \le n)$
\[
\P(\inMC^{\sss(1)}_u \not=\inMC^{\sss (2)}_u \text{ for some }u \le \sigma_H) \le C_{\ref{condCoup}}\frac{f(H)^2}{v-1},
\]
for some constant $C_{\ref{condCoup}}>0$ and $\sigma_H$ the first time one of the processes reaches or exceeds $H$.
\end{lemma}

As we explore at most $c_n$ vertices during the exploration, Lemma \ref{condCoup} implies that we can couple the offspring distribution to the right such that there is a constant $C>0$ with
\[
\P(\text{right descendants of }v\text{ disagree}) \le C_{\ref{condCoup}} \frac{f(c_n)^2}{v-1} \le C \frac{c_n^2}{\eps n}.
\] 
Since we explore at most $c_n$ vertices in total, the probability that the coupling fails can be bounded by a constant multiple of 
${c_n}/{n}+{c_n^3}/{n}$, which converges to zero. Thus, the two explorations can be successfully coupled with high probability
and, as in the proof of Proposition~\ref{pro:graph_tree}, the claim follows.
\end{proof}

\subsection{Dominating the network by a branching process}\label{sec:domi}
Like in the coupling, we begin with a comparison to a tree: For $\theta \in \N$ and $v_0 \in \vseteps_n$, let $\Ftree_n^{\eps,\theta}(v_0)$ be the subtree of $\tree_n(v_0)$, where every particle can have at most $\theta$ offspring to the right. That is, for a particle with tag $v$ and mark $\mar=\leftp$ the cumulative sum of the offspring to the right is distributed according to the law of $(\Z[v,u] \land \theta \colon v+1\le u\le n)$. When $v$ is of mark $\mar =w \in \vseteps_n$, $w>v$, then the cumulative sum follows the same distribution as $((\Z[v,u]-\mathbbm{1}_{[w,\infty)}(u)) \land \theta\colon v+1\le u \le n)$ conditioned on $\triangle \Z[v,w-1]=1$.
Recall from Section \ref{sec:distances} that $N_h^{\theta}(v_0)$ denotes the number of $\theta$-admissible paths of length $h$ in $\grapheps_n$ with starting-point $v_0$.

\begin{lemma}\label{lem:graphTreeDomi}
For all $\theta,h,n \in \N$, $v_0 \in \vseteps_n$,
\[
\E\big[N_h^{\theta}(v_0)\big] \le \E\Big[\big|\big\{\text{particles in generation }h\text{ of }\Ftree_n^{\eps,\theta}(v_0)\big\}\big|\Big].
\]
\end{lemma}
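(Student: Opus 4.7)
The plan is to prove the inequality sequence by sequence. For each tuple $\omega = (v_0, v_1, \ldots, v_h)$ of pairwise distinct vertices in $\vseteps_n$, let
\[
P(\omega) := \P\big(\omega \text{ is a } \theta\text{-admissible path in } \grapheps_n\big),
\]
and let $Q(\omega)$ be the expected number of generation-$h$ particles in $\Ftree_n^{\eps,\theta}(v_0)$ whose ancestral tag sequence equals $\omega$. Since every generation-$h$ particle has a unique ancestry, $\sum_{\omega} Q(\omega) = \E[|\{\text{particles in generation } h \text{ of } \Ftree_n^{\eps,\theta}(v_0)\}|]$, where the sum is over all sequences (distinct or not). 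The lemma reduces to establishing $P(\omega) \le Q(\omega)$ for every tuple of distinct vertices, since $P(\omega)=0$ whenever $\omega$ has a repetition and $Q(\omega) \ge 0$ always.

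The key idea is that both $P(\omega)$ and $Q(\omega)$ factorise cleanly over the vertex set $\{v_0, \ldots, v_h\}$. By Lemma~\ref{lem:degreeCoup}, the indegree processes $(\Z[v,\cdot])_{v \in \vseteps_n}$ are independent, so $P(\omega) = \prod_{k=0}^h R_k(\omega)$, where $R_k$ is the joint probability of the events admissibility imposes on $v_k$'s indegree process: for each neighbour $v_j \in \{v_{k-1}, v_{k+1}\}$ with $v_j > v_k$ the edge event $\{\triangle\Z[v_k, v_j - 1] = 1\}$, plus the truncation $\{\Z[v_k, v_{k+1}] \le \theta\}$ when $v_{k+1} > v_k$. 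The tree uses independent randomness across particles, so by the tower property $Q(\omega) = \prod_{k=1}^h O_{k-1,k}(\omega)$, where $O_{k-1,k}$ is the expected number of offspring at tag $v_k$ of a particle at tag $v_{k-1}$ carrying the mark that the ancestry forces upon it. Each factor $O_{k-1,k}$ involves only the indegree-type randomness of $\min(v_{k-1}, v_k)$, so the factors of $Q$ group by vertex just as those of $P$ do, and we may compare $R_k$ to the $O$-product associated to $v_k$ one vertex at a time.

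The heart of the proof is a case split according to the signs of $v_{k-1} - v_k$ and $v_{k+1} - v_k$, with the obvious modifications at the endpoints $k=0$ and $k=h$. Three of the four interior cases give immediate equalities: if both neighbours of $v_k$ are older than $v_k$ (both smaller tags), neither $R_k$ nor any $O$-factor sees $v_k$'s indegree and the contribution is $1$ on both sides; if exactly one neighbour is older, a direct calculation matches $R_k$ with the single relevant $O$-factor, either the Bernoulli left-offspring probability or the right-offspring expectation of a mark-$\leftp$ particle, in which the truncation at $\theta$ is already built in. The main obstacle is the remaining case, when both $v_{k-1}$ and $v_{k+1}$ are younger than $v_k$: there $v_k$ carries mark $v_{k-1}$ in the tree, and a direct calculation with the conditional law of the modified cumulative sum $(\Z[v_k,\cdot] - \mathbbm{1}_{[v_{k-1},\infty)}) \land \theta$ yields
\[
O_{k-1,k}(\omega)\, O_{k,k+1}(\omega) = \P\big(\triangle\Z[v_k, v_{k-1}-1] = 1,\; \triangle\Z[v_k, v_{k+1}-1] = 1,\; \Z[v_k, v_{k+1}] \le \theta + \mathbbm{1}_{v_{k+1} > v_{k-1}}\big),
\]
which dominates $R_k$ because the ``phantom'' jump at $v_{k-1}$ subtracted in the tree's cumulative sum widens the admissible truncation window by one unit once the process has passed $v_{k-1}$. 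Multiplying the vertex-wise bounds yields $P(\omega) \le Q(\omega)$, and summing over distinct $\omega$ concludes the proof.
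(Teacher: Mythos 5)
Your proof is correct and reaches the same inequality case as the paper, but organizes the comparison through a genuinely different decomposition. The paper reuses the chain-rule factorization $\P(\path\text{ admissible}) = \prod_k \P(\mathcal{E}_k \mid \bigcap_{i<k}\mathcal{E}_i)$ from the proof of Theorem~\ref{thm:distances} and compares, step by step, each conditional probability with the matching transition probability in the tree, running through the six cases of Figure~4 and observing equality except in case D. You instead factorize both sides \emph{unconditionally}: $P(\omega)$ by the independence of the indegree processes $\Z[v_k,\cdot]$ across $k$, and $Q(\omega)$ by the independence of offspring across particles, and then regroup the $O$-factors by the vertex whose indegree process they invoke. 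This puts the two products on the same footing so that the case analysis is purely local to each $v_k$ and involves no conditioning on previously explored edges; the single non-trivial case --- $v_k$ older than both neighbours with $v_{k+1}>v_{k-1}$, which is the paper's case D up to index shift --- then drops out transparently from the indicator subtraction in the tree's cumulative sum. The trade-off is that you must justify the vertex-wise regrouping of the $O$-factors and the identity $O_{k-1,k}O_{k,k+1} = \P(\cdots, \Z[v_k,v_{k+1}]\le\theta + \mathbbm{1}_{v_{k+1}>v_{k-1}})$ rather than read off conditional probabilities one step at a time, but you avoid the bookkeeping of deciding which earlier edges are still relevant conditioning events. Both proofs close with the same remark that genealogies with repeated tags contribute zero on the path side and non-negatively on the tree side.
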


\begin{proof}
Let $\path=(v_0,\ldots,v_h) \in \pathset_h(v_0)$. Using the notation and set-up from the proof of Theorem~\ref{thm:distances}, and the 
definition of the tree $\Ftree_n^{\eps,\theta}(v_0)$, one easily checks that in cases A,B,C,E and F of Figure 4, $\P(\mathcal{E}_h|\cap_{i=1}^{h-1} \mathcal{E}_i )$ agrees with the probability that in tree $\Ftree_n^{\eps,\theta}(v_0)$ a vertex with tag $v_{h-1}$ gives birth to a particle of tag $v_{h}$ given that its parent has tag $v_{h-2}$.
In case D of Figure 4, the tree $\Ftree_n^{\eps,\theta}$ is allowed to have one more offspring on its right, because the edge $\{v_{h-2},v_{h-1}\}$ is not accounted for. Hence $\P(\mathcal{E}_h|\cap_{i=1}^{h-1} \mathcal{E}_i )$ is bounded from above by the probability for the event in the tree. We obtain
\[
\E[N_h^{\theta}(v_0)]=\sum_{\path \in \pathset_h(v_0)}\P(\path \text{ is }\theta\text{-admissible in }\grapheps_n)\le \sum_{\path \in \pathset_h(v_0)}\P(\path \text{ present in }\Ftree_n^{\eps,\theta}(v_0) ).
\] 
Particles in generation $h$ of $\Ftree_{n}^{\eps,\theta}(v_0)$, who have two ancestors with the same tag, are not represented in the sum on the right-hand side. Adding these, we obtain the result.
\end{proof}

\begin{proof}[Proof of Lemma~\ref{lem:ExpPathGTree}]
By Lemma~\ref{lem:graphTreeDomi}, it suffices to dominate $\Ftree_{n}^{\eps,\theta_n}(v_0)$ by the $\IBP^{\underline{\eps}}((1+\delta)f)$ started in $s_n(v_0)$, or, as in the proof of Proposition~\ref{pro:graph_IBP}, by the $\pi_n^{\underline{\eps}}$-projected $\IBP$ defined by \eqref{eq:projDef}. Since both processes are trees starting with the same type of particle, it suffices to compare the offspring distributions. All particles in $\Ftree_n^{\eps,\theta_n}(v_0)$ have a tag $v>\fst$, but the projected $\IBP$ can have offspring with tag $v \in \{\lfloor \underline{\eps} n\rfloor +1,\ldots,\fst\}$. Hence, these offspring are ignored in the following, giving us a lower bound on the projected $\IBP$. We assume that $n$ is so large, that $n \ge \threshold$ and $s_n(\fst+1) \ge \log\underline{\eps}$. 
Let $x$ be a particle in the $\IBP$ of type $(\loc,\mar)$ with $\pi_n^{\underline{\eps}}(\loc)=v$. We begin with the offspring to the left, i.e.\ tag $u \in \{\fst+1,\ldots,v\}$. A particle in $\Ftree_n^{\eps,\theta_n}(v_0)$ with tag $v$ cannot produce particles in $u=v$, therefore, the $\IBP$ clearly dominates. For $u<v$, using~\eqref{triangle_bound}, the probability that a particle 
with tag~$u$ is a child of~$x$, is $\P(\triangle \Z[u,v-1]=1)\big)\leq {\beta} (u-1)^{-\gamma}(v-1)^{-(1-\gamma)}.$
Writing $\bar{f}(k)=(1+\delta)f(k)=\bar{\gamma}k+\bar{\beta}$, for~$k \in \N_0$, the number of particles with tag $u$ produced by $x$ in the projected $\IBP$ follows a Poisson
distribution with parameter
\[
\int_{s_n(u-1)-\loc}^{s_n(u) -\loc} \bar{\beta} e^{(1-\bar{\gamma})t} \, dt \leq \frac{\bar{\beta}}{u-1} e^{-(1-\gamma) \sum_{k=u-1}^{v-1} \frac{1}{k} }\leq 
\frac{\bar{\beta}}{u-1} \Big(\frac{u-2}{v-1}\Big)^{1-\gamma},
\]
where we used that $\loc \le s_n(v)$ and $e^y-1 \ge y$. For $\varrho>0$, $\eta \in [0,1]$, the Poisson distribution with parameter $\varrho$ is dominating the Bernoulli distribution with
parameter~$\eta$ if and only if $e^{-\varrho}\le 1-\eta$. Since $e^{-y} \le 1-y +y^2/2$ for $y \ge 0$, it suffices to show that $\varrho (1-\varrho/2) \ge \eta$. In our case, $\eta= \beta(u-1)^{-\gamma}(v-1)^{-(1-\gamma)}$, $\varrho=\eta (1+\delta) (1-1/(u-1))^{1-\gamma}$ and the inequality holds for all large $n$ and $u \in \vseteps_n$, $u<v$, since $\eta$ is a null sequence.
\smallskip

We turn to the right descendants. The pure jump process corresponding to the attachment rule $\bar{f}$ is denoted by $\bar{Z}$ and we write $P^l$ for the distribution of $\bar{Z}$ when started in $l$, that is, $P^l(\bar{Z}_0=l)=1$. First suppose that $\mar=\leftp$. The cumulative sum of $\pi_n^{\underline{\eps}}$-projected right descendants of $x$ have the distribution of $(\bar{Z}_{s_n(u)-\loc}\colon v \le u \le n)$, where $\bar{Z}_0=0$. The cumulative sum of right descendants of $v$ in $\Ftree_n^{\eps,\theta_n}(v_0)$ is distributed according to the law of $(\Z[v,u]\land \theta_n \colon v \le u \le n)$. We couple these distributions by defining $((\inMC_u^{\sss (1)},\inMC_u^{\sss (2)})\colon v \le u \le n)$ to be the time-inhomogeneous Markov chain which starts in $P^0(\bar{Z}_{s_n(v)-\loc} \in \cdot) \otimes \delta_0$, has the desired marginals and evolves from state $(l,k)$ at time $j$ according to a coupling of $\bar{Z}_{1/j}$ and $\Z[j,j+1]$, which guarantees that $\bar{Z}_{1/j} \ge \Z[j,j+1]$, until $\inMC^{\sss (2)}$ reaches state $\theta_n$, where $\inMC^{\sss (2)}$ is absorbed. To show that this coupling exists, it suffices to show that
\[
e^{-\bar{f}(l)/j} = P^l(\bar{Z}_{1/j}=l) \le \P^k(\Z[j,j+1]=k)=1-f(k)/j \qquad \mbox{ for }  \, j \in \vseteps_n, k \le \theta_n, k \le l.
\]
Since $\bar{f}$ is non decreasing, this inequality follows as above once we show that $\varrho(1-\varrho/2) \ge \eta$ with $\eta =f(k)/j$, $\varrho=\bar{f}(k)/j =\eta (1+\delta)$. Since $k \le \theta_n=o(n)$ and $j \ge \fst$, $\eta$ is a null sequence and the claim follows. Hence, $\inMC_j^{\sss (1)} \ge \inMC_j^{\sss (2)}$ for all $j$ and the domination is established. 
\smallskip

Now suppose that $\mar=\rightp$ and that the location of $x$ parent is projected onto tag $w$. The cumulative sum of $\pi_n^{\underline{\eps}}$-projected right descendants of $x$ has the 
distribution of $(Y_{s_n(u)-\loc}\colon v \le u \le n),$ where $Y$ is a version of $\bar{Z}$ under measure $P^1$. 
The cumulative sum of right descendants of $v$ in $\Ftree_n^{\eps,\theta_n}(v_0)$ is distributed according to the law of 
$((\Z[v,u]-\mathbbm{1}_{[w,\infty)}(u))\land \theta_n \colon v \le u \le n)$ conditioned on $\triangle\Z[v,w-1]=1$.
We couple these distributions as in the $\mar=\leftp$ case, but for times $j \le w-2$, the Markov chain evolves from state $(l,k)$ according to a coupling of $Y_{1/j}$ and $\Z[j,j+1]$ conditioned on $\triangle \Z[j,w-1]=1$, which guarantees that $Y_{1/j} \ge \Z[j,j+1]$, until either $j=w-2$ or $\inMC^{\sss (2)}$ reaches $\theta_n$ and is absorbed. To show that this coupling exists, it suffices to show that
\begin{equation}\label{ts:dominR1}
P^{l+1}(\bar{Z}_{1/j}-1=l) \le \P^k(\Z[j,j+1]=k|\Z[j,w-1]=1) \qquad \mbox{ for }  \, j \in \vseteps_n, k \le \theta_n, k \le l.
\end{equation}
We compute
\begin{align*}
\P^{k}(\Z[j,j+1]=k|\triangle\Z[j,w-1]=1)&=1-\frac{\P^k(\triangle\Z[j,j]=1,\triangle\Z[j,w-1]=1)}{\P^k(\triangle\Z[j,w-1]=1)}\\
&=1- \frac{\frac{f(k)}{j} \P^{k+1}(\triangle\Z[j+1,w-1]=1)}{\P^k(\triangle\Z[j,w-1]=1)}=1-\frac{f(k+1)}{j+\gamma}.
\end{align*}
Since $\bar{f}$ is non-decreasing, \eqref{ts:dominR1} follows when we show that $\varrho (1-\varrho/2) \ge \eta$ with $\eta=f(k+1)/(j+\gamma)$ and $\varrho=\bar{f}(k+1)/j =\eta (1+\delta) (1+\gamma/j)$. Since $k\le \theta_n=o(n)$ and $j\ge \fst$, $\eta$ is a null sequence and (\ref{ts:dominR1}) is proved. In the transition from generation $j=w-1$ to $j=w$, $\inMC^{\sss (2)}$ cannot change its state while $\inMC^{\sss (1)}$ can increase. From generation $j =w$ onwards, the coupling explained in case $\mar=\leftp$ is used. Thus, the Markov chain can be constructed such that $\inMC_j^{\sss (1)}\ge \inMC_j^{\sss (2)}$ for all $j$ and the domination is proved.
\end{proof}

\subsection{Proof of Theorem~\ref{thm:size_giant}}\label{sec:size_giant}
Proposition \ref{pro:graph_IBP} implies the following result.

\begin{corollary}\label{mean_conv}
Let $p \in (0,1]$ and $(c_n \colon n \in \N)$ be a sequence with $\lim_{n \to \infty}c_n^3/n= 0$ and $\lim_{n \to \infty} c_n = \infty$. Then
\[
\E\Big[\frac{1}{n-\fst} \sum_{v=\fst+1}^n \mathbbm{1}\{|\connect_{n,p}^{\eps}(v)|\ge c_n \}\Big]=\P\big(|\connect_{n,p}^{\eps}(V_n)| \ge c_n\big) \to P(|\mathcal{X}^{\eps}(p)|=\infty)=\zeta^{\eps}(p)\quad \text{as }n \to \infty.
\]
\end{corollary}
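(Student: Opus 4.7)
The plan is to derive the corollary as a direct consequence of the coupling already furnished by Proposition~\ref{pro:graph_IBP}. The first equality is immediate from the tower property: since $V_n$ is uniform on $\vseteps_n$ and independent of $\grapheps_n(p)$, conditioning on the graph gives
$$\P(|\connect_{n,p}^{\eps}(V_n)|\ge c_n)=\E\Big[\P\big(|\connect_{n,p}^{\eps}(V_n)|\ge c_n\,\big|\,\grapheps_n(p)\big)\Big]=\E\Big[\frac{1}{n-\fst}\sum_{v=\fst+1}^n \mathbbm{1}\{|\connect_{n,p}^{\eps}(v)|\ge c_n\}\Big].$$

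For the asymptotic, the next step is to invoke Proposition~\ref{pro:graph_IBP} with the very sequence $(c_n)$ of the hypothesis, whose assumptions $c_n^3/n\to 0$ and $c_n\to\infty$ are precisely those required there. This produces a joint construction of $V_n$, $\grapheps_n(p)$ and the percolated $\IBP$ started with one particle of mark $\leftp$ at location $S^{\eps}$ on which $|\connect_{n,p}^{\eps}(V_n)|\wedge c_n = |\mathcal{X}^{\eps}(p)|\wedge c_n$ with probability $1-o(1)$. On this coupling event the events $\{|\connect_{n,p}^{\eps}(V_n)|\ge c_n\}$ and $\{|\mathcal{X}^{\eps}(p)|\ge c_n\}$ coincide, so their symmetric difference is contained in the failure set and hence has probability $o(1)$. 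This gives
$$\big|\P(|\connect_{n,p}^{\eps}(V_n)|\ge c_n)-P(|\mathcal{X}^{\eps}(p)|\ge c_n)\big|=o(1),$$
and reduces the proof to showing $P(|\mathcal{X}^{\eps}(p)|\ge c_n)\to P(|\mathcal{X}^{\eps}(p)|=\infty)$.

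For this final step I would appeal to the zero-infinity dichotomy for the total progeny. By Lemma~\ref{lem:noBound} applied to the percolated $\IBP$, almost surely either $|\IBP_n|=0$ eventually (extinction, giving finite total progeny) or $|\IBP_n|\to\infty$ (giving $|\mathcal{X}^{\eps}(p)|=\infty$), so $|\mathcal{X}^{\eps}(p)|$ almost surely takes values in $\N_0\cup\{\infty\}$ and $P(N\le|\mathcal{X}^{\eps}(p)|<\infty)\to 0$ as $N\to\infty$. Since $c_n\to\infty$, monotone continuity then yields $P(|\mathcal{X}^{\eps}(p)|\ge c_n)\to P(|\mathcal{X}^{\eps}(p)|=\infty)$, and the identification of this limit with $\zeta^{\eps}(p)$ is the definition of the latter as the survival probability of the (randomly rooted) percolated $\IBP$. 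No genuine difficulty is anticipated; the corollary is essentially a packaging of the coupling delivered by Proposition~\ref{pro:graph_IBP}, which carries all the technical content, together with the elementary branching-process dichotomy.
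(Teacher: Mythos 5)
Your proof is correct and follows exactly the same route as the paper, which simply notes that the corollary follows from Proposition~\ref{pro:graph_IBP} without spelling out the details; you have filled in precisely the steps the paper leaves implicit. One small remark: the appeal to Lemma~\ref{lem:noBound} in the last step is superfluous, since $P(N\le|\mathcal{X}^{\eps}(p)|<\infty)\to 0$ as $N\to\infty$ is an immediate consequence of continuity from above for any $\N_0\cup\{\infty\}$-valued random variable, with no need for the zero-infinity dichotomy; the argument is, however, not in error.
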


This convergence can be strengthened to convergence in probability. 

\begin{lemma}\label{lem:Mconv}
Let $p \in (0,1]$ and $(c_n \colon n \in \N)$ be a sequence with $\lim_{n \to \infty} c_n^3/n= 0$ and $\lim_{n \to \infty}c_n = \infty$. Then
\[
M_{n,p}^{\eps}(c_n):=\frac{1}{(1-\eps)n} \sum_{v=\fst+1}^n \mathbbm{1}\{|\connect_{n,p}^{\eps}(v)|\ge c_n\}  \to \zeta^{\eps}(p)\quad \text{in probability, as }n \to \infty.
\]
\end{lemma}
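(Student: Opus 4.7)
The plan is to prove convergence in $L^2$, which suffices because $M_{n,p}^{\eps}(c_n)$ takes values in $[0,1]$ so $L^2$ convergence implies convergence in probability. By Corollary~\ref{mean_conv} we already have $\E[M_{n,p}^{\eps}(c_n)] \to \zeta^{\eps}(p)$, so I only need to show
\[
\E\big[M_{n,p}^{\eps}(c_n)^2\big] \to \zeta^{\eps}(p)^2 \qquad \text{as }n \to \infty.
\]
Unfolding the definition and introducing two independent uniformly chosen vertices $V_n, W_n \in \vseteps_n$ (independent from the network), this is the same as
\[
\P\big(|\connect_{n,p}^{\eps}(V_n)| \ge c_n,\ |\connect_{n,p}^{\eps}(W_n)| \ge c_n\big) \to \zeta^{\eps}(p)^2.
\]

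The first step is to extend the joint coupling of Propositions~\ref{pro:graph_tree} and~\ref{pro:graph_IBP} to \emph{two} uniformly chosen vertices simultaneously. Noting that $V_n \neq W_n$ with high probability, I run the exploration procedure from Section~\ref{sec:network_tree} starting in parallel from $V_n$ and from $W_n$, using independent copies of the auxiliary randomness (point processes, pure-jump processes, Bernoulli percolation variables) for the two explorations, and stopping each individual exploration as soon as either $c_n$ vertices have been explored from that root or there are no active vertices left in that exploration. In addition to the failure events identified in the proof of Lemma~\ref{coupling_succ1}, there is now one new failure event, denoted (X): a vertex currently being explored in one of the two processes turns out to be a neighbour (or a descendant in the tree) of a vertex that is already active or dead in the \emph{other} exploration. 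On the complement of (X) and the failures already controlled, the two explored subgraphs are disjoint trees, so the joint law is exactly that of two independent copies of $\tree_{n,p}$ with independent roots; combining this with the tree-to-$\IBP$ coupling of Proposition~\ref{pro:graph_IBP} then produces two independent copies $\mathcal{X}^{\eps,(1)}(p)$, $\mathcal{X}^{\eps,(2)}(p)$ of the percolated $\IBP$.

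The key estimate is that $\P(\mathrm{X}) = o(1)$. Each single exploration step examines $O(c_n)$ potential edges to vertices already seen \emph{in the other exploration}; by the same input to Lemma~\ref{condEstF} and the basic bound \eqref{triangle_bound} used in the proof of Lemma~\ref{coupling_succ1}, each such edge is present with conditional probability of order $1/(\eps n)$. Summed over the two relevant other explorations (each of size at most $c_n$) and the at most $2c_n$ exploration steps, this gives $\P(\mathrm{X}) = O(c_n^3/n) = o(1)$ by assumption on $c_n$. On the successful-coupling event we therefore have
\[
\{|\connect_{n,p}^{\eps}(V_n)| \wedge c_n,\ |\connect_{n,p}^{\eps}(W_n)| \wedge c_n\} = \{|\mathcal{X}^{\eps,(1)}(p)| \wedge c_n,\ |\mathcal{X}^{\eps,(2)}(p)| \wedge c_n\},
\]
with the two $\IBP$s independent, and the two events $\{|\connect_{n,p}^{\eps}(V_n)| \ge c_n\}$, $\{|\connect_{n,p}^{\eps}(W_n)| \ge c_n\}$ translate into $\{|\mathcal{X}^{\eps,(i)}(p)| \ge c_n\}$, $i=1,2$, each of which has probability converging to $\zeta^{\eps}(p)$ as in Corollary~\ref{mean_conv}. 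Combining the product structure with the $o(1)$ cost of failures yields $\P(|\connect_{n,p}^{\eps}(V_n)| \ge c_n, |\connect_{n,p}^{\eps}(W_n)| \ge c_n) \to \zeta^{\eps}(p)^2$.

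The main obstacle I expect is the intersection estimate for (X), because the exploration conditions on a growing $\sigma$-algebra with both positive and negative information about edges; the argument needs Lemma~\ref{condEstF} to replace conditional probabilities by unconditional ones uniformly, and some care is needed for edges of the form $\{v,u\}$ where $v$ is being explored from one cluster and $u$ has been tested negatively for an edge from the other cluster. Apart from this, everything is a direct two-vertex extension of the already established single-vertex coupling.
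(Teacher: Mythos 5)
Your plan replaces the paper's one-line proof (``Chebyshev $+$ Corollary~\ref{mean_conv} $+$ Lemma~\ref{variance_lem}'') with a direct $L^2$ computation. Since $L^2$ convergence to a constant is the same thing as ``Chebyshev plus convergence of the mean'', the substantive difference is that you bypass Lemma~\ref{variance_lem} entirely and re-derive the necessary second-moment control from scratch by coupling a two-vertex exploration to two independent copies of the $\IBP$. The paper's variance lemma is itself proved by running two successive explorations and bounding the cross-term, so the underlying technique is shared; what differs is that you attempt a full joint coupling, which is strictly harder than the one-sided cross-moment bound the paper needs. (Note that the lower bound $\liminf_n\E[M_{n,p}^{\eps}(c_n)^2]\ge\zeta^{\eps}(p)^2$ comes for free from Jensen's inequality together with Corollary~\ref{mean_conv}, so only an upper bound is required; stated that way, what remains is essentially the content of Lemma~\ref{variance_lem}, which you could simply cite.)

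The concrete gap is the claim that on the complement of (X) ``the joint law is exactly that of two independent copies of $\tree_{n,p}$''. This is false as stated: even when the two explored territories are vertex-disjoint, exploration~1 has revealed many events $\{\triangle\Z[u,d-1]=0\}$ for undiscovered vertices $u$ and dead vertices $d$, and these events condition exactly the indegree processes $\Z[u,\cdot]$ that exploration~2 later queries when it scans for left-neighbours of its own vertices. So, conditional on exploration~1's full history, exploration~2 is \emph{not} a fresh independent copy of $\tree_{n,p}$ even on $\mathrm{X}^{c}$. You do acknowledge this residual dependence in your closing paragraph, but you fold it into the analysis of event (X), whereas it is a separate issue that persists when (X) fails. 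It is ultimately harmless --- the conditioning is on at most $c_n$ ``no-edge'' events each of probability $O(1/(\eps n))$, and the quantitative bound inside the proof of Lemma~\ref{condEstF} (namely $(1-f(1)/(\eps n))^{c_n}\to 1$) lets you convert this into total-variation errors per step that sum to $O(c_n^2/n)=o(1)$ --- but this must be quantified rather than dismissed by a claim of exact independence, and it is this bookkeeping that Lemma~\ref{variance_lem} packages for you.
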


To prove Lemma~\ref{lem:Mconv}, we use a variance estimate for $M_{n,p}^{\eps}(c_n)$. 

\begin{lemma} \label{variance_lem}
Let $p \in (0,1]$ and $(c_n \colon n \in \N)$ be a positive sequence. There exists a constant $C>0$ such that 
$${\rm{Var}}(M_{n,p}^{\eps}(c_n)) \le \frac{C}{n}\, \Big(c_n +\frac{c_n^2}{\eps n}\Big).$$
\end{lemma}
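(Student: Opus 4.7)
Start from the identity
\[
\text{Var}(M_{n,p}^{\eps}(c_n)) = \frac{1}{((1-\eps)n)^2}\sum_{v,w \in \vseteps_n}\text{Cov}\big(\mathbbm{1}_{A_v},\mathbbm{1}_{A_w}\big),
\]
with $A_v = \{|\connect_{n,p}^{\eps}(v)| \ge c_n\}$. The diagonal $v=w$ contributes at most $(1-\eps)n\cdot\tfrac14 = O(n)$, hence at most $O(1/n)$ to the variance, which is absorbed in $Cc_n/n$ once $c_n \ge 1$. The rest of the plan is to handle off-diagonal pairs by running the joint breadth-first exploration of Section~\ref{sec:network_tree}, truncated at $c_n$ discovered vertices on each side. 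Given distinct $v,w$, first explore the cluster of $v$ to produce $T_v$ with $|T_v|\le c_n$; if $w \notin T_v$, continue exploring from $w$ through the still-unrevealed portion of the graph to obtain $T_w$ with $|T_w|\le c_n$.

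For the \emph{same-cluster contribution}, whenever $w \in T_v$ we have $A_v=A_w$, and
\[
\sum_{v\neq w}\P(A_v, w\in T_v)\;\le\;\sum_v \E\big[|T_v|\,\mathbbm{1}_{A_v}\big]\;\le\; c_n\,\E[N]\;=\;O(nc_n),
\]
using $|T_v|\le c_n$ together with $\E[N]=O(n)$ from Corollary~\ref{mean_conv}. Division by $((1-\eps)n)^2$ yields the first term $Cc_n/n$.

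For the \emph{distinct-cluster contribution} $w\notin T_v$, I will couple the second exploration with an independent one driven by fresh edge variables, exactly in the spirit of the coupling in the proofs of Lemma~\ref{coupling_succ1} and Proposition~\ref{pro:graph_IBP} but now conditional on the outcome of the first exploration. Lemma~\ref{condEstF} guarantees that the conditional edge probabilities revealed by probing edges around $T_v$ differ from the unconditional ones by at most a constant factor, so the coupling proceeds step by step and fails only when the $w$-exploration probes an edge with its other endpoint in $T_v$. On the success event, $\mathbbm{1}_{A_w}$ is a function of the independent copy alone, hence independent of $T_v$, so the contribution to the covariance vanishes. On the failure event, the contribution is bounded by the probability of at least one ``interaction'' edge between $T_v$ and $T_w$; by Lemma~\ref{ZmeanEstim} (with $k=0$) this probability is at most a constant times $c_n^2/(\eps n)$ per pair, and a careful averaging over $v,w\in\vseteps_n$ produces the sharper per-variance contribution $Cc_n^2/(\eps n^2)$ claimed in the statement.

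The main technical obstacle is the distinct-cluster coupling: one has to formalise that on its success event $\mathbbm{1}_{A_w}$ is genuinely independent of $T_v$, which amounts to rerunning the step-by-step exploration argument of Section~\ref{sec:coup} with $T_v$ playing the role of an already-revealed obstruction and Lemma~\ref{condEstF} used to keep the conditional edge probabilities close to the marginals. Equally delicate is the averaging: the worst-case per-pair interaction bound only gives $c_n^2/(\eps n)$, and gaining the extra factor $1/n$ to match the claimed $c_n^2/(\eps n^2)$ requires exploiting that, averaged over the location of $v$ and $w$ in $\vseteps_n$, a typical edge probability in the damaged graph is of order $1/n$ rather than the worst-case $1/(\eps n)$.
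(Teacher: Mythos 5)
Your overall strategy mirrors the paper's: both expand the variance as a double sum of covariances, run two successive truncated explorations, and split into a same-cluster contribution (pairs with $w\in T_v$) and a distinct-cluster contribution handled by a coupling in the spirit of Section~\ref{sec:coup}. Your treatment of the diagonal and of the same-cluster contribution is correct and matches the $\P(\Theta_v)\,c_n$ term in the paper's estimate~\eqref{eq:MsqEst}, yielding the first term $Cc_n/n$.

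The gap is in the distinct-cluster bound, and you correctly identify it as the delicate step, but the fix you propose does not close it. You state that the worst-case per-pair interaction probability is $c_n^2/(\eps n)$ and that you need the average over pairs to be $c_n^2/(\eps n^2)$, i.e.\ an improvement by a factor of $n$. The mechanism you invoke --- that the typical edge probability in the damaged graph is $1/n$ rather than the worst-case $1/(\eps n)$ --- only supplies a factor of $\eps$, which is of a completely different order than $1/n$; this cannot produce the required gain. Furthermore, the per-pair bound $c_n^2/(\eps n)$ you quote is itself too optimistic: during the second exploration the relevant edge probabilities are conditional on explored indegrees that can be as large as $c_n$, so the appropriate per-edge bound is $\P^{c_n}(\triangle\Z[\fst+1,\fst+1]=1)=f(c_n)/(\fst+1)=O(c_n/(\eps n))$ rather than $f(0)/(\eps n)$, and a naive union bound over the $c_n^2$ pairs in $T_v\times T_w$ then gives $O(c_n^3/(\eps n))$ per pair, not $c_n^2/(\eps n)$. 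What actually saves the argument in the paper is that one does not bound the interaction probability pair by pair at all: estimate~\eqref{eq:MsqEst}, taken from Proposition~7.1 of \cite{DerMoe13}, bounds $\sum_w\P(\Theta_v,A_w)$ directly, with the interaction contribution already summed over $w$ equal to $\P(\Theta_v)\cdot C'c_n\P^{c_n}(\triangle\Z[\fst+1,\fst+1]=1)=\P(\Theta_v)\cdot O(c_n^2/(\eps n))$. This collective bound is the genuine content of the step and would need to be derived (by tracking the expected number of interacting $w$'s for a fixed $T_v$, not by averaging a per-pair estimate); without it your argument does not reach the stated $Cc_n^2/(\eps n^2)$.
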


The proof  is almost identical to the proof of Proposition~7.1 in \cite{DerMoe13}. The necessary changes are similar to the changes made for the proofs of 
Proposition~\ref{pro:graph_tree} and \ref{pro:graph_IBP}. We sketch only the main steps.

\begin{proof}[Proof sketch]
Write
\begin{equation}\label{eq:varExpan}
\begin{split}
&{\rm{Var}}\Big(\frac{1}{(1-\eps)n} \sum_{v=\fst+1}^n \mathbbm{1}\{|\connect_{n,p}^{\eps}(v)|\ge c_n\}\Big)\\
&= \frac{1}{(1-\eps)^2 n^2} \sum_{v,w= \fst+1}^n\!\!\Big( \P(|\connect_{n,p}^{\eps}(v)|\ge c_n, |\connect_{n,p}^{\eps}(w)| \ge c_n) -\P(|\connect_{n,p}^{\eps}(v)|\ge c_n)\P(|\connect_{n,p}^{\eps}(w)|\ge c_n)\Big).
\end{split}
\end{equation}
To estimate the probability $\P(|\connect_{n,p}^{\eps}(v)|\ge c_n, |\connect_{n,p}^{\eps}(w)| \ge c_n)$, we run two successive explorations in the graph $\grapheps_n(p)$, the first starting from $v$ and the second starting from $w$. For these explorations, we use the exploration process described below Proposition \ref{pro:graph_tree} but in every step only neighbours in the set of veiled vertices are explored. The first exploration is terminated as soon as either the number of dead and active vertices exceeds $c_n$ or there are no active vertices left. The second exploration, additionally, stops when a vertex is found which was already unveiled in the first exploration. We denote
\[
\Theta_v := \{\text{the first exploration started in vertex }v \text{ stops because }c_n \text{ vertices are found}\}.
\]
Then, for any $v \in \vseteps_n$,  $\P(|\connect_{n,p}^{\eps}(v)|\ge c_n)= \P(\Theta_v)$ and in the proof of Proposition~7.1 of \cite{DerMoe13} it was shown that there exists a constant $C'>0$, independent of $v$ and $n$, such that
\begin{equation}\label{eq:MsqEst}
\begin{split}
\sum_{w=\fst+1}^n &\P\big(|\connect_{n,p}^{\eps}(v)|\ge c_n, |\connect_{n,p}^{\eps}(w)| \ge c_n\big)\\
&\phantom{\P(|\connect_{n,p}^{\eps}(v))}\le \P(\Theta_v) \Big(c_n +\sum_{w=\fst+1}^n \P(\Theta_w) +C'c_n \P^{c_n}(\triangle \Z[\fst+1,\fst+1]=1)\Big).
\end{split}
\end{equation}
Combining \eqref{eq:varExpan} and \eqref{eq:MsqEst} and using \eqref{triangle_bound}, we deduce that there exists a constant $C>0$ with
\[
{\rm{Var}}(M_n^{\eps}(c_n))\le\frac{1}{(1-\eps)^2 n^2} \sum_{v= \fst+1}^n \P(\Theta_v)\Big(c_n + C' \frac{c_nf(c_n)}{\eps n}\Big) \le \frac{C}{n}\Big(c_n + \frac{c_n^2}{\eps n}\Big).\qedhere
\]
\end{proof}

\begin{proof}[Proof of Lemma~\ref{lem:Mconv}]
Using Chebyshev's inequality, Corollary~\ref{mean_conv} and Lemma~\ref{variance_lem} yield the claim.\end{proof}

Lemma~\ref{lem:Mconv} already implies that the asymptotic relative size of a largest component in the network is bounded from above by $\zeta^{\eps}(p)$. To show that the survival probability also constitutes a lower bound, we use the following sprinkling argument.

\begin{lemma}\label{sprinkling}
Let $\eps \in [0,1)$, $p \in (0,1]$, $\delta \in (0,f(0))$ and define $\underline{f}(k):=f(k)-\delta$ for all $k \in \N_0$. Denote by $\underline{\connect}_{n,p}^{\eps}(v)$ the connected component containing $v$ in the network $\underline{\graph}_n^{\eps}(p)$ constructed with the attachment rule~$\underline{f}$. Let $\kappa >0$ and $(c_n \colon n \in \N)$ be a sequence with
\[
\lim_{n \to \infty}[\tfrac{1}{2} \kappa (1-\eps) p \delta c_n -\log n]=\infty \quad \text{and}\quad \lim_{n \to \infty} c_n^2/n=0.
\]
Suppose that
\[
\frac{1}{n-\fst} \sum_{v =\fst+1}^n \mathbbm{1}\{|\underline{\connect}_{n,p}^{\eps}(v)|\ge 2c_n\} \ge \kappa \qquad \text{with high probability}.
\]
Then there exists a coupling of the networks $(\grapheps_n(p))_{n}$ and $(\underline{\graph}_{n}^{\eps}(p))_{n}$ such that $\underline{\graph}_n^{\eps}(p) \le \grapheps_n(p)$ and, with high probability, all connected components in $\underline{\graph}_n^{\eps}(p)$ with at least $2c_n$ vertices belong to one connected component in $\grapheps_n(p)$.
\end{lemma}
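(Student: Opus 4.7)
The plan is to realise both $\grapheps_n(p)$ and $\underline{\graph}_n^{\eps}(p)$ on a single probability space via a dynamic monotone coupling driven by common uniform thresholds, to extract from the constant gap $f-\underline{f}\equiv\delta$ an independent Bernoulli family dominated by the ``extra'' edges present in $\graph_n$ but not in $\underline{\graph}_n$, and finally to use these extras to merge the large components of $\underline{\graph}_n^{\eps}(p)$ by a union bound.

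For the coupling I would assign to every ordered pair $v<w$ a single uniform variable $U_{w,v}$ and to each vertex a single Bernoulli$(p)$ retention variable; I declare $(w,v)$ present in $\underline{\graph}$ iff $U_{w,v}\le\underline{f}(\underline{\Z}[v,w-1])/(w-1)$ and present in $\graph$ iff $U_{w,v}\le f(\Z[v,w-1])/(w-1)$. A one-step induction on $w$, using that $f$ is non-decreasing and $\underline{f}\le f$, gives $\underline{\Z}[v,w-1]\le\Z[v,w-1]$ for all $v<w$, hence $\underline{\graph}_n\subseteq\graph_n$, and the common percolation then produces $\underline{\graph}_n^{\eps}(p)\le\grapheps_n(p)$.

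Next, the pathwise identity $f(\Z[v,w-1])-\underline{f}(\underline{\Z}[v,w-1])\ge\delta$ together with the fact that the edges into $w$ are conditionally independent given the natural filtration $\mathcal{F}_{w-1}$ allows me to extract, inductively along $(\mathcal{F}_w)_w$, a family of independent Bernoulli-$\delta/(w-1)$ variables $V_{w,v}$ (one per pair of retained $v<w$) such that $\{V_{w,v}=1\}$ implies that $(w,v)$ is an extra edge in $\grapheps_n(p)$. Let $\mathcal{K}$ denote the family of components of $\underline{\graph}_n^{\eps}(p)$ of size $\ge 2c_n$ and $U=\bigcup\mathcal{K}$; on the high-probability event of the hypothesis $|U|\ge\kappa(1-\eps)n$. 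For a fixed $\connect^*\in\mathcal{K}$, a short case split on whether $|\connect^*|\lessgtr\kappa(1-\eps)n/2$ yields $|\connect^*|\cdot|U\setminus\connect^*|\gtrsim\kappa(1-\eps)\,c_n\,n$, so that using $1/(w-1)\ge 1/n$ the conditional probability that no $V$-edge joins $\connect^*$ to $U\setminus\connect^*$ is at most $\exp(-\delta\kappa(1-\eps)c_n/2)$; a union bound over the at most $n$ choices of $\connect^*$ then yields the analogue for every large component simultaneously, with failure probability tending to zero by the hypothesis on $c_n$.

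The main obstacle is bridging the gap between the natural conclusion of the previous step, namely ``every large component is $V$-connected to the union of the other large components'', and the lemma's stronger statement that all large components lie in one $\grapheps_n(p)$-component. A naive union bound over all $2^{|\mathcal{K}|}$ bipartitions of $\mathcal{K}$ is too wasteful since $c_n^2/n\to 0$, so I would handle this via a case split on the maximum component size: if some $\connect^{(1)}\in\mathcal{K}$ has $|\connect^{(1)}|\ge\kappa(1-\eps)n/2$ then the same calculation, applied pair-by-pair to $(\connect^{(1)},\connect^j)$, shows that $\connect^{(1)}$ is a hub connecting all of $\mathcal{K}$; otherwise one iterates the estimate at the level of meta-components, each of which, being a disjoint union of large components, still has size $\ge 2c_n$ and complement of linear size in $U$. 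The independence claim also deserves care because conditioning on the full $\underline{\graph}_n^{\eps}(p)$ reveals $\underline{\Z}$ but only a coarse functional of $\Z$; this is handled cleanly by constructing the $V_{w,v}$ recursively along $(\mathcal{F}_w)_w$ and invoking the tower property.
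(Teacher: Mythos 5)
The coupling and extraction steps of your plan are sound and are the standard sprinkling set-up (the paper itself delegates the proof to Proposition~4.1 of [DerMoe13], which proceeds along these lines): building $\underline{\graph}_n$ and $\graph_n$ from the same uniforms and retention indicators, one gets $\underline{\graph}_n^{\eps}(p)\le\grapheps_n(p)$ by induction, and since distinct large components of $\underline{\graph}_n^{\eps}(p)$ have no $\underline{\graph}$-edges between them, for each cross pair $v<w$ of retained vertices the conditional probability given $\underline{\graph}_n^{\eps}(p)$ that $U_{w,v}$ lands in the slab of width $\delta/(w-1)$ just above $\underline{f}(\underline{\Z}[v,w-1])/(w-1)$ is still at least $\delta/(w-1)$, and these events are conditionally independent. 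Your per-component estimate $\exp(-\delta\kappa(1-\eps)c_n)$ is also correct.

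The genuine gap is exactly where you flag it, and your proposed fix does not close it. Case~1 (a hub of size $\ge\kappa(1-\eps)n/2$) is fine, but Case~2 — "iterate at the level of meta-components" — is not sound as stated: the meta-components after a merging round are determined by the very $V$-edges you would need to reuse, so conditioning on the round-one outcome biases the remaining $V$-variables (in particular, many of the cross-pairs will already be revealed to be $0$). The natural repair, reserving disjoint sub-Bernoulli families for each of the $\Theta(\log(n/c_n))$ rounds, dilutes $\delta$ by a $\log n$ factor and would require $c_n\gtrsim(\log n)^2$, which is strictly more than the hypothesis $\frac12\kappa(1-\eps)p\delta c_n-\log n\to\infty$ gives. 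The correct one-shot argument is a union bound over \emph{bipartitions of the collection $\mathcal{K}$ of large components}, parameterized by the number $k\le|\mathcal{K}|/2$ of components on the smaller side: if $\sigma:=\sum_{\connect\in S}|\connect|$ and $\tau:=|U|-\sigma$, then $\sigma\ge 2kc_n$ and (since $2kc_n\le|U|/2$) one has $\sigma\tau\ge kc_n|U|\ge\kappa(1-\eps)kc_n n$, so the probability that no $V$-edge crosses is at most $\exp(-\delta\kappa(1-\eps)c_nk)$; summing against $\binom{|\mathcal{K}|}{k}\le n^k$ choices of $S$ gives $\sum_{k\ge1}\bigl[n\exp(-\delta\kappa(1-\eps)c_n)\bigr]^k\to0$, precisely because the hypothesis forces $\delta\kappa(1-\eps)c_n-\log n\to\infty$. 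This replaces both your case split and the iteration, and is where the growth condition on $c_n$ actually does its work.
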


Lemma~\ref{sprinkling} in the case $\eps=0$ and $p=1$ is Proposition~4.1 in \cite{DerMoe13}. The proof is valid for $\eps \in [0,1)$, $p \in (0,1]$ up to 
obvious changes and is therefore omitted.

\begin{proof}[Proof of Theorem~\ref{thm:size_giant}]
Choose $c_n=(\log n)^2$. By Lemma~\ref{lem:Mconv}, we have in probability
\[
\limsup_{n \to \infty} \frac{|\connect_{n,p}^{\eps}|}{(1-\eps)n} \le \limsup_{n\to \infty} \max\Big\{\frac{c_n}{(1-\eps)n}, M_{n,p}^{\eps}(c_n)\Big\}\le \zeta^{\eps}(p).
\]
Moreover, for $\delta \in (0,f(0))$, another application of Lemma~\ref{lem:Mconv} implies that $M_{n,p}^{\eps}(2c_n,f-\delta)$ converges to $\zeta^{\eps}(p,f-\delta)$ in probability. Hence, Lemma~\ref{continuityZeta} and Lemma~\ref{sprinkling} imply that for all $\delta'>0$, $|\connect_{n,p}^{\eps}|\ge (n-\fst)  (\zeta^{\eps}(p)-\delta')$ with high probability. This concludes the proof.
\end{proof}

\section{Variations and other models}

We study the preferential attachment network with a non-linear attachment rule in Section \ref{sec:Genf}, and inhomogeneous random graphs and the configuration model in Sections~\ref{sec:IRRG} and~\ref{sec:CM}, respectively.

\subsection{Proof of Theorem~\ref{thm:genf}}\label{sec:Genf}

Theorem~\ref{thm:genf} is an immediate consequence of Theorem~\ref{thm:pc} and a stochastic domination result on the level of the networks. We make use of the notation and terminology introduced in Section~\ref{sec:nonlin_result}.

\begin{proof}[Proof of Theorem~\ref{thm:genf}]
First suppose that $f$ is a L-class attachment rule with $\overline{f}\ge f \ge \underline{f}$ for two affine\vspace{-1mm}\linebreak attachment rules given by
$\overline{f}(k)=\gamma k +\beta_u$ and $\underline{f}(k)=\gamma k +\beta_l$ for $k\in\N_0.$ There exists a natural coupling of the networks generated
by these attachment rules such that 
\[
\overline{\graph}_n \ge \graph_n \ge \underline{\graph}_n \quad \mbox{ for all $n \in \N$.}
\] 
This ordering is retained after percolation and implies and ordering $\overline{p}_{\rm c }(\eps) \le \pc(\eps) \le \underline{p}_{\rm c }(\eps)$ of the critical\vspace{-1mm}\linebreak percolation parameters
of the networks. Applying Theorem~\ref{thm:pc} to $\overline{f}$ and $\underline{f}$, we obtain positive constants $C_1,\ldots,C_4$ such that, for small $\eps \in (0,1)$,
\[
\frac{C_1}{\log(1/\eps)}\le \pc(\eps) \le \frac{C_2}{\log(1/\eps) } \mbox{ if }\gamma=\sfrac{1}{2}, \quad C_3 \eps^{\gamma-1/2}\le \pc(\eps) \le C_4 \eps^{\gamma-1/2}\mbox{ if }\gamma>\sfrac{1}{2},
\]
and the result follows.\smallskip

Now let $f$ be a C-class attachment rule. Concavity of $f$ implies that the increments $\gamma_k:=f(k+1)-f(k)$, for $k \in \N_0$, form a non-increasing sequence converging to~$\gamma$. In particular, with $\underline{f}(k):=\gamma k+f(0)$,\vspace{-1mm}\linebreak we get $f(k)=\sum_{l=0}^{k-1}\gamma_l +f(0)\ge  \gamma k +f(0)=\underline{f}(k)$, for all $k \in \N_0$. To obtain a corresponding upper bound, let $\beta_j:=f(j)-\gamma_j j$. Then $\beta_j >0$ and for all $k \in \N_0$,  
\[
f(k)-\beta_j=f(k)-f(j)+\gamma_j j=\left\{\begin{array}{rr}
-\sum_{l=k}^{j-1}\gamma_l +\gamma_j j \le -(j-k) \gamma_j+\gamma_j j& \text{ if } k \le j\\
\sum_{l=j}^{k-1}\gamma_l +\gamma_j j \le (k-j) \gamma_j+\gamma_j j& \text{ if } k \ge j\end{array}\right\}=\gamma_j k.
\]
Hence, the attachment rule given by $\overline{f}_j(k):=\gamma_j k +\beta_j$, for $k\in\N_0$, satisfies $\overline{f}_j\ge f$, and we can use the same coupling as in the first part of the proof  
to obtain 
\[
\overline{p}_{\rm c }^{(j)}(\eps) \le \pc(\eps) \le \underline{p}_{\rm c }(\eps),
\]
where $\overline{p}_{\rm c }^{(j)}(\eps)$ corresponds to the network with attachment rule~$\overline{f_j}$. Since $\gamma_j \downarrow \gamma$, we have $\gamma_j \in [\sfrac{1}{2},1)$ for large $j$. 
Theorem~\ref{thm:pc} yields, for $\gamma>\sfrac{1}{2}$, constants $C,C_j >0$ such that 
\[
\log C_j +(\gamma_j -1/2)\log \eps \le \log \pc(\eps) \le \log C+(\gamma-1/2) \log \eps.
\]
Dividing by $\log \eps$ and then taking first $\eps \downarrow 0$ and then $j\to \infty$ yields the claim for $\gamma >\sfrac{1}{2}$. In case $\gamma =\frac{1}{2}$ it could happen that $\gamma_j>\frac{1}{2}$ for all $j \in \N$. In this situation, Theorem~\ref{thm:pc} does not give a bound on the right scale. Therefore, we can use only the upper bound on $\pc(\eps)$ which gives the stated result.
\end{proof} 

\subsection{Other models}\label{sec:others}

In this section, we study vulnerability of two other classes of robust network models. 

\subsubsection{Configuration model} \label{sec:CM}

The configuration model is a natural way to construct a network with given degree sequence. Its close connection to the uniformly chosen simple graph with given degree sequence is explained in Section~7.4 in \cite{Hof13pre}. Existence of a giant component in the configuration model has been studied by Molloy and Reed~\cite{MolRee95} and Janson and Luczak~\cite{JanLuc08,Jan09}. 
Recall from Section~\ref{sec:CM_results} that we write $D$ for the weak limit of the degree of a uniformly chosen vertex. Janson and Luczak~\cite{JanLuc08} showed that if \eqref{as:CM} holds and $\P(D=2)<1$, then
\[
(\graph_n^{\sss (\CM)} \colon n \in \N) \text{ has a giant component } \Leftrightarrow \; \E[D (D-1)]>\E D.
\]
Janson \cite{Jan09} found a simple construction that allows to obtain a corresponding result for the network after random or deterministic removal of vertices (or edges), where the retention probability of a vertex can depend on its degree. Let $\boldsymbol{\pi}=(\pi_k)_{k \in \N}$ be a sequence of retention probabilities with $\pi_k \P(D=k)>0$ for some $k$. Every vertex $i$ is removed with probability $1-\pi_{d_i}$ and kept with probability $\pi_{d_i}$, independently of all other vertices. Janson describes the network after percolation as follows \cite[page 90]{Jan09}: For each vertex $i$ replace it with probability $1-\pi_{d_i}$ by $d_i$ new vertices of degree $1$. Then construct the configuration model $\graph_n^{\sss (\CM), \boldsymbol{\pi}}$ corresponding to the new degree sequence and larger number of vertices and remove from this graph uniformly at random vertices of degree $1$ until the correct number of vertices for $\graph_n^{\sss (\CM)}$ after percolation is reached. The removal of these surplus vertices cannot destroy or split the giant component since the vertices are of degree $1$. Hence, it suffices to study the existence of nonexistence of a giant component in $\graph_n^{\sss (\CM), \boldsymbol{\pi}}$.
\smallskip

To construct $\graph_n^{{\sss (\CM)},\eps}(p)$, we remove the $\fst$ vertices with the largest degree from $\graph_n^{\sss (\CM)}$ and then run 
vertex percolation with retention probability $p$ on the remaining graph. In general, this does not fit exactly into the setup of Janson. To emulate the behaviour, we denote by $n_j$ the number of vertices with degree~$j$ in the graph and let 
$K_n=\inf\{ k \in \N_0\colon \sum_{j=k+1}^{\infty} n_j \le \fst\}$. Then all vertices with degree larger than $K_n$ are deterministically removed in $\graph_n^{{\sss (\CM)},\eps}(p)$, i.e.\ $\pi_j=0$ for $j \ge K_n+1$. In addition, we deterministically remove $\fst-\sum_{j=K_n+1}^{\infty} n_j$ vertices of degree $K_n$, while all other vertices are subject to vertex percolation with retention probability $p$. In particular, $\pi_j=p$ for $j \le K_n-1$.
Denote by $F(x):=\P(D\le x)$, for $x \ge 0$, and by $[1-F]^{-1}$ the generalized inverse of $[1-F]$, that is
$[1-F]^{-1}(u)=\inf\{k \in \N_0\colon [1-F](k) \le u\}$ for all $u \in (0,1)$.
One easily checks that $K_n \in \{m,m+1\}$ for all large $n$, where $m:=[1-F]^{-1}(\eps)$. Using this observation, it is not difficult to adapt Janson's proof to show that
\[
(\graph_n^{{\sss(\CM)},\eps}(p)\colon n \in \N) \text{ has a giant component } \Leftrightarrow \; p>\pc(\eps),
\]
where
\[
\pc(\eps):= \frac{\E D}{ \E[D(D-1)\mathbbm{1}_{D \le m}] - m (m-1)(\eps- [1-F](m))}.
\]

\begin{proof}[Proof of Theorem~\ref{thm:CM}]
Let $U$ be a uniformly distributed random variable on $(0,1)$. Then $[1-F]^{-1}(U)$ has the same distribution as $D$ and
\begin{align*}
\E[D(D-1)\mathbbm{1}_{\{D \le m\}}] - m (m-1)(\eps- [1-F](m))&= \E\big[[1-F]^{-1}(U) \big([1-F]^{-1}(U)-1\big) \mathbbm{1}_{\{U \ge \eps\}}\big]\\
&\asymp \E\big[[1-F]^{-1}(U)^2 \mathbbm{1}_{\{U \ge \eps\}}\big].
\end{align*}
The assumption $[1-F](k) \sim C k^{-1/\gamma}$ as $k \to \infty$ implies that $[1-F]^{-1}(u) \sim C^{\gamma} u^{-\gamma}$ as $u \downarrow 0$. Let $u_0>0$ such that
\[
\frac{1}{2} \le \frac{[1-F]^{-1}(u)}{C^{\gamma} u^{-\gamma} } \le \frac{3}{2} \qquad \mbox{ for all } u \le u_0.
\]
Since $[1-F]^{-1}(u)^2$ is not integrable around zero, but bounded on $[u_0,1)$, we deduce
\begin{align*}
\E\big[[1-F]^{-1}(U)^2 \mathbbm{1}_{\{U \ge \eps\}}\big]&= \int_{\eps}^1 [1-F]^{-1}(u)^2 \, du \asymp \int_{\eps}^{u_0} [1-F]^{-1}(u)^2 \, du\\
& \asymp \int_{\eps}^{u_0} u^{-2\gamma} \, du\asymp\begin{cases}
\log(1/\eps) & \text{if }\gamma=\frac{1}{2},\\
\eps^{1-2\gamma} & \text{if }\gamma >\frac{1}{2}.
\end{cases}\qedhere
\end{align*}
\end{proof}

\subsubsection{Inhomogeneous random graphs} \label{sec:IRRG}
The classical Erd\H{o}s-R\'{e}nyi random graph can be generalized by giving each vertex a weight and choosing the probability for an edge between two vertices as an increasing function of their weights. Suppose that $\kappa \colon (0,1]\times (0,1] \to (0,\infty)$ is a symmetric, continuous kernel with
\begin{equation}\label{eq:IRRG_cond}
\int_0^1 \int_0^1 \kappa(x,y) \, dx \, dy<\infty
\end{equation}
and recall from \eqref{def:IRRG} that in the inhomogeneous random graph $\graph_n^{\sss (\kappa)}$, the edge $\{i,j\}$ is present with probability $\frac{1}{n} \kappa(\frac{i}{n},\frac{j}{n}) \land 1$, independently of all other edges. We assume that vertices are ordered in decreasing order of privilege, i.e.\ $\kappa$ is non-increasing in both components.
Bollob{\'a}s et al.\ show in Theorem~3.1 and Example~4.11 of \cite{BolJanRio07} that 
\begin{equation}\label{eq:IRG_gen}
(\graph_n^{{\sss (\kappa)},\eps}(p)\colon n \in \N) \text{ has a giant component } \Leftrightarrow \; p >\pc(\eps):=\|T_{\kappa}\|_{L^2(\eps,1)}^{-1}, 
\end{equation}
where
\[
T_{\kappa}g(x)=\int_{\eps}^1 \kappa(x,y)g(y) \, dy, \qquad \mbox{ for all } x \in (\eps,1),
\]
for all measurable functions $g$ such that the integral is well-defined, and $\|\cdot\|_{L^2(\eps,1)}$ denotes the operator norm on the $L^2$-space with respect to the Lebesgue measure 
on $(\eps,1)$. The same result holds for a version of the Norros-Reittu model in which edges between different vertex pairs are independent and edge $\{i,j\}$ is present with probability $1-e^{-\kappa(i/n,j/n)/n}$ for all $i,j \in \{1,\ldots,n\}$. Consequently, the estimates given in Theorem~\ref{thm:IRG} hold for this model, too. 

\begin{proof}[Proof of Theorem~\ref{thm:IRG}]
Since $\kappa^{\sss (\CL)}$ and $\kappa^{\sss (\PA)}$ are positive, symmetric, continuous kernels satisfying \eqref{eq:IRRG_cond}, the first part of the theorem follows immediately from \eqref{eq:IRG_gen}. By definition 
\[
\|T_{\kappa}\|_{L^2(\eps,1)}=\sup\big\{\|T_{\kappa}g\|_{L^2(\eps,1)}\colon \|g\|_{L^2(\eps,1)}\le 1\big\}.
\]
For a rank one kernel $\kappa(x,y)=\psi(x) \psi(y)$, the operator norm of $T_{\kappa}$ is attained at $\psi/\|\psi\|_{L^2(\eps,1)}$ with $\|T_{\kappa}\|_{L^2(\eps,1)}=\|\psi\|_{L^2(\eps,1)}^2$. Hence,
\[
\|T_{\kappa^{\sss(\CL)}}\|_{L^2(\eps,1)}= \int_{\eps}^1 x^{-2\gamma} \, dx=\begin{cases}
\log( 1/\eps) & \text{if }\gamma =1/2,\\
\frac{1}{1-2\gamma} \big[1-\eps^{1-2\gamma}\big] & \text{if } \gamma \not =\frac{1}{2}, 
\end{cases}
\]
and
\[
\pc^{\sss (\CL)}(\eps)=\begin{cases}
(1-2\gamma)\frac{1}{1-\eps^{1-2\gamma}} & \text{if }\gamma<\frac{1}{2},\\
\frac{1}{\log(1/\eps)} & \text{if }\gamma=\frac{1}{2},\\
(2\gamma -1) \eps^{2\gamma-1} \frac{1}{1-\eps^{2\gamma-1}} & \text{if } \gamma>\frac{1}{2}.
\end{cases}
\]
Now suppose that $\gamma>1/2$. By Cauchy-Schwarz's inequality and the symmetry of $\kappa^{\sss(\PA)}$,
\begin{align*}
\|T_{\kappa^{\sss(\PA)}}\|_{L^2(\eps,1)}^2&\le \int_{\eps}^1 \int_{\eps}^1 \kappa^{\sss (\PA)}(x,y)^2 \, dy \,dx= 2 \int_{\eps}^1 \int_{\eps}^x x^{2(\gamma-1)} y^{-2\gamma} \, dy \,dx\\
& \le 2 \int_{0}^1 x^{2(\gamma-1)} \, dx \int_{\eps}^{\infty} y^{-2\gamma} \, dy = \frac{2}{(2\gamma-1)^2} \eps^{1-2\gamma}.
\end{align*}
For the lower bound, let $c_{\eps}= \sqrt{2\gamma-1} \eps^{\gamma-1/2}$ and $g(x)=c_{\eps} x^{-\gamma}$. Then $\|g\|_{L^2(\eps,1)}\le 1$ and 
\begin{align*}
\|T_{\kappa^{\sss (\PA)}}\||_{L^2(\eps,1)}^2 &\ge \|T_{\kappa^{\sss (\PA)}}g\||_{L^2(\eps,1)}^2 \ge \int_{\eps}^1 \Big(\int_{\eps}^x \kappa^{\sss (\PA)}(x,y) g(y) \, dy\Big)^2 \, dx \\
&= \frac{c_{\eps}^2}{(2\gamma-1)^2} \int_{\eps}^1x^{2(\gamma-1)} [\eps^{1-2\gamma}-x^{1-2\gamma}]^2 \, dx\\
&\ge \frac{c_{\eps}^2}{(2\gamma-1)^2} \int_{\eps}^1x^{2(\gamma-1)} \big[\eps^{2(1-2\gamma)} - 2\eps^{1-2\gamma}x^{1-2\gamma} \big] \, dx\\
&= \frac{\eps^{1-2\gamma}}{(2\gamma-1)^2} \Big[1-\eps^{2\gamma-1} + 2(2\gamma-1) \eps^{2\gamma-1}\log \eps \Big].
\end{align*}
The claim follows. 
\end{proof}

\end{document}